\newcommand{\RR}{{\mathbb R}}
\newcommand{\N}{\mathbb{N}}
\renewcommand{\P}{\mathbb{P}}
\newcommand{\td}[1]{\tilde{#1}}
\newcommand{\co}[1]{{\mathbf{co}}\left(#1\right)}
\newcommand{\proj}[1]{{\mathbf{proj}}\left(#1\right)}
\newcommand{\wt}[1]{\widetilde{#1}}
\newcommand{\cl}[1]{\mathbf{cl}\left(#1\right)}
\newcommand{\aff}[1]{\mathbf{aff}(#1)}
\newcommand{\cal}[1]{\mathcal{#1}}
\newcommand{\tb}{\text{\upshape TH}}
\newcommand{\tbg}[1]{\wt{\tb}_{#1}(\wt{G})}
\newcommand{\og}[1]{\Omega_{#1}(\wt{G})}
\newcommand{\So}{\wt{S}^{\text{o}}}
\newcommand{\Sc}{\wt{S}^{\text{c}}}
\newcommand{\ri}[1]{\mathbf{ri}\left(#1\right)}
\newcommand{\Qk}{\mathcal{Q}_k}
\newtheorem{theorem}{Theorem}[subsection]
\newtheorem{prop}[theorem]{Proposition}
\newtheorem{lemma}[theorem]{Lemma}
\newtheorem{cor}[theorem]{Corollary}
\newtheorem{ass}[theorem]{Assumption}
\theoremstyle{definition}
\newtheorem{example}[theorem]{Example}
\newtheorem{remark}[theorem]{Remark}
\newtheorem{define}[theorem]{Definition}
\numberwithin{equation}{section}
\begin{document}
\title[Semidefinite representations of  non-compact convex sets]
{Semidefinite representations of non-compact convex sets}
\author{Feng Guo}
\address{School of Mathematical Sciences, Dalian  University of Technology,
Dalian, 116024, China}
\email{fguo@dlut.edu.cn}
\author{Chu Wang}
\address{KLMM, Academy of Mathematics and Systems Science, CAS,
Beijing, 100190, China}
\email{cwang@mmrc.iss.ac.cn}
\author{Lihong Zhi}
\address{KLMM, Academy of Mathematics and Systems Science, CAS,
Beijing, 100190, China}
\email{lzhi@mmrc.iss.ac.cn}

\begin{abstract}
We consider the problem of the  semidefinite representation of a class of
non-compact basic semialgebraic sets. We introduce the conditions of
pointedness and closedness at infinity of a semialgebraic set and show that
under these conditions our modified
hierarchies of nested theta bodies and Lasserre's relaxations
converge to the closure of the convex hull of $S$.  Moreover, if the PP-BDR
property is satisfied, our theta body and Lasserre's relaxation are
exact when the order is large enough;
if the PP-BDR property does not hold, our hierarchies convergent uniformly to
the closure of the convex hull of $S$ restricted to every fixed ball centered at
the origin.
We illustrate  through a set of examples that  the conditions of pointedness and closedness are
essential  to ensure the convergence.
Finally, we provide some strategies to  deal with cases where the conditions of
pointedness and closedness are violated.
\end{abstract}
\date{\today}
\maketitle

\noindent {\bf Key words.} Convex sets,
semidefinite representation,
theta bodies,
sums of squares, moment matrices.


\section{Introduction}

Consider the  basis semialgebraic set
\begin{equation*}
S:=\{x\in\RR^n\mid g_1(x)\ge 0, \ldots, g_m(x)\ge 0\},
\end{equation*}
where $g_i(X)\in\RR[X]:=\RR[X_1,\ldots,X_n],\ i=1,\ldots,m$. The convex hull of
$S$ is denoted by $\co{S}$ and its closure is denoted by $\cl{\co{S}}$.
Characterizing $\cl{\co{S}}$ is an important issue raised in
\cite{LecturesConOpt,LMIRS,SRCSHN,ParriloTalk}.
There is  a considerable amount of  interesting work by many people. For instance,
 using the same variables
appearing in $S$, Rostalski and Sturmfels
\cite{dualities} exploit projective varieties to explicitly find the polynomials
that describe the boundary of $\co{S}$ when $S$ is a compact real algebraic
variety;  by introducing more variables,
 theta bodies   \cite{thetabody} and  Lasserre's relaxations
\cite{convexsetLasserre} have been given to compute   $\cl{\co{S}}$
approximately or exactly when $S$ is a
compact semialgebraic set.
In this paper, we aim  to extend works in  \cite{thetabody,  convexsetLasserre}
and provide sufficient  conditions such
that the modified hierarchy of  theta bodies and Lasserre's relaxations of
non-compact semialgebraic set $S$  can still converge to the closure of the
convex hull of $S$.

Let $\td{g}_1, \ldots, \td{g}_m$ be homogenized polynomials   of  $g_1, \ldots,
g_m$ respectively.  We lift the cone of  ${S}$ to a cone of $\wt{S}^{\text{o}}
\in \RR^{n+1}$
\[
\wt{S}^{\text{o}}:=\{\td{x}\in\RR^{n+1}\mid \td{g}_1(\td{x})\ge 0,\ \ldots,\
\td{g}_m(\td{x})\ge 0,\ x_0>0\}.
\]
Let $\wt{X}:=(X_0, X_1,\ldots, X_n)$. Denote $\Qk(\wt{G})$
as the  $k$-th quadratic
module generated by
\[
\wt{G}:= \left\{\td{g}_1,\ldots,\td{g}_m,\ X_0,\ \Vert\wt{X}\Vert^2_2-1,\
1-\Vert\wt{X}\Vert^2_2\right\},
\]
and
\[
\wt{S}:=\left\{\td{x}\in\RR^{n+1}\mid \td{g}_1(\td{x})\ge 0,\ \ldots,\
\td{g}_m(\td{x})\ge 0,\ x_0\ge 0,\ \Vert \td{x}\Vert_2^2=1\right\}.
\]
Denote $\P[\wt{X}]_1:=(\RR[\wt{X}]_1\backslash\RR)\cup\{0\}$ where
$\RR[\wt{X}]_1$ is the set of linear polynomials in $\RR[\wt{X}]$.
We construct the hierarchy of theta bodies $\tbg{k}$
\[
\tbg{k}:=\left\{x\in\RR^n\mid\td{l}(1,x)\ge 0,\quad\forall\
\td{l}\in \Qk(\wt{G})\cap\P[\wt{X}]_1\right\},
\]
and Lasserre's relaxations $\og{k}$
\[
\Omega_k(\wt{G}):=\left\{
\begin{array}{c|c}
x\in\RR^n&
\begin{aligned}
&\exists y\in
\RR^{\td{s}(2k)},\ \text{s.t.}\
\mathscr{L}_y{(X_0)}=1,\\
&\mathscr{L}_y(X_i)=x_i,\ i=1,\ldots,n,\\
&M_{k-1}(X_0y)\succeq 0,\ M_{k-1}((\Vert\wt{X}\Vert_2^2-1)y)=0,\\
&M_k(y)\succeq 0,\ M_{k-k_j}(\td{g}_jy)\succeq 0,\ j=1,\ldots,m,\\
\end{aligned}
\end{array}
\right\},
\]
where
$\td{s}(k)={n+k+1\choose n+1}$ and $k_j=\lceil\deg{g_j}/2\rceil$, for every $k\in\N$.

\vspace{0.2cm}

\paragraph{\bf Our contribution:} Consider  a non-compact
basic semialgebraic set $S$.

\begin{itemize}

\item Assuming that $S$ is
closed at $\infty$ \cite{exactJacNie}  and  its homogenized cone
$\co{\cl{\wt{S}^{\text{o}}}}$ is closed and pointed
(equivalently, it contains no lines through the origin):

\begin{itemize}
\item
We  prove that the hierarchies of  $\tbg{k}$ and $\og{k}$ defined above
converge to $\cl{\co{S}}$  asymptotically.  If $\Qk(\wt{G})$ is closed, then
$\tbg{k}=\og{k}$ for $k \in \N$.

\item
If the  Putinar-Prestel's Bounded Degree Representation
(PP-BDR) \cite{convexsetLasserre} holds for $\wt{S}$ with
order $k'$, then we conclude that $\cl{\co{S}}=\tbg{k'}=\og{k'}$.
If PP-BDR property does not hold, then for every $\epsilon>0$, we show that
$\tbg{k}$ and $\og{k}$ convergent uniformly to $\cl{\co{S}}$ restricted to every
fixed ball centered at the origin.
\end{itemize}

\item
We show that the  conditions of closedness and pointedness are essential to
guarantee the convergence of the constructed hierarchies.
\begin{itemize}
\item We observe that the condition of closedness of $S$ at $\infty$ depends on
the generators of $S$ and in many cases, we can force $S$ to become closed
at $\infty$  by adding a redundant linear polynomial obtained by the
property of pointedness of $\co{\cl{\wt{S}^{\text{o}}}}$.

\item
If $\co{\cl{\wt{S}^{\text{o}}}}$ is not pointed, then we divide $S$ into $2^n$
parts along each axis.
If $S$ is closed at $\infty$ and each part satisfies PP-BDR property, we can compute
the theta bodies and Lasserre's relaxations for
each one and then glue them together
properly.
\end{itemize}
\end{itemize}

\vspace{0.2cm}
\paragraph{\bf Structure of the paper:}
We provide in Section \ref{preliminary}   some preliminaries about convex sets
and cones.  We also recall some  known results about theta bodies   \cite{thetabody}
and  Lasserre's relaxations \cite{convexsetLasserre} for compact semialgebraic
sets.  An example is given to show that  for a non-compact semialgebraic set
$S$, the sequence defined in (\ref{eq::Lasserre}) or (\ref{eq::tb}) does not
converge to $\cl{\co{S}}$.  In  Section \ref{maincontribution}, when  $S$ is a
non-compact semialgebraic set, we  provide sufficient conditions  for
guaranteeing  the convergence of modified
 Lasserre's relaxations and theta bodies for computing
 $\cl{\co{S}}$.  Some examples are also given to illustrate our method.
More discussions on these sufficient conditions are given
in Section \ref{sec::discussion}.

\section{Preliminaries}\label{preliminary}
In this section we present some  preliminaries needed in the rest of
this paper.

\subsection{Convex sets and cones} The symbol $\RR$ denotes the set of real
numbers. For $x\in\RR^n$, $\Vert x\Vert_2$ denotes the standard Euclidean norm
of $x$. A subset $C\in\RR^n$ is {\itshape convex} if for any $u, v\in C$ and any
$\theta$ with $0\le\theta\le 1$, we have $\theta u+(1-\theta)v\in C$. For any
subset $W\in\RR^n$, denote $\ri{W}$, $\cl{W}$ and $\co{W}$ as the relative
interior, closure and convex hull of $W$, respectively. A subset
$K\subseteq\RR^n$ is a {\itshape cone} if it is closed under positive scalar
multiplication. The {\itshape dual cone} of $K$ is
\[
K^*=\{c\in\RR^n\mid \langle c, x\rangle\ge 0,\quad \forall x\in K\}.
\]
In particular, $(\RR^n)^*=\RR^n$ and $L^*=L^\perp$ for any
subspace $L\in\RR^n$. A
cone $K$ need not be convex, but its dual cone $K^*$ is always convex and
closed. The second
dual $K^{**}$ is the closure of the convex hull of $K$.
Hence, if $K$ is a closed convex cone, then $K^{**}=K$.

\begin{prop}\label{prop::conedual}
Let $K_1$ and $K_2\subseteq\RR^n$ be two closed convex cone,  then
\begin{equation}\label{eq::conedual}
(K_1+K_2)^*=K_1^*\cap K_2^*\quad\text{and}\quad (K_1\cap
K_2)^*=\cl{K_1^*+K_2^*}.
\end{equation}
In particular, for any subspace $L\subseteq\RR^n$,
\[
(K_1\cap L)^*=\cl{K_1^*+L^\perp}.
\]
\end{prop}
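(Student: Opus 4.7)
The plan is to prove the two equalities in \eqref{eq::conedual} in order, using only the definition of the dual cone and the bipolar identity $K^{**}=K$ for closed convex cones (recalled just before the proposition). The particular case for a subspace will then drop out immediately since $L^*=L^\perp$.

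For the first equality $(K_1+K_2)^*=K_1^*\cap K_2^*$, I would argue by double inclusion. For $(\subseteq)$, take $c\in(K_1+K_2)^*$; since $0\in K_2$ (being a closed convex cone), every $x_1\in K_1$ can be written as $x_1+0\in K_1+K_2$, so $\langle c,x_1\rangle\ge 0$, giving $c\in K_1^*$, and symmetrically $c\in K_2^*$. For $(\supseteq)$, pick $c\in K_1^*\cap K_2^*$ and observe $\langle c,x_1+x_2\rangle=\langle c,x_1\rangle+\langle c,x_2\rangle\ge 0$ for every $x_1\in K_1, x_2\in K_2$. This is the easy half and involves no topology.

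For the second equality $(K_1\cap K_2)^*=\cl{K_1^*+K_2^*}$, the idea is to apply the first equality to the closed convex cones $K_1^*$ and $K_2^*$ and then dualize. Explicitly, from the first part
\[
(K_1^*+K_2^*)^*=K_1^{**}\cap K_2^{**}=K_1\cap K_2,
\]
where the second equality uses that $K_1,K_2$ are closed and convex. Taking duals once more,
\[
(K_1\cap K_2)^*=(K_1^*+K_2^*)^{**}=\cl{\co{K_1^*+K_2^*}}=\cl{K_1^*+K_2^*},
\]
the last equality because $K_1^*+K_2^*$ is already a convex cone (the sum of two convex cones is a convex cone). The only subtle point here is remembering that the bipolar of an arbitrary set is its closed convex conic hull, so that the closure cannot in general be dropped; this is the reason the statement is phrased with $\cl{\cdot}$.

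For the particular case, I would simply set $K_2=L$, a closed convex subspace, so that $L^*=L^\perp$, and then the second equality yields $(K_1\cap L)^*=\cl{K_1^*+L^\perp}$ at once. The main potential obstacle is purely bookkeeping: making sure all cones to which the first identity is applied are indeed closed and convex so that the bipolar step $K^{**}=K$ is justified; no serious analytic work is required.
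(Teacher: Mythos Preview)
Your proof is correct and follows essentially the same approach as the paper: the first identity by double inclusion, the second by applying the first to $K_1^*,K_2^*$ and invoking the bipolar theorem. The only cosmetic difference is that for the inclusion $(K_1+K_2)^*\subseteq K_1^*\cap K_2^*$ you use $0\in K_i$ directly, whereas the paper lets the coefficients $c_1,c_2>0$ tend to zero; your version is slightly cleaner but not materially different.
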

\begin{proof}
It is clear
that $K_1^*\cap K_2^*\subseteq (K_1+K_2)^*$. To prove the first equality, it is enough  to show that  $(K_1+K_2)^* \subseteq K_1^*\cap K_2^*$.
Let $l\in (K_1+K_2)^*$, then for any $x^{(1)}\in K_1,\
x^{(2)}\in K_2,\ c_1>0,\ c_2>0$, we have $\langle l,\
c_1x^{(1)}+c_2x^{(2)}\rangle\ge 0$. Let $c_1$ and $c_2$ tend to $0$
respectively, we can get $\langle l,\ x^{(1)}\rangle\ge 0$ and
$\langle l,\ x^{(2)}\rangle\ge 0$, i.e., $l\in K_1^*\cap K_2^*$.

Since $K_1$ and $K_2$ are closed, we have $K_1^{**}=K_1$ and $K_2^{**}=K_2$. By
the first equality in (\ref{eq::conedual}), we have
\[
(K_1\cap K_2)^*=(K_1^{**}\cap K_2^{**})^*=((K_1^*+K_2^*)^*)^*=\cl{K_1^*+K_2^*}.
\]
\end{proof}
\begin{theorem}\cite[Corollary 6.5.1]{convexanalysis}\label{th::convexclosure}
Let $C$ be a convex set and let $M$ be an affine set which contains a point of
$\ri{C}$. Then
\[
\ri{M\cap C}=M\cap \ri{C},\quad \cl{M\cap C}=M\cap \cl{C}.
\]
\end{theorem}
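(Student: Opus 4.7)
The plan is to establish the two equalities separately, using only the line-segment principle for relative interiors (if $y\in\ri{C}$ and $x\in\cl{C}$, then $(1-t)x+ty\in\ri{C}$ for all $t\in(0,1]$) and the neighborhood characterization $x\in\ri{C}$ iff some open neighborhood $U$ of $x$ satisfies $U\cap\aff{C}\subseteq C$.

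For the closure equality $\cl{M\cap C}=M\cap\cl{C}$, the inclusion $\subseteq$ is immediate: $M$ is an affine set hence closed, and $M\cap C\subseteq M\cap\cl{C}$, so passing to closures yields the result. For the reverse inclusion, I would pick $x\in M\cap\cl{C}$ and invoke the assumed point $y\in M\cap\ri{C}$. By the line-segment principle, $(1-t)x+ty\in\ri{C}$ for every $t\in(0,1]$, and this point also lies in $M$ because $M$ is affine and contains both endpoints. Letting $t\to 0^+$ exhibits $x$ as a limit of points of $M\cap C$, so $x\in\cl{M\cap C}$.

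For the relative-interior equality, start by noting the automatic inclusion $\aff{M\cap C}\subseteq M\cap\aff{C}$, since $M$ and $\aff{C}$ are both affine sets containing $M\cap C$. To prove $M\cap\ri{C}\subseteq\ri{M\cap C}$, take $x\in M\cap\ri{C}$ and pick a neighborhood $U$ of $x$ with $U\cap\aff{C}\subseteq C$. Then
\[
U\cap\aff{M\cap C}\subseteq U\cap M\cap\aff{C}\subseteq M\cap C,
\]
so $x\in\ri{M\cap C}$. Conversely, given $x\in\ri{M\cap C}$, fix $y\in M\cap\ri{C}$; by the definition of relative interior applied within $M\cap C$, there exists $\mu>0$ with $z:=x+\mu(x-y)\in M\cap C\subseteq C$. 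Then $x$ lies on the open segment between $y\in\ri{C}$ and $z\in C$, so a second application of the line-segment principle gives $x\in\ri{C}$, hence $x\in M\cap\ri{C}$.

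The main obstacle is mostly bookkeeping: keeping the two uses of the line-segment principle straight (once to push boundary points of $C$ into the closure of $M\cap C$, once to pull a relative interior point of $M\cap C$ into $\ri{C}$), and making sure the hypothesis $M\cap\ri{C}\ne\emptyset$ is invoked wherever needed. Without this hypothesis both equalities fail, as the example $C=\{(a,b)\in\RR^2 : b\ge 0\}$ and $M=\{b=0\}$ already shows, so any proof must use it essentially in at least one place.
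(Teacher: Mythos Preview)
The paper does not supply its own proof of this statement; it is quoted verbatim as Corollary~6.5.1 of Rockafellar's \emph{Convex Analysis} and used as a black box. Your argument is correct and is essentially the standard proof found in Rockafellar, which derives the result from the line-segment principle (his Theorem~6.1) together with the algebraic characterization of $\ri{\cdot}$ (his Theorem~6.4, which is exactly the ``there exists $\mu>0$ with $x+\mu(x-y)\in D$'' criterion you invoke).

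One minor correction to your closing parenthetical: in the example $C=\{(a,b):b\ge 0\}$, $M=\{b=0\}$, only the relative-interior equality fails; the closure equality $\cl{M\cap C}=M\cap\cl{C}$ still holds there since both sides equal $M$. If you want an example where the closure equality breaks down, take $C$ to be an open disk and $M$ a tangent line, so that $M\cap C=\emptyset$ but $M\cap\cl{C}$ is a single point.
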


\begin{theorem}\cite[Theorem 6.8]{convexanalysis}\label{relative point decomposation}
Let C be a convex set in $\RR^{m+p}$. For each $y\in \RR^m$, let $C_y$ be the set
of vectors $z\in \RR^p$ such that $(y,z)\in C$. Let $D=\{y\mid C_y\neq 0\}$.
Then $(y,z)\in\ri{C}$ if and only if $y\in\ri{D}$ and  $z\in \ri{C_y}$.
\end{theorem}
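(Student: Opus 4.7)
The statement is exactly Rockafellar's theorem on fibers of convex sets, so the plan is to recover it from the operational behavior of relative interiors under projection and under intersection with affine subspaces. Let $\pi:\RR^{m+p}\to\RR^m$ be the projection $(y,z)\mapsto y$. Note that $D=\pi(C)$, so $D$ is convex. The two inputs I intend to use are (a) the standard fact that $\ri{\pi(C)}=\pi(\ri{C})$ for any linear map applied to a convex set, and (b) Theorem \ref{th::convexclosure} above, which asserts that if an affine flat $M$ meets $\ri{C}$ then $\ri{M\cap C}=M\cap\ri{C}$.

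For the forward direction, suppose $(y,z)\in\ri{C}$. Then $y=\pi(y,z)\in\pi(\ri{C})=\ri{D}$ by (a), yielding the first claim. For the second, let $M_y=\{y\}\times\RR^p$. Since $(y,z)\in M_y\cap\ri{C}$, the affine set $M_y$ meets $\ri{C}$, and (b) gives $\ri{M_y\cap C}=M_y\cap\ri{C}$. Under the natural identification of $M_y\cap C$ with $C_y$ in $\RR^p$, this says precisely $\ri{C_y}=\{z':(y,z')\in\ri{C}\}$, and in particular $z\in\ri{C_y}$.

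For the reverse direction, suppose $y\in\ri{D}$ and $z\in\ri{C_y}$. By (a) again, $\ri{D}=\pi(\ri{C})$, so there exists some $z^\ast$ with $(y,z^\ast)\in\ri{C}$. Thus $M_y$ meets $\ri{C}$, and (b) again gives $\ri{M_y\cap C}=M_y\cap\ri{C}$, i.e. $\ri{C_y}=\{z':(y,z')\in\ri{C}\}$. Since $z\in\ri{C_y}$, we conclude $(y,z)\in\ri{C}$, as desired.

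The one nonroutine ingredient is step (a), the commutation of $\ri{\cdot}$ with a linear image; this would be the principal obstacle if one had to prove it from scratch, but it is a classical fact (Rockafellar, Theorem 6.6) and is natural to invoke here alongside the already-cited Theorem \ref{th::convexclosure}. Once both are in hand, the two implications are symmetric and reduce to the identification $\ri{C_y}=\{z:(y,z)\in\ri{C}\}$ whenever $y\in\ri{D}$.
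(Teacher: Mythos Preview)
Your argument is correct. However, there is nothing to compare it against: the paper does not supply its own proof of this statement. Theorem~\ref{relative point decomposation} is simply quoted from Rockafellar's \emph{Convex Analysis} (Theorem~6.8) as a preliminary tool, with no proof given in the paper itself. Your derivation---reducing both directions to the identity $\ri{C_y}=\{z':(y,z')\in\ri{C}\}$ for $y\in\ri{D}$, via the projection formula $\ri{\pi(C)}=\pi(\ri{C})$ together with Theorem~\ref{th::convexclosure}---is the standard route and is sound.
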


\begin{define}
A closed convex cone $K$ is {\itshape pointed} if $K\cap-K=\{0\}$, i.e., $K$
contains no lines through the origin.
\end{define}

\begin{prop}\cite[Section 3.3, Exercise 20]{CANO}\label{prop::pc}
Consider a closed convex cone K in $\RR^n$. A base for K is a convex set with
$0\notin\cl{C}$ and $K=\RR_+C$. The following properties are equivalent:
\begin{enumerate}[\upshape (a)]
\item $K$ is pointed;
\item $\cl{K^*-K^*}=\RR^n$;
\item $(K^*-K^*)=\RR^n$;
\item $K^*$ has nonempty interior;
\item There exists a vector $y\in\RR^n$
and real $\epsilon >0$ with $\langle
y,x\rangle\geq \epsilon \|x\|_2$, for all points $x \in K$;
\item $K$ has a bounded base.
\end{enumerate}
\end{prop}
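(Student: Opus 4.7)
The plan is to prove all six conditions equivalent via the cycle (a) $\Rightarrow$ (b) $\Rightarrow$ (c) $\Rightarrow$ (d) $\Rightarrow$ (e) $\Rightarrow$ (f) $\Rightarrow$ (a). The first four conditions are linked by duality (through Proposition~\ref{prop::conedual}) together with standard facts about linear spans of convex cones, while the last two are geometric and follow from explicit constructions of a separating functional and a bounded base.

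For (a) $\Rightarrow$ (b), I would use $(-K)^* = -K^*$ and apply Proposition~\ref{prop::conedual} to obtain
\[
(K \cap -K)^* = \cl{K^* + (-K^*)} = \cl{K^* - K^*},
\]
so pointedness forces the right-hand side to equal $\{0\}^* = \RR^n$. For (b) $\Rightarrow$ (c), observe that $K^* - K^*$ is itself a linear subspace of $\RR^n$ (as $K^*$ is a convex cone containing the origin, the set $K^* - K^*$ is closed under both sums and arbitrary scalar multiplication), hence already closed. For (c) $\Rightarrow$ (d), the subspace $K^* - K^*$ is the affine hull of $K^*$, and a convex set has nonempty interior in $\RR^n$ exactly when its affine hull equals $\RR^n$.

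For (d) $\Rightarrow$ (e), I would pick $y$ in the interior of $K^*$ and $\epsilon > 0$ with $B(y, \epsilon) \subseteq K^*$; then $y - \epsilon v \in K^*$ for every unit vector $v$, so $\langle y, x \rangle \geq \epsilon \langle v, x \rangle$ for all $x \in K$, and specializing $v := x/\|x\|_2$ (for $x \neq 0$) yields the desired bound. For (e) $\Rightarrow$ (f), the slice $C := K \cap \{x \mid \langle y, x \rangle = 1\}$ is convex and bounded (any $c \in C$ satisfies $\|c\|_2 \leq 1/\epsilon$), does not contain $0$ in its closure (since the hyperplane $\langle y, \cdot \rangle = 1$ is closed and avoids the origin), and satisfies $K = \RR_+ C$ via the decomposition $x = \langle y, x \rangle \cdot (x / \langle y, x \rangle)$ for $x \in K \setminus \{0\}$. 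For (f) $\Rightarrow$ (a), if $0 \neq x \in K \cap -K$, writing $x = \lambda_1 c_1$ and $-x = \lambda_2 c_2$ with $\lambda_i > 0$ and $c_i \in C$ would force the convex combination $\frac{\lambda_1 c_1 + \lambda_2 c_2}{\lambda_1 + \lambda_2} = 0$ to lie in $C$ by convexity, contradicting $0 \notin \cl{C}$.

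The argument is entirely routine once Proposition~\ref{prop::conedual} is available; the only points requiring a moment of care are the step (c) $\Leftrightarrow$ (d), where one must justify that the linear span of a convex cone containing the origin coincides with its affine hull, and the step (e) $\Rightarrow$ (f), where both boundedness of $C$ and its separation from the origin need to be extracted from the quantitative inequality in (e) rather than from a merely qualitative separation.
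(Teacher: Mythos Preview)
Your proposal is correct and follows essentially the same cycle (a) $\Rightarrow$ (b) $\Rightarrow$ (c) $\Rightarrow$ (d) $\Rightarrow$ (e) $\Rightarrow$ (f) $\Rightarrow$ (a) as the paper, using Proposition~\ref{prop::conedual} for the duality steps and the same slice construction for the base. The only cosmetic difference is in (d) $\Rightarrow$ (e): the paper takes $y$ in the interior of $K^*$ and sets $\epsilon := \min\{\langle y,u\rangle \mid u\in K,\ \|u\|_2=1\}$ via compactness, whereas you extract $\epsilon$ directly from a ball $B(y,\epsilon)\subseteq K^*$; both are standard and yield the same conclusion.
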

\begin{proof}
See Appendix.
\end{proof}

It is well known that the convex hull of a compact
set in $\RR^n$ is closed.
 However, it is generally not true for a non-compact
set. For example, let
\[
V:=\{(x_1,x_2)\in\RR^2\mid \ x_1x_2=1, \,  x_2\geq0\}\cup\{(0,0)\},
\]
then
\[
\co{V}=\{(x_1,x_2)\mid x_1>0,\, x_2>0\}\cup\{(0,0)\},
\]
which is not closed.

\begin{theorem}\label{th::pointedcone}
Let  $K$ be a closed cone.
The  following assertions
are equivalent:
\begin{enumerate}[\upshape (a)]
\item $\co{K}$ contains no lines through the origin;
\item $\co{K}$ is closed  and pointed;
\item There exists a vector
$c=(c_1,\ldots, c_n)\in\RR^n$ such that $\langle c,
x\rangle>0$ for all $x\in\co{K}\backslash \{0\}$.
\end{enumerate}
\end{theorem}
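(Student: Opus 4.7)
My plan is to establish the cycle (b) $\Rightarrow$ (a) $\Rightarrow$ (c) $\Rightarrow$ (b). The implication (b) $\Rightarrow$ (a) is immediate: a line $\RR v \subseteq \co{K}$ with $v\neq 0$ would place $v$ in $\co{K}\cap(-\co{K})$, contradicting pointedness. For the remaining two implications, the central object is the compact base $B := \co{K\cap S^{n-1}}$. Since $K$ is closed, $K\cap S^{n-1}$ is compact, so $B$ is compact by Carath\'eodory's theorem. A standard scaling argument, using that $K$ is a cone so $x/\|x\|_2 \in K\cap S^{n-1}$ whenever $x\in K\setminus\{0\}$, shows $\co{K} = \RR_{\geq 0}\cdot B$; in particular $\co{K}$ is a convex cone.

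For (a) $\Rightarrow$ (c), I will first argue that $0\notin B$. If $0 = \sum_{i=1}^r \lambda_i \hat{x}_i$ were a convex combination of points $\hat{x}_i \in K\cap S^{n-1}$ with $\lambda_i>0$, then $r\geq 2$ (since $\|\hat{x}_i\|_2 = 1$) and $\hat{x}_1 = -\sum_{i\geq 2}(\lambda_i/\lambda_1)\hat{x}_i$ lies in $\co{K}\cap(-\co{K})$ and is nonzero, forcing the line $\RR \hat{x}_1 \subseteq \co{K}$ and contradicting (a). Strict separation of the compact convex set $B$ from the origin then produces $c\in\RR^n$ and $\alpha>0$ with $\langle c, x\rangle \geq \alpha$ for all $x\in B$, and any $y = rx \in \co{K}\setminus\{0\}$ with $r>0$, $x\in B$, satisfies $\langle c, y\rangle \geq r\alpha > 0$.

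For (c) $\Rightarrow$ (b), pointedness is one line: if $v,-v \in \co{K}$ with $v\neq 0$, then both $\langle c, v\rangle > 0$ and $\langle c, -v\rangle > 0$. The closedness of $\co{K}$ is the main obstacle. Evaluating $c$ on a convex combination representation of $0$ shows $0\notin B$ (since every $\hat{x}_i\in K\cap S^{n-1}$ satisfies $\langle c,\hat{x}_i\rangle>0$), so by compactness of $B$ there is $d>0$ with $\|x\|_2 \geq d$ on $B$. Given $y_k = r_k x_k \to y$ in $\co{K}$ with $r_k\geq 0$, $x_k\in B$, the bound $r_k \leq \|y_k\|_2/d$ keeps $(r_k)$ bounded; extracting a subsequence gives $r_k\to r\geq 0$ and $x_k\to x\in B$, whence $y = rx \in \RR_{\geq 0}\cdot B = \co{K}$. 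Without the bound $0\notin B$, the failure mode $r_k\to\infty$ and $x_k\to 0$ can produce limits outside $\co{K}$, which is precisely why a general closed cone need not have closed convex hull, so this is the delicate step of the proof.
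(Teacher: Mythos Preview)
Your argument is correct. The route differs from the paper's: the paper runs the cycle in the order $(c)\Rightarrow(a)\Rightarrow(b)\Rightarrow(c)$, invoking Proposition~\ref{prop::pc} to reduce everything to the single implication $(a)\Rightarrow(b)$, and then proves closedness of $\co{K}$ directly from (a) by taking a sequence $x^{(r)}\to x$ in $\co{K}$, writing each $x^{(r)}$ via Carath\'eodory as $\sum_{l=1}^{n+1}\lambda_{r,l}x^{(r,l)}$ with $x^{(r,l)}\in K$, extracting subsequential limit directions $y_l\in K$, and only then producing a separating functional for the polyhedral cone generated by the $y_l$ in order to bound the coefficients $\lambda_{r,l}\Vert x^{(r,l)}\Vert_2$. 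You instead fix the compact base $B=\co{K\cap S^{n-1}}$ once, show $\co{K}=\RR_{\ge 0}B$, and use a single global separation of $B$ from $0$ both to obtain (c) from (a) and to control the radial parameter in the closedness argument for $(c)\Rightarrow(b)$. Your approach is cleaner and avoids the per-sequence extraction of limit directions; the paper's approach has the minor structural advantage that it isolates the hard step as $(a)\Rightarrow(b)$ and delegates the rest to the standing Proposition~\ref{prop::pc}. Both arguments hinge on the same phenomenon you identify at the end: a positive lower bound for the separating functional on unit directions is exactly what prevents $r_k\to\infty$ with $x_k\to 0$.
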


\begin{proof}
By Proposition \ref{prop::pc} (e), it is sufficient to prove $(a)
\Rightarrow (b)$.

Fix a point $x\in\cl{\co{K}}$, then there is a
sequence $\{x^{(r)}\}_{r=1}^\infty\subseteq\co{K}$ such that $x^{(r)}\rightarrow
x$ as $r\rightarrow\infty$. For each $r\in\N$, by Carath\'{e}odory's Theorem,
there exist $\{x^{(r,l)}\}_{l=1}^{n+1}\subseteq K$ and
$\{\lambda_{r,l}\}_{l=1}^{n+1}\subseteq[0,1]$ such that
$\sum_{l=1}^{n+1}\lambda_{r,l}=1$ and
\begin{equation}\label{convex expression}
x^{(r)}=\sum_{l=1}^{n+1}\lambda_{r,l}x^{(r,l)}=\sum_{l=1}^{n+1}\lambda_{r,l}\Vert
x^{(r,l)}\Vert_2\frac{x^{(r,l)}}{\Vert x^{(r,l)}\Vert_2}.
\end{equation}
Since the sequence $\{x^{(r,l)}/\|x^{(r,l)}\|_2\}_{r=1}^\infty$
is bounded for each $l$, there exists a subsequence $x^{(r_t,l)}$ such that
\begin{equation}\label{eq::converge}
\lim_{t\rightarrow\infty}\frac{x^{(r_t,l)}}{\Vert x^{(r_t,l)}\Vert_2}=y_l,\quad l=1,\ldots,n+1.
\end{equation}
Because $K$ is a closed cone, each $y_l\in K$. Without loss of generality,
 we assume $(\ref{eq::converge})$ is true for the whole sequence. In the
following, we prove that the sequence $\{\lambda_{r,l}
\|x^{(r,l)}\|_2\}_{r=1}^\infty$ is bounded for each $l$.

The closed cone
$\{\sum_{l=1}^n\mu_ly_l\mid\mu_l\ge 0\}$ is pointed since it is contained in
$\co{K}$. By Proposition \ref{prop::pc} (e), there exists a unit vector $c\in\RR^n, \|c\|_2=1$ 
 and
$\epsilon>0$ such that $\langle c, y_l\rangle>\epsilon$ for each $1\le l\le
n+1$. Then there exists an
$N\in\N$ such that
\[
\frac{\langle c, x^{(r,l)}\rangle}{\Vert
x^{(r,l)}\Vert_2}>\frac{\epsilon}{2},\quad \forall r>N,\ \ 1\le l\le n+1.
\]
Therefore,
\[
\begin{aligned}
\Vert x^{(r)}\Vert_2&=\Big\Vert\sum_{l=1}^{n+1}\lambda_{r,l}
\|x^{r,l}\|_2\frac{x^{r,l}}{\|x^{(r,l)}\|_2}\Big\Vert_2\\
&=\Big\Vert\sum_{l=1}^{n+1}\frac{\lambda_{r,l}
\|x^{r,l}\|_2}{\sum_{l=1}^{n+1}\lambda_{r,l}
\|x^{r,l}\|_2}\frac{x^{r,l}}{\|x^{(r,l)}\|_2}\Big(\sum_{l=1}^{n+1}\lambda_{r,l}
\|x^{r,l}\|_2\Big)\Big\Vert_2\\
&=\Vert c\Vert_2\Big\Vert\sum_{l=1}^{n+1}\frac{\lambda_{r,l}
\|x^{r,l}\|_2}{\sum_{l=1}^{n+1}\lambda_{r,l}
\|x^{r,l}\|_2}\frac{x^{r,l}}{\|x^{(r,l)}\|_2}\Big\Vert_2\left(\sum_{l=1}^{n+1}\lambda_{r,l}
\|x^{r,l}\|_2\right)\\
&\ge\left(\sum_{l=1}^{n+1}\frac{\lambda_{r,l}
\|x^{r,l}\|_2}{\sum_{l=1}^{n+1}\lambda_{r,l}
\|x^{r,l}\|_2}\frac{\langle c, x^{r,l}\rangle}{\|x^{(r,l)}\|_2}\right)\left(\sum_{l=1}^{n+1}\lambda_{r,l}
\|x^{r,l}\|_2\right)\\
&>\frac{\epsilon}{2}\left(\sum_{l=1}^{n+1}\lambda_{r,l}
\|x^{r,l}\|_2\right).
\end{aligned}
\]
Since $x^{(r)}\rightarrow x$ as $r\rightarrow\infty$, each sequence
$\{\lambda_{r,l} \|x^{(r,l)}\|_2\}_{r=1}^\infty$ is bounded. There
exists a subsequence $\{\lambda_{r_t,l} \|x^{(r_t,l)}\|_2\}_{t=1}^\infty$ such
that
\[
\lim_{t\rightarrow\infty} \lambda_{r_t,l} \|x^{(r_t,l)}\|_2=\mu_l,\quad 1\le l\le n+1
\]
for some $\mu_l$. Without loss of generality, we assume this is true for the whole
sequence. Then
\[
\begin{aligned}
x&=\lim_{r\rightarrow\infty}x^{(r)}\\
&=\lim_{r\rightarrow\infty}\sum_{l=1}^{n+1}\lambda_{r,l}\Vert
x^{(r,l)}\Vert_2\frac{x^{(r,l)}}{\Vert x^{(r,l)}\Vert_2}\\
&=\sum_{l=1}^{n+1}\mu_ly_l \in\co{K}.
\end{aligned}
\]
Hence $\co{K}$ is closed and
$\cl{\co{K}}=\co{K}$ contains no line.
\end{proof}


\begin{remark}
Although the pointedness is defined on closed convex sets,
 by Theorem \ref{th::pointedcone},
it is safe to say that $\co{K}$ is pointed for a closed cone $K$ if  $\co{K}\cap-\co{K}=\{0\}$.
\end{remark}

\subsection{Quadratic modules and moment matrices}
 Let  $\N$ denote  the set of nonnegative integers and we set
$\N^n_k:=\{\alpha\in\N^n\mid\vert\alpha\vert=\sum_{i=1}^n \alpha_i \le k\}$ for
$k\in\N$.  The symbol $\RR[X]$ denotes the ring of multivariate polynomials in
variables $(X_1,\ldots,X_n)$ with real coefficients. For $\alpha\in\N^n$,
$X^{\alpha}$ denotes the monomial $X_1^{\alpha_1}\cdots X_n^{\alpha_n}$ whose
degree is $\vert\alpha\vert:=\sum_{i=1}^n \alpha_i$.  The symbol $\RR[X]_k$
denotes the set of real polynomials of degree at most $k$.

For any $p(X)\in\RR[X]_k$, let  $\bf{p}$ denote its column vector of
coefficients  in the monomial basis of
$\RR[X]_k$. A polynomial $p(X)\in\RR[X]$ is said to be a {\itshape sum of
squares of polynomials} (SOS) if it can be written as $p(X)=\sum_{i=1}^s
u_i(X)^2$ for some $u_1(X),\ldots,u_s(X)\in\RR[X]$.  The symbol
 $\Sigma^2$ denotes the set of polynomials that are sums of squares.

Let $G:=\{g_1,\ldots,g_m\}$ be  a set of polynomials that define the
semialgebraic set $S$.
 We denote
 \[
  \mathcal{Q}(G):=\left\{\sum_{j=0}^m\sigma_jg_j\ \Big|\ g_0=1,\
\sigma_j \in \Sigma^2 \right\}
\]
as the {\itshape quadratic module} generated by $G$ and its  $k$-th {\itshape
quadratic module}
\[
\Qk(G):=\left\{\sum_{j=0}^m\sigma_jg_j\ \Big|\ g_0=1,\
\sigma_j \in \Sigma^2, \,
\deg(\sigma_jg_j)\le 2k\right\}.
\]
It is clear that $p(x)\ge 0$ for any $p\in\mathcal{Q}(G)$ and $x\in S$.
\begin{define}\label{def::AC}
We say $\cal{Q}(G)$ satisfies the {\itshape Archimedean condition}  if
there exists $\psi\in \cal{Q}(G)$ such that the inequality $\psi(x)\ge 0$
defines a compact set in $\RR^n$.
\end{define}
Note that the  Archimedean condition implies $S$ is compact but the inverse is not
necessarily true. However, for any compact set $S$ we can always
``force'' the associated quadratic module to satisfy the condition by adding a
``redundant'' constraint $M-\Vert x\Vert^2_2$ for sufficiently large $M$.

\begin{theorem}\label{th::PP}{\upshape\cite[{\scshape Putinar's
Positivstellensatz\/}]{Putinar1993}} Suppose that $\mathcal{Q}(G)$
satisfies the Archimedean condition. If a polynomial $p\in\RR[X]$ is
positive on $S$, then $p\in\Qk(G)$ for some $k\in\N$.
\end{theorem}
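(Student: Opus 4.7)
The plan is to argue by contradiction: assume $p>0$ on $S$ but $p\notin\cal{Q}(G)=\bigcup_k\Qk(G)$, and manufacture a point $x\in S$ at which $p(x)\le 0$. Showing $p\in\cal{Q}(G)$ is equivalent to showing $p\in\Qk(G)$ for some $k$, so it suffices to work with the full quadratic module.

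First I would extract from the Archimedean hypothesis a bounded-denominators statement: for every $f\in\RR[X]$ there is $N\in\N$ with $N\pm f\in\cal{Q}(G)$. Starting from the witness $\psi$ of Definition~\ref{def::AC} one gets $N_i\pm X_i\in\cal{Q}(G)$ for each coordinate (a compact sublevel set of $\psi$ is contained in some box), and then a degree induction using closure of $\cal{Q}(G)$ under sums and under multiplication by squares propagates the bound to an arbitrary $f$. I would also record the algebraic lemma that a quadratic module $M$ which is maximal among those not containing a given element is automatically an \emph{ordering}, that is, $M\cup(-M)=\RR[X]$ and $M\cap(-M)$ is a prime ideal.

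Next I would perform a Zorn's-lemma extension. Because $p\notin\cal{Q}(G)$, the set of quadratic modules $M$ containing $\cal{Q}(G)$ but not containing $-p$ (equivalently, not containing $-1$ after a mild shift) is nonempty and inductive, hence admits a maximal element $M_\star$. By the previous lemma $M_\star$ is an ordering, and the bounded-denominators property descends to it. The classical Kadison--Dubois / Stone representation theorem then produces a ring homomorphism $\phi:\RR[X]\to\RR$, given in the usual way by
\[
\phi(f)=\sup\{q\in\Q\mid f-q\in M_\star\},
\]
which satisfies $\phi(M_\star)\subseteq[0,\infty)$ and, by choice of $M_\star$, $\phi(p)\le 0$.

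Finally I would read off the contradiction by setting $x_i:=\phi(X_i)$. Since $\phi$ is a unital ring homomorphism one has $\phi(f)=f(x)$ for every $f\in\RR[X]$, so $g_j(x)=\phi(g_j)\ge 0$ places $x$ in $S$, while $p(x)=\phi(p)\le 0$ contradicts positivity of $p$ on $S$. The main obstacle, and the technical heart of the proof, is the Kadison--Dubois step: one must verify that the Archimedean bound forces the ordered quotient ring to embed into $\RR$ rather than into a larger real closed field, and that the map $\phi$ defined by the supremum is actually additive and multiplicative. Everything else is formal manipulation of quadratic modules and a Zorn's-lemma extension.
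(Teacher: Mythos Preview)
The paper does not prove this theorem at all; it is simply quoted from \cite{Putinar1993} and used as a black box. So there is no ``paper's proof'' to compare against, and your sketch should be assessed on its own merits.

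Your outline is the standard Jacobi/Kadison--Dubois route and is essentially sound, but there is one slip and one point that needs more care. The slip: you want to extend $\cal{Q}(G)$ to a quadratic module $M_\star$ maximal among those \emph{not containing $p$}, not ``not containing $-p$''. From $p\notin M_\star$ one gets $\phi(p)\le 0$ (since $p-r\in M_\star$ with $r>0$ would force $p=(p-r)+r\in M_\star$), which is the contradiction you need; with $-p\notin M_\star$ you would only get $\phi(p)\ge 0$, which is useless. The parenthetical about ``not containing $-1$ after a mild shift'' does not straighten this out.

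The point needing care: the lemma you invoke, that a maximal $M_\star$ is automatically an \emph{ordering} with $M_\star\cap(-M_\star)$ a prime ideal, is the preordering statement. For a quadratic module one does not have closure under products, so the usual two-line argument for $M_\star\cup(-M_\star)=\RR[X]$ fails; it is precisely the Archimedean hypothesis that rescues this step (as in Jacobi's proof), and you should flag that the Archimedean bound is used here and not only in making $\phi$ finite-valued. Once $M_\star\cup(-M_\star)=\RR[X]$ is established, the verification that $\phi$ is a unital ring homomorphism with $\phi(M_\star)\subseteq[0,\infty)$ goes through as you indicate, and primeness of the support is then automatic from $\ker\phi$.
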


\begin{define}\cite[Definition 3]{convexsetLasserre}
(Putinar-Prestel's Bounded Degree Representation of affine
polynomials) One says that {\itshape Putinar-Prestel's Bounded
Degree Representation} (PP-BDR) of affine polynomials holds for $S$
if there exists $k\in\N$ such that if $p$ is affine and positive on
$S$, then $p\in\Qk(G)$, except perhaps on a set of vectors ${\mathbf
p}\in\RR^{n}$ with Lebesgue measure zero. Call $k$ its order.
\end{define}

Let $y:=(y_{\alpha})_{\alpha \in \mathbb{N}^n_{2k}}$ be a
{\itshape truncated moment sequence} of degree $2k$.
Its  associated $k$-th {\itshape
moment matrix} is the matrix  $M_k(y)$  indexed by $\mathbb{N}^n_{k}$,
with $(\alpha,\beta)$-th entry $y_{\alpha+\beta}$ for $\alpha, \beta \in \mathbb{N}^n_{k}$.
 Given a polynomial
$p(X)=\sum_{\alpha}p_{\alpha}X^{\alpha}$,
for $k\ge d_p=\lceil \deg(p)/2\rceil$, the
$(k-d_p)$-th {\itshape localizing moment matrix} $M_{(k-d_p)}(py)$ is defined
as the moment matrix of the {\itshape shifted vector}
$((py)_{\alpha})_{\alpha\in\mathbb{N}^n_{2(k-d_p)}}$ with
$(py)_{\alpha}=\sum_{\beta}p_{\beta}y_{\alpha+\beta}$.
${\mathscr{M}_{2k}}$ denotes the space of all truncated moment sequences with degree at most   $2k$.
For any $y\in\mathscr{M}_{2k}$, the Riesz functional $\mathscr{L}_{y}$ on
$\RR[X]_{2k}$ is  defined by
\[
\mathscr{L}_y\left(\sum_{\alpha}q_{\alpha}X_1^{\alpha_1}\cdots
X_n^{\alpha_n}\right)
:=\sum_{\alpha}q_{\alpha}y_{\alpha},\quad \forall q(X)\in\RR[X]_{2k}.
\]
From the definition of the localizing moment
matrix $M_{(k-d_p)}(py)$, it is easy to check that
\begin{equation}\label{eq::M}
\mathbf{q}^TM_{(k-d_p)}(py)\mathbf{q}=\mathscr{L}_y(p(X)q(X)^2),\quad \forall
q(X)\in\RR[X]_{k-d_p}.
\end{equation}

\subsection{Lasserre's relaxations and theta bodies}
For a compact basic semialgebraic set $S\subseteq\RR^n$,
 Lasserre investigated the semidefinite representations
of $\co{S}$ in \cite{convexsetLasserre}. Let $s(k):={n+k\choose n}$ and
$k_j:=\lceil\deg{g_j}/2\rceil$ for $j=1, \ldots, m$. For any $k\in\N$, define
\begin{equation}\label{eq::Lasserre}
\Omega_k(G):=\left\{
\begin{array}{c|c}
x\in\RR^n&
\begin{aligned}
&\exists y\in\RR^{s(2k)},\ \text{s.t.}\
\mathscr{L}_y(1)=1,\\
&\mathscr{L}_y(X_i)=x_i,\ i=1,\ldots,n,\ M_k(y)\succeq 0,\\
& M_{k-k_j}(g_jy)\succeq 0,\ j=1,\ldots,m,\\
\end{aligned}
\end{array}
\right\}.
\end{equation}
It has been  proved in \cite[Theorem 2; Theorem 6]{convexsetLasserre} that
\begin{enumerate}[1.]
\item If PP-BDR property holds for $S$ with order $k$, then $\co{S}=\Omega_k(G)$;
\item Assume $\cal{Q}(G)$ is Archimedean. Then for every fixed $\epsilon>0$,
there is $k_\epsilon\in\N$ such that
$\co{S}\subseteq\Omega_{k_\epsilon}(G)\subset\co{S}+\epsilon\mathbf{B}_1$.
\end{enumerate}

Another hierarchy of semidefinite relaxations of convex hulls closely related to
$\{\Omega_k(G)\}$ is called {\itshape theta bodies} defined on real varieties
\cite{thetabody, convexAGch7}, which can be  extended to semialgebraic
sets.
 Let $\RR[X]_1$ denote the subset of all linear polynomials in $\RR[X]$,
we have
\begin{equation}\label{eq::clcoS}
\cl{\co{S}}=\bigcap_{p\in\RR[X]_1, p\vert_{S}\ge 0}\{x\in\RR^n\mid p(x)\ge 0\}.
\end{equation}
Define the $k$-th theta body of $G$ as
\begin{equation}\label{eq::tb}
\tb_k(G):=\{x\in\RR^n\mid p(x)\ge 0,\quad\forall
p\in\Qk(G)\cap\RR[X]_1\}.
\end{equation}
Clearly, we have
\[
\tb_1(G)\supseteq\tb_2(G)\supseteq\cdots\supseteq\tb_{k}(G)\supseteq\tb_{k+1}(G)\supseteq\cdots
\supseteq\cl{\co{S}}.
\]
When $\cal{Q}(G)$ is Archimedean, by Putinar's Positivstellensatz,
(\ref{eq::clcoS}) and (\ref{eq::tb}), we  have immediately
\begin{equation}\label{eq::thetabodyconverge}
\cl{\co{S}}=\bigcap_{k=1}^\infty\tb_k(G).
\end{equation}

\begin{theorem}\label{th::thetaomega}
If $\Qk(G)$ is closed, then $\tb_k(G)=\cl{\Omega_k(G)}$.
\end{theorem}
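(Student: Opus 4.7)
The plan is to pass to a lifted conic picture in $\RR^{n+1}$, apply the bipolar theorem (this is where closedness of $\Qk(G)$ enters), and then dehomogenize. Let $\widehat{\mathcal{M}}_k\subseteq\RR^{s(2k)}$ denote the closed convex cone of truncated moment sequences $y$ satisfying $M_k(y)\succeq 0$ and $M_{k-k_j}(g_jy)\succeq 0$ for $j=1,\ldots,m$, i.e.\ all the PSD constraints in the definition of $\Omega_k(G)$ but without the normalization $\mathscr{L}_y(1)=1$. Expanding each SOS weight $\sigma_j\in\Sigma^2$ as a finite sum of squares and using the identity~(\ref{eq::M}), one checks that $y\in\widehat{\mathcal{M}}_k$ if and only if $\mathscr{L}_y(p)\ge 0$ for every $p\in\Qk(G)$, so under the Riesz pairing between $\RR[X]_{2k}$ and $\RR^{s(2k)}$ one has $\widehat{\mathcal{M}}_k=(\Qk(G))^*$. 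The bipolar theorem together with the hypothesis that $\Qk(G)$ is closed then yields $\widehat{\mathcal{M}}_k^{*}=\Qk(G)$.

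Next I would introduce the projected cone $C_k:=\{(\mathscr{L}_y(1),\mathscr{L}_y(X_1),\ldots,\mathscr{L}_y(X_n)):y\in\widehat{\mathcal{M}}_k\}\subseteq\RR^{n+1}$, so that $\Omega_k(G)=\{x\in\RR^n:(1,x)\in C_k\}$ by definition. Identifying $\RR^{n+1}$ with the space of affine polynomials $\ell=\ell_0+\sum_i\ell_iX_i$, an element $\ell$ lies in $C_k^{*}$ precisely when $\mathscr{L}_y(\ell)\ge 0$ for all $y\in\widehat{\mathcal{M}}_k$, i.e.\ when $\ell\in\widehat{\mathcal{M}}_k^{*}\cap\RR[X]_1=\Qk(G)\cap\RR[X]_1$. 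Applying the bipolar theorem once more gives $\cl{C_k}=C_k^{**}$, and slicing at first coordinate $=1$ produces exactly the defining inequalities of $\tb_k(G)$:
\[
\tb_k(G)=\{x\in\RR^n:(1,x)\in\cl{C_k}\}.
\]

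Finally I would dehomogenize. If $(1,x)\in\cl{C_k}$, choose $(t_r,x^{(r)})\in C_k$ with $(t_r,x^{(r)})\to(1,x)$; for $r$ large $t_r>0$, and since $C_k$ is a cone we have $(1,x^{(r)}/t_r)=t_r^{-1}(t_r,x^{(r)})\in C_k$, hence $x^{(r)}/t_r\in\Omega_k(G)$ with $x^{(r)}/t_r\to x$. This gives $\tb_k(G)\subseteq\cl{\Omega_k(G)}$. The reverse inclusion is straightforward: for $x\in\Omega_k(G)$ with witness $y$ and $p=\sum_j\sigma_jg_j\in\Qk(G)\cap\RR[X]_1$, $p(x)=\mathscr{L}_y(p)=\sum_j\mathscr{L}_y(\sigma_jg_j)\ge 0$ by the localizing PSD conditions, so $\Omega_k(G)\subseteq\tb_k(G)$ and closedness of $\tb_k(G)$ upgrades this to $\cl{\Omega_k(G)}\subseteq\tb_k(G)$. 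The pivotal step is the identity $\widehat{\mathcal{M}}_k^{*}=\Qk(G)$: the closedness of $\Qk(G)$ is indispensable there, for without it the bipolar theorem only delivers $\widehat{\mathcal{M}}_k^{*}=\cl{\Qk(G)}$, which can give a strictly smaller $C_k^{*}$ and hence a strictly larger $\tb_k(G)$ than $\cl{\Omega_k(G)}$.
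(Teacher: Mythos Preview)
Your argument is correct and follows the same conic-duality framework that the paper invokes (it simply cites \cite[Theorem~2.8]{thetabody}; the detailed analog appears later as Theorem~\ref{th::eq}). The one place where you deviate is the dehomogenization step: the paper's route, visible in Theorem~\ref{th::eq}, first establishes that the affine slice $M=\{(1,x)\}$ meets the \emph{relative interior} of the projected moment cone (Lemma~\ref{lem::ri}) and then appeals to Rockafellar's result (Theorem~\ref{th::convexclosure}) that closure commutes with such an intersection. You instead exploit the cone structure of $C_k$ directly: given $(t_r,x^{(r)})\to(1,x)$ with $(t_r,x^{(r)})\in C_k$, rescale to $(1,x^{(r)}/t_r)\in C_k$ and pass to the limit. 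This is more elementary and avoids the relative-interior lemma entirely; the paper's route, on the other hand, is phrased so that it applies verbatim to any closed convex set sliced by an affine subspace through its relative interior, not just to cones. Both approaches need the closedness of $\Qk(G)$ at exactly the point you identify, namely to get $\widehat{\mathcal{M}}_k^{*}=\Qk(G)$ rather than merely $\cl{\Qk(G)}$.
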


\begin{proof}
The proof is similar to the one    given in \cite[Theorem 2.8]{thetabody} for
$S$ being a real variety.
\end{proof}

The assumption of Archimedean condition plays an essential role in the hierarchy
of Lasserre's relaxations (\ref{eq::Lasserre}) and theta bodies (\ref{eq::tb}).
However,  for a non-compact semialgebraic set $S$, the Archimedean condition is
violated. We can not guarantee  that the sequence defined in
(\ref{eq::Lasserre}) or (\ref{eq::tb}) converges to $\cl{\co{S}}$. This  can be
observed from the following example.

\begin{example}\label{ex::counterex}
Consider the basic semialgebraic set
\begin{equation}\label{eq::defS}
S:=\{(x_1,x_2)\in\RR^2\mid x_1\ge 0,\
x_1^2-x_2^3\ge 0\}.
\end{equation}
As shown in Figure
\ref{fig::counterex}, $S$  defines  the gray shadow below the right half of the cusp.
Let $G:=\{X_1,\ X_1^2-X_2^3\}$. It is clear
 that the convex hull $\co{S}$ of $S$ is itself. We show a tangent line
$l(X_1,X_2):=1+2X_1-3X_2=0$ of $S$ at $(1, 1)$ in Figure \ref{fig::counterex}.

For every    $c_1X_1+c_2X_2+c_0 \in\Qk(G)\cap\RR[X]_1, c_0, c_1,
c_2\in\RR$, we have
\[
c_1X_1+c_2X_2+c_0=
\sigma_0(X_1,X_2)+\sigma_1(X_1,X_2)X_1+\sigma_2(X_1,X_2)(X_1^2-X_2^3),
\]
where $ \sigma_0, \sigma_1,\sigma_2\in \Sigma^2.$ Substituting $X_1=0$, we have
\[c_2X_2+c_0=\sigma_0(0,X_2)-X_2^3\sigma_2(0,X_2).\] Since the highest degree
terms in $\sigma_0(0,X_2)$ and $-X_2^3\sigma_2(0,X_2)$ can not cancel each other
out, we have
$\sigma_2(0,X_2)=0$ and $\sigma_0(0,X_2)$ is a constant.
This implies $c_2=0$ and
\[\tb_k(G)=\{(x_1, x_2)\in\RR^2\mid x_1\ge 0\}\]
 for all $k \in\N$.  Hence,
theta bodies $\tb_k(G)$ defined in (\ref{eq::tb}) cannot converge to $\co{S}$.
Moreover,
since $S$ has a nonempty interior, $\Qk(G)$ is closed for every $k\in\N$
\cite{PowersScheiderer, Schweighofer2005}. By Theorem \ref{th::thetaomega},
we have $\tb_k(G)=\cl{\Omega_k(G)}$ for $k\in\N$.
 Hence,
the hierarchies of Lasserre's relaxations $\Omega_k(G)$  defined in (\ref{eq::Lasserre})
cannot converge to $\co{S}$.

The main reason  that  $\tb_k(G)$
 does not converge to $\co{S}$ is  that none of
tangent lines of $S$, except $X_1=0$, can be approximated by polynomials in
$\Qk(G)\cap\RR[X]_1, \ k\in\N$. In particular,
$l_\epsilon:=l+\epsilon\not\in\Qk(G)$ for any $\epsilon>0,\ k\in\N$.
$\hfill\square$

\begin{figure}
\includegraphics[width=0.4\textwidth]{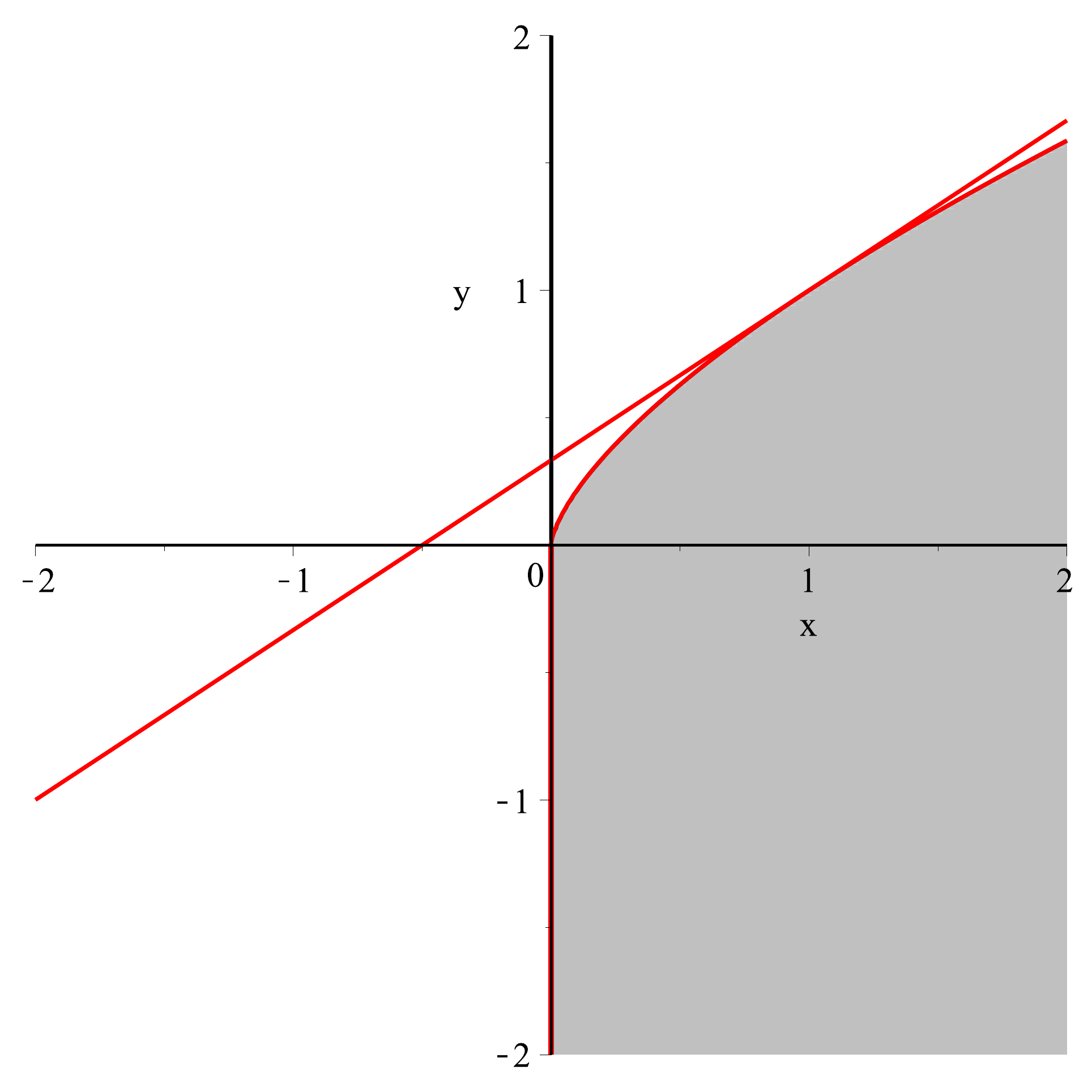}
\caption{The semialgebraic set $S$ in Example \ref{ex::counterex} and the
tangent line $l$.}\label{fig::counterex}
\end{figure}
\end{example}

\begin{remark}
Because the semialgebraic sets and projected spectrahedra in all examples in
this paper are unbounded, they are  shown in figures after being truncated
properly.
\end{remark}

In next section, we show how to overcome the difficulty in semidefinite
representations of convex hulls of non-compact semialgebraic sets.

\section{Semidefinite representations of non-compact convex sets}\label{maincontribution}
In this section, we  study how to modify   theta
bodies and Lasserre's relaxations for
 computing $\cl{\co{S}}$ when  $S$ is a   non-compact
semialgebraic set.    Our  main idea is to lift the cone of  ${S}$ to a cone of
$\wt{S}^{\text{o}}$ in $\RR^{n+1}$ via homogenization,  a technique which   has been used in  \cite{Fialkow2012,linshaowei2010} for dealing with non-compact semialgebraic sets, and  show that the
modified   theta bodies and Lasserre's relaxations
converge to $\cl{\co{S}}$ when
$S$ is closed at $\infty$ and  $\co{\cl{\wt{S}^{\text{o}}}}$ is closed and
pointed.
Some examples are  given to illustrate that the conditions of pointedness and closedness are essential
to ensure the convergence.

\subsection{Nested and closed convex approximations of $\cl{\co{S}}$}
Consider a polynomial $f(X)\in\RR[X]$ and its homogenization
$\td{f}(\wt{X})\in\RR[\wt{X}]$, where $\wt{X}=(X_0, X_1,\ldots,X_n)$ and
$\td{f}(\wt{X})=X_0^df(X/X_0),  \, d=\deg(f)$. For a given semialgebraic set
\begin{equation}\label{eq::Snew}
S:=\{x\in\RR^n\mid g_1(x)\ge 0, \ldots, g_m(x)\ge 0\},
\end{equation}
define
\begin{equation}\label{eq::Ss}
\begin{aligned}
\wt{S}^{\text{o}}&:=\{\td{x}\in\RR^{n+1}\mid \td{g}_1(\td{x})\ge 0,\ \ldots,\
\td{g}_m(\td{x})\ge 0,\ x_0>0\},\\
\wt{S}^{\text{c}}&:=\{\td{x}\in\RR^{n+1}\mid \td{g}_1(\td{x})\ge 0,\ \ldots,\
\td{g}_m(\td{x})\ge 0,\ x_0\ge 0\}, \\
\wt{S}&:=\{\td{x}\in\RR^{n+1}\mid \td{g}_1(\td{x})\ge 0,\ \ldots,\
\td{g}_m(\td{x})\ge 0,\ x_0\ge 0,\ \Vert \td{x}\Vert_2^2=1\}.
\end{aligned}
\end{equation}

\begin{prop}\cite[Proposition 2.1]{GWZ}\label{prop::eq}
$f(x)\ge 0$ on $S$ if and only if $\td{f}(\td{x})\ge 0$ on
$\cl{\wt{S}^{\text{\upshape o}}}$.
\end{prop}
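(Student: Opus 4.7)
The plan is to prove the two directions of the equivalence separately, using nothing more than the defining relation between $f$ and its homogenization $\td{f}$, namely $\td{f}(x_0, x_1, \ldots, x_n) = x_0^{\deg f} f(x_1/x_0, \ldots, x_n/x_0)$ for $x_0 > 0$, together with a continuity argument to pass from $\wt{S}^{\text{o}}$ to its closure.

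For the forward direction, I would fix $f$ with $f|_S \ge 0$ and pick an arbitrary $\td{x} = (x_0, x_1, \ldots, x_n) \in \wt{S}^{\text{o}}$. Because $x_0 > 0$, the positivity $\td{g}_i(\td{x}) \ge 0$ together with the homogenization identity $\td{g}_i(\td{x}) = x_0^{\deg g_i} g_i(x_1/x_0, \ldots, x_n/x_0)$ forces $g_i(x_1/x_0,\ldots,x_n/x_0) \ge 0$ for every $i$, so the dehomogenized point $x/x_0$ lies in $S$. Hence $f(x/x_0) \ge 0$, and multiplying by $x_0^{\deg f} > 0$ gives $\td{f}(\td{x}) \ge 0$. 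Since $\td{f}$ is a polynomial and hence continuous, nonnegativity propagates from $\wt{S}^{\text{o}}$ to $\cl{\wt{S}^{\text{o}}}$.

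For the reverse direction, suppose $\td{f} \ge 0$ on $\cl{\wt{S}^{\text{o}}}$ and take any $x \in S$. The lifted point $\td{x} := (1, x_1, \ldots, x_n)$ satisfies $x_0 = 1 > 0$, and $\td{g}_i(\td{x}) = g_i(x) \ge 0$ for every $i$, so $\td{x} \in \wt{S}^{\text{o}} \subseteq \cl{\wt{S}^{\text{o}}}$. Therefore $\td{f}(\td{x}) \ge 0$, and the identity $\td{f}(1, x_1, \ldots, x_n) = f(x)$ yields $f(x) \ge 0$.

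There is really no main obstacle here: the statement is essentially a bookkeeping observation, since $\wt{S}^{\text{o}}$ is by construction exactly the projective cone over $S$ with the open condition $x_0 > 0$, and the map $\td{x} \mapsto x/x_0$ is a bijection between $\wt{S}^{\text{o}}$ and $S$ (viewing $S$ as the slice $x_0 = 1$). The only subtlety worth stating explicitly is why one may freely pass to the closure in the forward direction, which is just the continuity of $\td{f}$; nothing from the preceding convex-analysis machinery (such as Proposition \ref{prop::pc} or Theorem \ref{th::pointedcone}) is needed for this proposition.
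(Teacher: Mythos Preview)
Your argument is correct and is precisely the natural one: dehomogenize to embed $\wt{S}^{\text{o}}$ into $S$ via $\td{x}\mapsto x/x_0$, use the identity $\td{f}(\td{x})=x_0^{\deg f}f(x/x_0)$ for $x_0>0$, and pass to the closure by continuity; conversely lift $x\in S$ to $(1,x)\in\wt{S}^{\text{o}}$.

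Note, however, that the paper does not supply its own proof of this proposition at all: it simply imports the statement from \cite[Proposition~2.1]{GWZ}. So there is nothing in the present paper to compare your proof against. What you have written is exactly the standard justification one would expect to find in that reference, and your closing remark is right that none of the convex-cone machinery developed earlier (Proposition~\ref{prop::pc}, Theorem~\ref{th::pointedcone}) is needed here.
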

\begin{cor}\label{cor::eq}
For any $f\in\RR[X]_1$, $f(x)\ge 0$ on $\cl{\co{S}}$ if and only if
$\td{f}(\td{x})\ge 0$ on $\co{\cl{\wt{S}^{\text{\upshape o}}}}$.
\end{cor}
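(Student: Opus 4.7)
The plan is to chain Proposition~\ref{prop::eq} with two elementary facts about linear functionals. Since $f \in \RR[X]_1$ is an affine (degree $\le 1$) polynomial, its homogenization $\td{f} \in \RR[\wt{X}]_1$ is linear in $\wt{X}$; both are continuous and preserve nonnegativity under convex combinations.

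First, I would reduce the left-hand statement from $\cl{\co{S}}$ to $S$. Because $f$ is affine, if $f(x) \ge 0$ holds on $S$ then for any finite convex combination $\sum \lambda_i x^{(i)}$ with $x^{(i)} \in S$ we have $f(\sum \lambda_i x^{(i)}) = \sum \lambda_i f(x^{(i)}) \ge 0$, so $f \ge 0$ on $\co{S}$; continuity of $f$ then extends this to $\cl{\co{S}}$. The converse inclusion $S \subseteq \cl{\co{S}}$ is trivial. Hence $f \ge 0$ on $\cl{\co{S}}$ if and only if $f \ge 0$ on $S$.

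Next, applying Proposition~\ref{prop::eq} directly gives $f \ge 0$ on $S$ if and only if $\td{f} \ge 0$ on $\cl{\wt{S}^{\text{o}}}$. Finally, I repeat the affine-extension argument on the homogeneous side: since $\td{f}$ is linear, $\td{f} \ge 0$ on $\cl{\wt{S}^{\text{o}}}$ is equivalent to $\td{f} \ge 0$ on $\co{\cl{\wt{S}^{\text{o}}}}$, again by convex-combination linearity in one direction and the trivial inclusion $\cl{\wt{S}^{\text{o}}} \subseteq \co{\cl{\wt{S}^{\text{o}}}}$ in the other. Concatenating the three equivalences yields the corollary.

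There is essentially no main obstacle here: the only subtlety is making sure one is allowed to pass freely between a set, its convex hull, and its closure when testing a linear inequality, which is immediate from linearity plus continuity. No topological pathology intervenes because affine functionals are automatically closed under the relevant limits. So the proof is a short three-line chain citing Proposition~\ref{prop::eq} in the middle step.
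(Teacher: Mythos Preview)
Your proposal is correct and follows essentially the same approach as the paper: reduce $f\ge 0$ on $\cl{\co{S}}$ to $f\ge 0$ on $S$ by linearity and continuity, invoke Proposition~\ref{prop::eq} to pass to $\td{f}\ge 0$ on $\cl{\wt{S}^{\text{o}}}$, and then use linearity again to extend to $\co{\cl{\wt{S}^{\text{o}}}}$. The paper's proof is a two-sentence sketch of exactly this chain; your version simply spells out the convex-combination and continuity steps explicitly.
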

\begin{proof}
Since $f(X)$ is linear, $f(x)\ge 0$ on $\cl{\co{S}}$ if and only if
$f(x)\ge 0$ on $S$, and $\td{f}(\td{x})\ge 0$ on $\co{\cl{\wt{S}^{\text{o}}}}$ if
and only if $\td{f}(\td{x})\ge 0$ on $\cl{\wt{S}^{\text{o}}}$. The conclusion
follows from Proposition \ref{prop::eq}.
\end{proof}

\begin{define}\cite{exactJacNie}
$S$ is closed at $\infty$ if $\cl{\wt{S}^{\text{o}}}=\wt{S}^{\text{c}}$.
\end{define}
As pointed out
in \cite[Remark 2.6]{GWZ}, not every semialgebraic set of form
(\ref{eq::Snew}) is closed at $\infty$. For instance, it is easy to prove that the set
\[
\{(x_1,x_2)\in\RR^2\mid x^2_1(x_1-x_2)-1=0,\ x_1-1\ge 0\}
\]
is not closed at $\infty$. However, it has been shown in \cite{GWZ} that the
closedness at $\infty$ is a generic property.

Let
$\P[\wt{X}]_1$ be
a  set of homogeneous polynomials of degree one in $\RR[\wt{X}]$
plus the zero polynomial.
We define
\begin{equation}\label{eq::defG}
\wt{G}:=\{\td{g}_1,\ldots,\td{g}_m,\ X_0,\ \Vert\wt{X}\Vert^2_2-1,\
1-\Vert\wt{X}\Vert^2_2\}.
\end{equation}
We consider the modified  theta bodies defined by
\begin{equation}\label{eq::newtheta}
\tbg{k}:=\{x\in\RR^n\mid\td{l}(1,x)\ge 0,\quad\forall\
\td{l}\in \Qk(\wt{G})\cap\P[\wt{X}]_1\}.
\end{equation}
Clearly, we have $\tbg{k+1}\subseteq\tbg{k}$ for each $k\in\N$.
\begin{ass}\label{assumption}
\begin{inparaenum}[\upshape(i\upshape)]
\item $S$ is closed at $\infty$;
\item The convex cone $\co{\cl{\wt{S}^{\text{\upshape o}}}}$ is
closed and pointed.
\end{inparaenum}
\end{ass}
\begin{remark}
The condition (ii) is equivalent to the other two conditions in Theorem
\ref{th::pointedcone} and can be verified by them.
\end{remark}

\begin{theorem}\label{th::main1}
Let $S\in\RR^n$ be the semialgebraic set defined as in $(\ref{eq::Snew})$.
Suppose that  Assumption \ref{assumption} is satisfied, then
$\cl{\co{S}}\subseteq\tbg{k}$ for every $k\in\N$ and
\begin{equation}\label{mainequ}
\cl{\co{S}}=\bigcap_{k=1}^\infty\wt{\tb}_k(\wt{G}).
\end{equation}
\end{theorem}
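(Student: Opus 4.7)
The proof splits into the two inclusions in (\ref{mainequ}). The easy direction $\cl{\co{S}}\subseteq\tbg{k}$ follows by fixing $x\in S$ and $\td{l}\in\Qk(\wt{G})\cap\P[\wt{X}]_1$: the point $(1,x)/\|(1,x)\|_2$ lies in $\wt{S}$, where every element of $\Qk(\wt{G})$ is nonnegative, and the degree-one homogeneity of $\td{l}$ in $\wt{X}$ yields $\td{l}(1,x)\ge 0$. Hence $S\subseteq\tbg{k}$, and because $\tbg{k}$ is closed and convex (being an intersection of affine halfspaces in $\RR^n$), we obtain $\cl{\co{S}}\subseteq\tbg{k}$ for every $k$.

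The substance of the theorem is the reverse inclusion $\bigcap_{k=1}^{\infty}\tbg{k}\subseteq\cl{\co{S}}$. I would attack this via (\ref{eq::clcoS}): it suffices to prove that every $p\in\RR[X]_1$ with $p|_S\ge 0$ satisfies $p(x)\ge 0$ on $\bigcap_k\tbg{k}$. Let $\td{p}\in\P[\wt{X}]_1$ be the homogenization of $p$. By Corollary \ref{cor::eq}, $\td{p}\ge 0$ on $\co{\cl{\So}}$; the closedness-at-$\infty$ hypothesis identifies $\cl{\So}$ with $\Sc$, and then $\wt{S}\subseteq\Sc=\cl{\So}\subseteq\co{\cl{\So}}$ forces $\td{p}\ge 0$ on the compact set $\wt{S}$. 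Pointedness of $\co{\cl{\So}}$ now enters through Proposition \ref{prop::pc}(e), producing $c\in\RR^{n+1}$ and $\epsilon>0$ such that $\langle c,\td{x}\rangle\ge\epsilon$ for every $\td{x}\in\wt{S}$. Setting $\td{c}(\wt{X}):=c_0X_0+\cdots+c_nX_n\in\P[\wt{X}]_1$, the linear form $\td{p}+\delta\td{c}$ satisfies $\td{p}+\delta\td{c}\ge\delta\epsilon>0$ on $\wt{S}$ for each $\delta>0$. Since $1-\|\wt{X}\|_2^2\in\wt{G}$ makes $\mathcal{Q}(\wt{G})$ Archimedean, Theorem \ref{th::PP} supplies some $k=k(\delta)$ with $\td{p}+\delta\td{c}\in\Qk(\wt{G})\cap\P[\wt{X}]_1$; then $x\in\bigcap_k\tbg{k}$ gives $\td{p}(1,x)+\delta\td{c}(1,x)\ge 0$ for all $\delta>0$, and letting $\delta\to 0^+$ yields $p(x)=\td p(1,x)\ge 0$.

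The principal obstacle is engineering the positive perturbation $\td{p}+\delta\td{c}$, which requires both parts of Assumption \ref{assumption} acting together. Closedness at $\infty$ guarantees that the points of $\wt{S}$ lying on the hyperplane $\{x_0=0\}$ actually sit inside $\cl{\So}$, without which Corollary \ref{cor::eq} would give no information about the sign of $\td{p}$ on this ``boundary at infinity'' slice. Pointedness supplies the single linear form $\td{c}$ that is uniformly bounded away from zero on all of $\wt{S}$; in its absence no such separator exists, and the perturb-then-apply-Putinar argument collapses. Once both hypotheses are in force the rest is a clean passage to the limit $\delta\to 0^+$ exploiting that $\tbg k$ is defined by countably many affine inequalities.
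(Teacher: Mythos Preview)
Your proof is correct and follows essentially the same route as the paper's: both use pointedness to produce a strictly positive linear form on $\co{\cl{\So}}$, closedness at $\infty$ to ensure $\wt{S}\subseteq\cl{\So}$, and then Putinar's Positivstellensatz on the compact set $\wt{S}$. The only cosmetic difference is that the paper argues the reverse inclusion by contrapositive---separating a point $u\notin\cl{\co{S}}$ by a hyperplane and choosing the perturbation small enough that the perturbed form stays negative at $u$---whereas you work through the characterization (\ref{eq::clcoS}) and pass to the limit $\delta\to 0^+$.
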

\begin{proof}

We first show $\cl{\co{S}}\subseteq\tbg{k}$ for every $k\in\N$.
 For an $\td{l}\in\Qk(\wt{G})\cap\P[\wt{X}]_1$, we have
 $\td{l}(\td{x})\ge 0$ on
$\wt{S}$. Since $\td{l}$ is homogeneous, we have $\td{l}(\td{x})\ge
0$ on $\Sc$.
Since $\So\subseteq\Sc$, we have $\td{l}(\td{x})\ge 0$ on $\co{\cl{\So}}$.
By Corollary \ref{cor::eq}, we have $\td{l}(1,x)=l(x)\ge 0$ on
$\cl{\co{S}}$, which implies that
 $\cl{\co{S}}$  is included in $\tbg{k}$ for every
$k\in\N$.  Thus, the modified theta
bodies defined in (\ref{eq::newtheta}) form a hierarchy of closed convex
approximations of  $\co{S}$ as follows:
\begin{equation}\label{eq::subs}
\tbg{1}\supseteq\tbg{2}\supseteq\cdots\supseteq\tbg{k}\supseteq\tbg{k+1}\supseteq\cdots
\supseteq\cl{\co{S}}.
\end{equation}

We now verify that this hierarchy converges to $\cl{\co{S}}$ asymptotically.
Assume  $u\notin\cl{\co{S}}$, we show that $u\notin \tbg{k}$ for some $k\in\N$.
Since
 $\cl{\co{S}}$ is closed and convex, by the hyperplane separation theorem, there exists
a vector $(f_0,\mathbf{f})\in\RR^{n+1}$ satisfies
\[
\langle\mathbf{f}, u\rangle<f_0 \quad\text{and}\quad \langle\mathbf{f},
x \rangle>f_0\quad \text{on}\quad  \cl{\co{S}}.
\]
Let  $\td{f}(\wt{X}):=\sum_{i=1}^n f_iX_i-f_0X_0\in\RR[\wt{X}]$,
then
\[
\td{f}(1,u)<0 \quad \text{and} \quad \td{f}(1,x)=f(x)>0 \quad \text{on} \quad \cl{\co{S}}.
\]
By Corollary \ref{cor::eq}, we have
\begin{equation}\label{eq::nn}
\wt{f}(\td{x})\ge 0 \quad\forall\ x\in\co{\cl{\wt{S}^{\text{o}}}}.
\end{equation}
Since $\co{\cl{\wt{S}^{\text{o}}}}$ is closed and pointed, by
Theorem \ref{th::pointedcone}, there exists a polynomial
$\td{g}(\wt{X})=\sum_{i=0}^ng_iX_i\in\P[\wt{X}]_1$ such that
$\td{g}(\td{x})>0$ on $\co{\cl{\wt{S}^{\text{o}}}}$. We choose a
small $\epsilon>0$ such that $(\td{f}+\epsilon\td{g})(1,u)<0$ and
rename $\td{f}+\epsilon\td{g}$ as $\td{f}$, then
\begin{equation}
\td{f}(1,u)<0 \quad \text{and}\quad  \td{f}(\td{x})>0 \quad
\text{on}\quad \cl{\wt{S}^{\text{o}}}.
\end{equation}

 We have assumed that $S$ is closed at $\infty$,
$\cl{\wt{S}^{\text{o}}}\cap\{\td{x}\in\RR^{n+1}\mid\Vert\td{x}\Vert_2=1\}=\wt{S},$
hence
\begin{equation}\label{existencefS}
\td{f}(1,u)<0 \quad \text{and}\quad \td{f}(\td{x})>0 \quad \text{on}
\quad  \wt{S}.
\end{equation}
 Since $\wt{S}$ is  compact, by Putinar's
Positivstellensatz, there exists a $k'\in\N$ such that $\td{f}\in
\mathcal{Q}_{k'}(\wt{G})\cap\P[\wt{X}]_1$. Since $\td{f}(1,u)<0$,
we have $u\not\in\tbg{k'}$. This implies
 \begin{equation}\label{eq::subsrev}
 \bigcap_{k=1}^\infty\wt{\tb}_{k}(\wt{G})\subseteq \cl{\co{S}}.
 \end{equation}

 Finally,  by (\ref{eq::subs}) and (\ref{eq::subsrev}),  we can
conclude $\cl{\co{S}}=\bigcap_{k=1}^\infty\wt{\tb}_{k}(\wt{G})$.
\end{proof}

\paragraph{\bf Example \ref{ex::counterex} (continued)}
By the definitions $(\ref{eq::defS})$ and $(\ref{eq::Ss})$,
\[
\begin{aligned}
\wt{S}^\text{o}&=\{(x_0, x_1, x_2)\in\RR^3\mid x_1\ge 0,\ x_0x_1^2-x_2^3\ge 0,\
x_0>0\},\\
\wt{S}^\text{c}&=\{(x_0, x_1, x_2)\in\RR^3\mid x_1\ge 0,\ x_0x_1^2-x_2^3\ge 0,\
x_0\ge 0\},\\
\wt{S}&=\{(x_0, x_1, x_2)\in\RR^3\mid x_1\ge 0,\ x_0x_1^2-x_2^3\ge 0,\ x_0\ge
0,\ \Vert \td{x}\Vert_2^2=1\}.
\end{aligned}
\]
In Figure \ref{fig::cone}, we show
the cone $\wt{S}^\text{c}$ in $\RR^3$
as well as the hyperplane
$\td{l}(X_0, X_1, X_2):=X_0+2X_1-3X_2=0$ generated by $l$.
\begin{figure}
\includegraphics[width=0.6\textwidth]{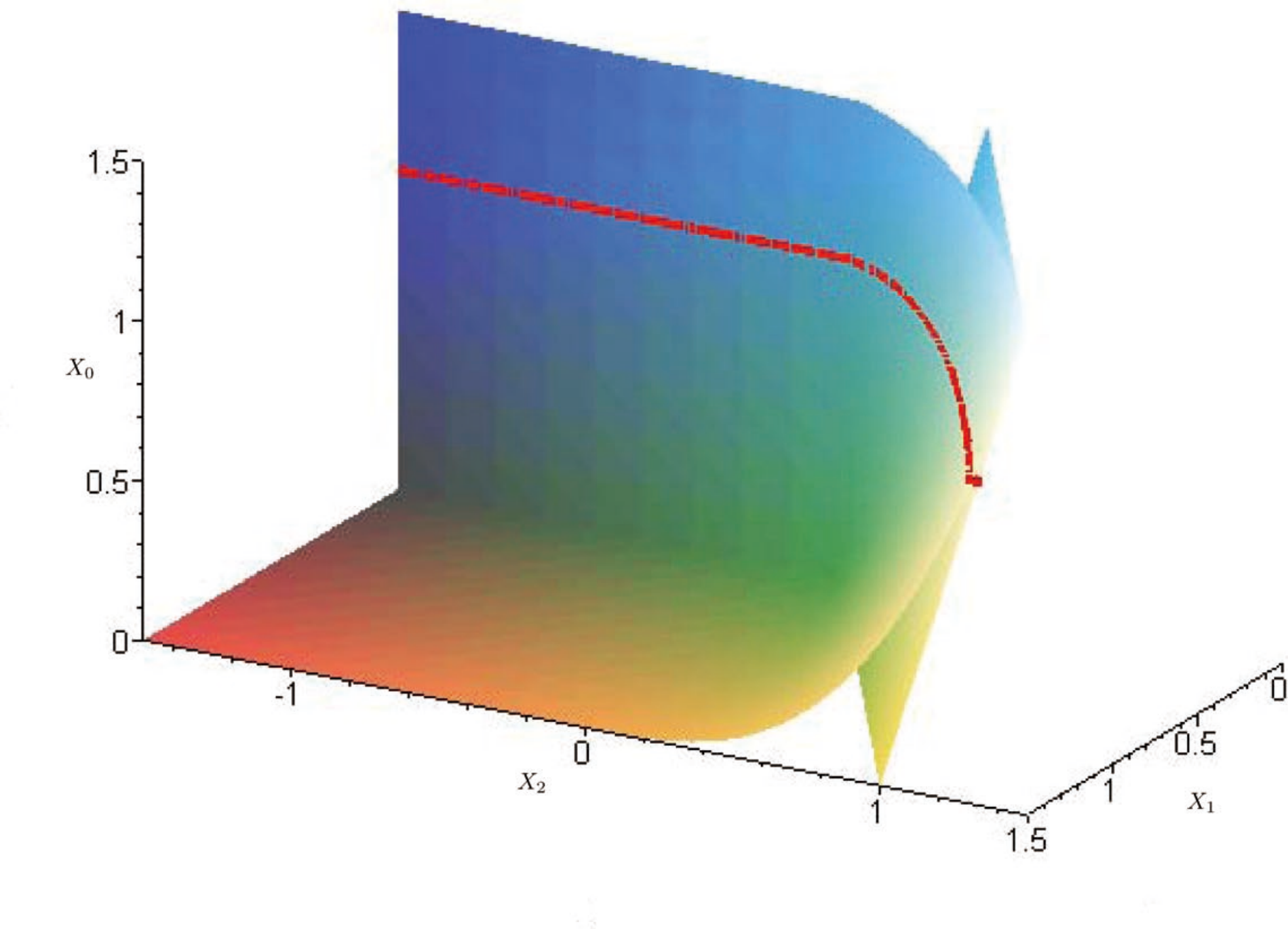}
\caption{The cone $\wt{S}^\text{c}$ and hyperplane $\td{l}$ generated by $S$ and $l$,
respectively, in Example \ref{ex::counterex}.}\label{fig::cone}
\end{figure}
It is shown in Figure \ref{fig::conesphere} that $\td{l}$ is nonnagetive on
$\wt{S}$.
\begin{figure}
\includegraphics[width=0.6\textwidth]{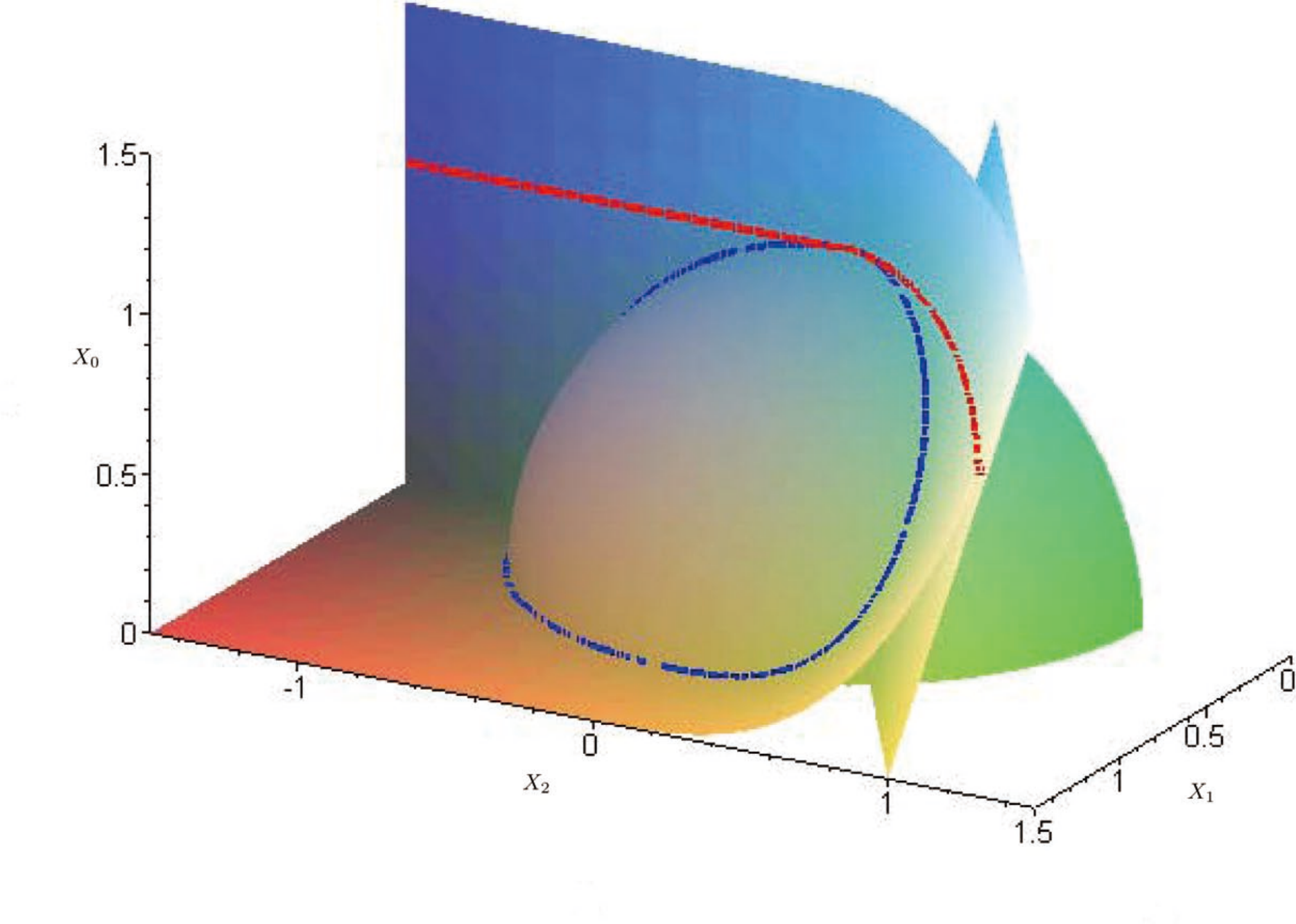}
\caption{The intersection of the cone $\wt{S}^\text{c}$ and the unit sphere in Example
\ref{ex::counterex}.}\label{fig::conesphere}
\end{figure}

For every $(0, u_1,
u_2)\in\wt{S}^{\text{c}}\backslash\wt{S}^{\text{o}}$, let
\[
u^{(\epsilon)}:=\left(\epsilon,\ u_1,\ \sqrt[3]{\epsilon u_1^2+u_2^3}\right).
\]
Then $\{u^{(\epsilon)}\}_{\epsilon>0}\subseteq\wt{S}^{\text{o}}$ and
$\lim_{\epsilon\rightarrow 0}u^{(\epsilon)}=(0, u_1, u_2)$. Hence, we have
$\wt{S}^{\text{c}}\backslash\wt{S}^{\text{o}}\subseteq\cl{\wt{S}^{\text{o}}}$ and
 $S$ is closed at $\infty$.  Moreover,
it can be verified that
\[\td{g}(X_0,X_1,X_2):=2X_0+2X_1-3X_2\] is positive on
$\co{\cl{\wt{S}^{\text{o}}}}\backslash\{0\}$ which implies
$\co{\cl{\wt{S}^{\text{o}}}}$ is pointed by Theorem \ref{th::pointedcone}. Hence
Assumption \ref{assumption} holds for $S$.

Let  $\epsilon>0$ tend to $0$, $\td{l}$ can be approximated by $\td{l}+
\epsilon \td{g}$ which are positive on $\wt{S}$.
Moreover, since
$\wt{S}$ is compact, by Putinar's Positivstellensatz,
$\td{l}+\epsilon \td{g}$ belongs to the quadratic module corresponding to
\[
\wt{G}:=\{X_0, X_1,\ X_0X_1^2-X_2^3, X_0^2+X_1^2+X_2^2-1, 1-X_0^2-X_1^2-X_2^2\}
\]
for every $\epsilon>0$.  Define
\[
\wt{\tb}_k(\wt{G}):=\{(x_1,x_2)\in\RR^2\mid\td{l}(1,x_1,x_2)\ge
0,\quad\forall\ \td{l}\in\Qk(\wt{G})\cap
\P[X_0,X_1,X_2]_1\},
\]
we have
\[
\cl{\co{S}}=\bigcap_{k=1}^\infty\wt{\tb}_k(\wt{G}).
\]

\begin{cor}\label{cor::main}
Let $S\in\RR^n$ be a semialgebraic set defined as in $(\ref{eq::Snew})$.
Suppose that Assumption \ref{assumption} is satisfied and PP-BDR property holds
for $\wt{S}$ with order $k'$, then $\cl{\co{S}}=\tbg{k'}$.
\end{cor}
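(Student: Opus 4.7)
The plan is to adapt the proof of Theorem \ref{th::main1}, replacing the asymptotic appeal to Putinar's Positivstellensatz by the uniform PP-BDR hypothesis. The inclusion $\cl{\co{S}}\subseteq\tbg{k'}$ is already contained in Theorem \ref{th::main1} (in fact it holds for every $k\in\N$), so the task reduces to showing the reverse inclusion $\tbg{k'}\subseteq\cl{\co{S}}$, which is best proved by contrapositive.

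First, I would fix $u\notin\cl{\co{S}}$ and rerun the separation construction from Theorem \ref{th::main1} verbatim: produce an affine $f$ with $f(u)<0$ and $f>0$ on $\cl{\co{S}}$; by Corollary \ref{cor::eq} its homogenization $\td{f}$ is nonnegative on $\co{\cl{\wt{S}^{\text{o}}}}$; invoke the pointedness hypothesis together with Theorem \ref{th::pointedcone} to pick $\td{g}\in\P[\wt{X}]_1$ positive on $\co{\cl{\wt{S}^{\text{o}}}}\setminus\{0\}$, and form $\td{f}_\epsilon:=\td{f}+\epsilon\td{g}$ for $\epsilon>0$ small enough that simultaneously $\td{f}_\epsilon(1,u)<0$ and $\td{f}_\epsilon>0$ on $\cl{\wt{S}^{\text{o}}}\setminus\{0\}$. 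The closedness of $S$ at infinity yields $\cl{\wt{S}^{\text{o}}}=\wt{S}^{\text{c}}$, and intersecting with the unit sphere shows that $\td{f}_\epsilon$ is strictly positive on the compact set $\wt{S}$. Where Theorem \ref{th::main1} applied Putinar's Positivstellensatz with a degree bound depending on $\td{f}_\epsilon$, I would now instead invoke PP-BDR of order $k'$ for $\wt{S}$ to obtain $\td{f}_\epsilon\in\mathcal{Q}_{k'}(\wt{G})\cap\P[\wt{X}]_1$; combined with $\td{f}_\epsilon(1,u)<0$ this immediately gives $u\notin\tbg{k'}$.

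The one point that takes genuine care, and which I expect to be the main (though minor) obstacle, is the measure-zero caveat in the definition of PP-BDR: membership in $\mathcal{Q}_{k'}(\wt{G})$ can fail on a Lebesgue-null set of coefficient vectors, and in principle our specific $\td{f}_\epsilon$ might land in that exceptional set. To sidestep this, I would observe that the collection of $\td{h}\in\RR[\wt{X}]_1$ satisfying $\td{h}>0$ on $\wt{S}$ (an open condition by compactness of $\wt{S}$) and $\td{h}(1,u)<0$ (a second open condition) is a nonempty open subset of the finite-dimensional space $\RR[\wt{X}]_1$, as witnessed by $\td{f}_\epsilon$. Since a nonempty open set in Euclidean space cannot lie in a Lebesgue-null set, one may perturb $\td{f}_\epsilon$ within this open set so as to avoid the PP-BDR exceptional set while preserving both defining inequalities, and then apply PP-BDR to the perturbed polynomial. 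This is the sole deviation from the argument of Theorem \ref{th::main1}, and it closes the proof.
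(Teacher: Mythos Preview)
Your proposal is correct and follows essentially the same route as the paper: both directions rely on Theorem \ref{th::main1}, and the reverse inclusion is shown by contrapositive via the separation/pointedness construction culminating in a strictly positive linear form on $\wt{S}$, to which PP-BDR is applied in place of Putinar. You are in fact more careful than the paper, which simply asserts $\td{f}\in\mathcal{Q}_{k'}(\wt{G})$ without addressing the measure-zero exceptional set in the PP-BDR definition. One small technical point: your perturbation is taken in the full space $\RR[\wt{X}]_1$, so the perturbed witness need not lie in $\P[\wt{X}]_1$, whereas the definition of $\tbg{k'}$ requires a homogeneous linear certificate; this is easily repaired by perturbing with a small \emph{negative} constant term $c<0$ (still an open, positive-measure set of choices) and then absorbing $-c>0$ as a sum of squares to recover a homogeneous linear element of $\mathcal{Q}_{k'}(\wt{G})$ that is still negative at $(1,u)$.
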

\begin{proof}
Suppose that $\wt{S}$ satisfies PP-BDR property with order $k'$, for every
$\td{f}(\td{x})>0$ on $\wt{S}$, we have $\td{f}\in\mathcal{Q}_{k'}(\wt{G})$. The
inclusion $\tbg{k'}\supseteq\cl{\co{S}}$ is obvious by (\ref{eq::subs}).  Now we
verify $\cl{\co{S}} \supseteq\tbg{k'}$.

Assume that there exists a vector
$u \in \tbg{k'}$ but
$u\not\in\cl{\co{S}}$. According to (\ref{existencefS}), there
exists a linear polynomial $\td{f}\in\RR[\wt{X}]$ with $\td{f}(0)=0$
such that $\td{f}(1,u)<0$ and $\td{f}(\td{x})>0$ on $\wt{S}$. Since
$\wt{S}$ satisfies PP-BDR property with order $k'$,
we have $\td{f}\in\mathcal{Q}_{k'}(\wt{G})$. Due to the fact that
$\td{f}(1,u)<0$, we derive that  $u\not\in\tbg{k'}$. This yields the
contradiction. Thus, we have $\cl{\co{S}}=\tbg{k'}$.
\end{proof}

We would like to point out that two conditions in
Assumption \ref{assumption} can not be dropped
in Theorem \ref{th::main1}.

\begin{example}\label{ex::notclosed}
Consider the semialgebraic
set $S:=\{(x_1,x_2)\in\RR^2\mid x_2-x_1^2\ge 0\}$. Clearly,
$\cl{\co{S}}=S$. We have
\[
\begin{aligned}
\wt{S}^{\text{o}}&=\{(x_0,x_1,x_2)\in\RR^3\mid x_0x_2-x_1^2\ge 0,\ x_0>0\},\\
\wt{S}^{\text{c}}&=\{(x_0,x_1,x_2)\in\RR^3\mid x_0x_2-x_1^2\ge 0,\ x_0\ge 0\}.
\end{aligned}
\]
It is easy to check that $\cl{\wt{S}^\text{o}}$ is convex. Define
 $\td{f}(\wt{X}):=X_0+X_2$, we have $\td{f}(\td{x})>0$ on
$\co{\cl{\wt{S}^{\text{o}}}}\backslash\{0\}$
which implies
$\co{\cl{\wt{S}^{\text{o}}}}$ is closed and pointed. However,
$\wt{S}^{\text{c}}\backslash\cl{\wt{S}^{\text{o}}}=\{(0, 0, x_2)\in\RR^3\mid
x_2<0\}\neq\emptyset$ means $S$ is not closed at $\infty$.  Let
\[
\wt{G}=\{X_0,\ X_0X_2-X_1^2, X_0^2+X_1^2+X_2^2-1,\ 1-X_0^2-X_1^2-X_2^2\}.
\]
We prove that $\wt{\tb}_k(\wt{G})=\RR^2 $ for every $k\in\N$ and $\cl{\co{S}}\neq\bigcap_{k=1}^\infty\wt{\tb}_k(\wt{G})=\RR^2$.

Assume $c_0X_0+c_1X_1+c_2X_2\in\Qk(\wt{G})$, then
\begin{equation}\label{eq::1}
c_0X_0+c_1X_1+c_2X_2=\td{\sigma}_0+\td{\sigma}_1X_0+\td{\sigma}_2(X_0X_2-X_1^2)
+\td{h}(X_0^2+X_1^2+X_2^2-1),
\end{equation}
where
$\td{\sigma}_i \in \Sigma^2, i=0,1,2$ and $\td{h}\in\RR[\wt{X}]$.
 By substituting $(X_0,X_1,X_2)=(0,0,\pm 1)$ in (\ref{eq::1}), we get $\pm
c_2=\td{\sigma}_0(0,0,\pm 1)\ge 0$ which implies $c_2=0$.
Assume $c_1\neq 0$ and let
\[
x^{(1)}=\left(1, \frac{-c_0+c_1}{c_1}, \frac{(-c_0+c_1)^2}{c_1^2}\right),\quad
x^{(2)}=\left(1, -\frac{c_0+c_1}{c_1}, \frac{(c_0+c_1)^2}{c_1^2}\right).
\]
Let  $c_2=0$ and substitute  $(X_0, X_1, X_2)$ by
$x^{(1)}/\Vert x^{(1)}\Vert_2$ and $x^{(2)}/\Vert x^{(2)}\Vert_2$ in
(\ref{eq::1}) respectively,
we get
\[
\begin{aligned}
c_1&=\Vert x^{(1)}\Vert_2\left(\td{\sigma}_0\left(\frac{x^{(1)}}{\Vert
x^{(1)}\Vert_2}\right)+\td{\sigma}_1\left(\frac{x^{(1)}}{\Vert
x^{(1)}\Vert_2}\right)\frac{1}{\Vert x^{(1)}\Vert_2}\right)\ge 0,\\
-c_1&=\Vert x^{(2)}\Vert_2\left(\td{\sigma}_0\left(\frac{x^{(2)}}{\Vert
x^{(2)}\Vert_2}\right)+\td{\sigma}_1\left(\frac{x^{(2)}}{\Vert
x^{(2)}\Vert_2}\right)\frac{1}{\Vert x^{(2)}\Vert_2}\right)\ge 0,\\
\end{aligned}
\]
which implies $c_1=0$.
It contradicts the assumption $c_1\neq 0$. Hence, we have $c_1=0$.
By the definition in
(\ref{eq::newtheta}), we get $\wt{\tb}_k(\wt{G})=\RR^2$ for every
$k\in\N$. Therefore, we conclude that the assumption of closedness
 of $S$  at $\infty$  can not be dropped in Theorem \ref{th::main1}. $\hfill\square$
\end{example}

\begin{example}\label{ex::notpointed}
Consider the set $S:=\{(x_1,x_2)\in\RR^2\mid x_2^3-x_1^2\ge 0\}$. We have
$\cl{\co{S}}=\{(x_1,x_2)\in\RR^2\mid x_2\ge 0\}$ and
\[
\begin{aligned}
\wt{S}^{\text{o}}&=\{(x_0,x_1,x_2)\in\RR^3\mid x_2^3-x_0x_1^2\ge 0,\ x_0>0\},\\
\wt{S}^{\text{c}}&=\{(x_0,x_1,x_2)\in\RR^3\mid x_2^3-x_0x_1^2\ge 0,\ x_0\ge 0\}.
\end{aligned}
\]
It can be verified that  $\wt{S}^{\text{c}}\backslash\wt{S}^{\text{o}}=\{(0, x_1, x_2)\in\RR^3\mid
x_2\ge 0\}$.
Using similar arguments in  {\bf Example \ref{ex::counterex} ({continued})},
we can show that $S$ is closed at $\infty$.
However,
$\lim_{\epsilon\rightarrow
0}(\epsilon, \pm 1, \sqrt[3]{\epsilon})=(0, \pm 1, 0)$ and  $(0, \pm 1,
0)\in\cl{\wt{S}^\text{o}}$,
which implies that
 $\co{\cl{\wt{S}^{\text{o}}}}$ is not pointed.

 Let
\[
\wt{G}=\{X_0,\ X_2^3-X_0X_2^2, X_0^2+X_1^2+X_2^2-1,\ 1-X_0^2-X_1^2-X_2^2\},
\]
we show
$\wt{\tb}_k(\wt{G})=\RR^2$ for every $k\in\N$.

Assume
$c_0X_0+c_1X_1+c_2X_2\in\Qk(\wt{G})$, then
\begin{equation}\label{eq::2}
c_0X_0+c_1X_1+c_2X_2=\td{\sigma}_0+\td{\sigma}_1X_0+\td{\sigma}_2(X_2^3-X_0X_1^2)
+\td{h}(X_0^2+X_1^2+X_2^2-1),
\end{equation}
$\td{\sigma}_i \in \Sigma^2, i=0,1,2$ and $\td{h}\in\RR[\wt{X}]$.
Substituting
$(X_0,X_1,X_2)=(0, \pm 1, 0)$ in (\ref{eq::2}), we derive $c_1=0$.
Substituting
$(X_0,X_1,X_2)=(0, \pm 1, X_2)$ in (\ref{eq::2}), we have
\begin{equation}\label{eq::3}
c_2X_2=\td{\sigma}_0(0, \pm 1, X_2)+\td{\sigma}_2(0, \pm 1, X_2)X_2^3+\td{h}(0,
\pm 1, X_2)X_2^2.
\end{equation}
It is clear that $\td{\sigma}_0(0, \pm 1, X_2)$  can not have
a nonzero constant term. Hence,  the right side of the equation (\ref{eq::3}) is
divisible by $X_2^2$,  which is only possible when $c_2=0$.
By the definition of the theta body,
we derive $\wt{\tb}_k(\wt{G})=\RR^2$ for every $k\in\N$. This shows that
the assumption of pointedness of $\co{\cl{\wt{S}^{\text{o}}}}$ can not be
dropped in Theorem \ref{th::main1}. $\hfill\square$
\end{example}

Since the PP-BDR property is not generally true, similar to  Lemma 5 and Theorem 6 in
\cite[Section 2.5]{convexsetLasserre},
we
give an approximate semidefinite representation of $\cl{\co{S}}$.
For a radius $r\in\RR$, let $\mathbf{B}_r:=\{x\in\RR^n\mid\Vert
x\Vert_2\le r\}$.

\begin{lemma}\label{lem::lf}
Let  $\Omega\subset\RR^n$ be a closed convex set and let $r>0,
\epsilon>0$ be fixed.  Assume
that $(\Omega+\epsilon\mathbf{B}_1)\cap\mathbf{B}_r\neq\emptyset$ and
$u\in\mathbf{B}_r\backslash(\Omega+\epsilon\mathbf{B}_1)$, then
there exists a unit vector $\mathbf{f}\in\RR^n$ and a scalar $f^*$
with $\vert f^*\vert\le 3r+\epsilon$ such that
\begin{equation}\label{eq::lf}
\mathbf{f}^Tx\ge f^*\ \ \forall x\in\Omega\quad\text{and}\quad
\mathbf{f}^Tu<f^*-\epsilon.
\end{equation}
\end{lemma}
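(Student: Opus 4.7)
The plan is to build the separating hyperplane by projecting $u$ onto $\Omega$ itself (rather than onto $\Omega+\epsilon\mathbf{B}_1$), because this directly produces the margin $\epsilon$ we need on the left-hand side while keeping $f^*$ easy to control.

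First I would set $v$ to be the (unique) Euclidean projection of $u$ onto the closed convex set $\Omega$, let $d:=\|u-v\|_2$, and observe that the hypothesis $u\notin\Omega+\epsilon\mathbf{B}_1$ is precisely the statement $d>\epsilon$. I would then define
\[
\mathbf{f}:=\frac{v-u}{\|v-u\|_2},\qquad f^*:=\mathbf{f}^Tv.
\]
By construction $\mathbf{f}$ is a unit vector. The standard supporting-hyperplane property of the projection onto a closed convex set yields $\mathbf{f}^T(x-v)\ge 0$ for every $x\in\Omega$, i.e.\ $\mathbf{f}^Tx\ge f^*$ for all $x\in\Omega$. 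On the other hand a direct computation gives $\mathbf{f}^Tu=\mathbf{f}^Tv-d=f^*-d<f^*-\epsilon$, which is the second inequality in \reff{eq::lf}.

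It remains only to bound $|f^*|$. Since $u\in\mathbf{B}_r$ and $\mathbf{f}$ is a unit vector, $|\mathbf{f}^Tu|\le r$, and from $f^*=\mathbf{f}^Tu+d$ I get $|f^*|\le r+d$. To bound $d$, I use the non-emptiness hypothesis: pick any $z\in(\Omega+\epsilon\mathbf{B}_1)\cap\mathbf{B}_r$ and write $z=x_0+\epsilon b_0$ with $x_0\in\Omega$ and $\|b_0\|_2\le 1$, so $\|x_0\|_2\le r+\epsilon$. Since $v$ is the \emph{nearest} point of $\Omega$ to $u$,
\[
d=\|u-v\|_2\le \|u-x_0\|_2\le \|u\|_2+\|x_0\|_2\le r+(r+\epsilon)=2r+\epsilon.
\]
Combining the two estimates yields $|f^*|\le r+d\le 3r+\epsilon$, as required.

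The argument is essentially a clean application of the projection theorem, so I do not anticipate a serious obstacle; the one spot that could be mishandled is the choice of which set to project onto. Projecting onto $\Omega+\epsilon\mathbf{B}_1$ would give strict separation but only with margin $\mathrm{dist}(u,\Omega+\epsilon\mathbf{B}_1)$, which could be arbitrarily small and would not deliver the required gap of $\epsilon$ between $\mathbf{f}^Tu$ and $f^*$. Projecting onto $\Omega$ bypasses this issue because the hypothesis $u\notin\Omega+\epsilon\mathbf{B}_1$ is exactly a quantitative lower bound on $d$; thereafter everything is a two-line triangle-inequality computation.
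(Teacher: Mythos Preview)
Your proof is correct and follows essentially the same approach as the paper: both project $u$ onto $\Omega$, take $\mathbf{f}=(v-u)/\|v-u\|$ and $f^*=\mathbf{f}^Tv$, and bound $|f^*|$ via the triangle inequality using a point of $\Omega$ within distance $r+\epsilon$ of the origin. The only cosmetic difference is that the paper cites Lasserre's Lemma~5 for the separation inequalities while you write them out directly, and the paper bounds $|f^*|\le\|v\|_2$ whereas you bound $|f^*|\le|\mathbf{f}^Tu|+d$, but these lead to the same estimate.
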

\begin{proof}
Since $\Omega$ is closed and convex, there is a unique projection $u^*$ of $u$
on $\Omega$. Let $\mathbf{f}:=(u^*-u)/\Vert u-u^*\Vert_2$ and
$f^*:=\mathbf{f}^Tu^*$. Using the same arguments in
the proof of \cite[Lemma 5]{convexsetLasserre}, we conclude that $(\ref{eq::lf})$.
Moreover, let
$\bar{u}\in\left(\Omega+\epsilon\mathbf{B}_1\right)\cap\mathbf{B}_r$,
 there exists $\hat{u}\in\Omega$ such that
$\Vert\bar{u}-\hat{u}\Vert_2\le\epsilon$. Hence, we have
\[
\begin{aligned}
\vert f^*\vert&\le\Vert\mathbf{f}\Vert_2\Vert u^*\Vert_2 \\
&\le\Vert u\Vert_2+\Vert u^*-u\Vert_2\\
&\le\Vert u\Vert_2+\Vert \hat{u}-u\Vert_2\\
&\le\Vert u\Vert_2+\Vert \hat{u}-\bar{u}\Vert_2+\Vert\bar{u}-u\Vert_2\\
&\le r+\epsilon+2r=3r+\epsilon
\end{aligned}
\]
\end{proof}

\begin{theorem}\label{th::main3}
Let $S\in\RR^n$ be a semialgebraic set defined as in $(\ref{eq::Snew})$.
Suppose  that Assumption \ref{assumption} holds, then for every fixed
$\epsilon>0$ and $r>0$
with $\cl{\co{S}}\cap\mathbf{B}_r\neq\emptyset$,
there exists an integer $k_{r,\epsilon}\in\N$ such that
\[
\cl{\co{S}}\cap\mathbf{B}_r\subseteq\tbg{k_{r,\epsilon}}\cap\mathbf{B}_r\subseteq
\left(\cl{\co{S}}+\epsilon\mathbf{B}_1\right)\cap\mathbf{B}_r
\] holds.
\end{theorem}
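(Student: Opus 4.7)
The first inclusion $\cl{\co{S}} \cap \mathbf{B}_r \subseteq \tbg{k_{r,\epsilon}} \cap \mathbf{B}_r$ is automatic from the containment $\cl{\co{S}} \subseteq \tbg{k}$ already established in Theorem \ref{th::main1}, so only the right-hand inclusion requires work. My plan is to argue this inclusion by contradiction, leveraging the nesting $(\ref{eq::subs})$, the closedness of each $\tbg{K}$ as an intersection of closed half-spaces, and the asymptotic identity $\cl{\co{S}} = \bigcap_{k \ge 1} \tbg{k}$ furnished by Theorem \ref{th::main1}.

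Suppose no such $k_{r,\epsilon}$ exists. Then for every $k \in \N$ I may pick a point $u_k \in \tbg{k} \cap \mathbf{B}_r$ with $u_k \notin \cl{\co{S}} + \epsilon \mathbf{B}_1$, which forces $d(u_k, \cl{\co{S}}) \ge \epsilon$. Since $\mathbf{B}_r$ is compact, extract a convergent subsequence with limit $u_\infty \in \mathbf{B}_r$. Continuity of the distance function gives $d(u_\infty, \cl{\co{S}}) \ge \epsilon > 0$, so $u_\infty \notin \cl{\co{S}}$. By Theorem \ref{th::main1} there is some $K \in \N$ with $u_\infty \notin \tbg{K}$. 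But the nesting $(\ref{eq::subs})$ gives $u_k \in \tbg{k} \subseteq \tbg{K}$ for every $k \ge K$, and since $\tbg{K}$ is closed, passing to the limit yields $u_\infty \in \tbg{K}$, a contradiction. This contradiction delivers the desired uniform $k_{r,\epsilon}$.

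Lemma \ref{lem::lf} enables an alternative, more explicit route in the spirit of \cite[Theorem 6]{convexsetLasserre}: for each offending $u_k$ it produces a unit normal $\mathbf{f}_k$ and intercept $f_k^*$ with $|f_k^*| \le 3r + \epsilon$, and the homogenization of the resulting affine form, corrected by a small multiple of the pointedness witness $\td{g}$ produced by Theorem \ref{th::pointedcone}, becomes strictly positive on the compact set $\wt{S}$ and therefore lies in some $\mathcal{Q}_K(\wt{G})$ by Putinar's Positivstellensatz. The principal obstacle in that quantitative approach is promoting the pointwise degree bound of Putinar to a \emph{uniform} bound over the compact parameter set $\bS^{n-1} \times [-(3r+\epsilon),\, 3r+\epsilon]$ of admissible separating forms. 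The contradiction argument above sidesteps this issue altogether by pushing the compactness onto the sequence of witnesses $u_k$ and letting the closedness of $\tbg{K}$ do the work, so I would present that version as the main proof.
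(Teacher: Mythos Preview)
Your compactness-and-contradiction argument is correct and is genuinely different from the paper's proof. The paper follows precisely the ``alternative, more explicit route'' you sketch in your final paragraph: it applies Lemma \ref{lem::lf} to an arbitrary $u\in\mathbf{B}_r\setminus(\cl{\co{S}}+\epsilon\mathbf{B}_1)$, homogenizes the separating affine form, perturbs by $\epsilon/\sqrt{1+r^2}$ times the pointedness witness $\td g$, and observes that the resulting $\td p$ has coefficient norm bounded in terms of $r,\epsilon$ and satisfies $\td p\ge c\epsilon/\sqrt{1+r^2}$ on $\wt S$. The obstacle you flag---uniformity of the Putinar degree over the compact family of separators---is exactly what the paper resolves by invoking the Nie--Schweighofer complexity bound \cite{NieSchweighofer}, which controls the Putinar degree in terms of the coefficient norm and the minimum of $\td p$ on $\wt S$. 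Your argument is shorter and needs nothing beyond Theorem \ref{th::main1} and the closedness of $\tbg{K}$; the paper's argument is more constructive and in principle yields an explicit (if enormous) bound on $k_{r,\epsilon}$.
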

\begin{proof}
By Theorem \ref{th::main1}, we only need to prove
\begin{equation}\label{eq::uc}
\tbg{k_{r,\epsilon}}\cap\mathbf{B}_r\subseteq
\left(\cl{\co{S}}+\epsilon\mathbf{B}_1\right)\cap\mathbf{B}_r.
\end{equation}

Without loss of generality, we assume
$\left(\cl{\co{S}}+\epsilon\mathbf{B}_1\right)\cap\mathbf{B}_r\neq\mathbf{B}_r$
and let $u\in\mathbf{B}_r\backslash\left(\cl{\co{S}}+\epsilon\mathbf{B}_1\right)$
be fixed.
By Lemma \ref{lem::lf}, there exists $\mathbf{f}\in\RR^n$ with
$\Vert\mathbf{f}\Vert_2=1$ and a scalar $f^*$ with $\vert f^*\vert\le
3r+\epsilon$ such that the following condition holds
\[
\mathbf{f}^Tx\ge f^* \quad \text{on} \quad
\cl{\co{S}}\quad\text{and}\quad
\mathbf{f}^Tu<f^*-\epsilon.
\]
Define $\td{f}(\wt{X}):=\sum_{i=0}^n f_iX_i-f^*X_0\in\RR[\wt{X}]$, by Corollary
\ref{cor::eq}, $\wt{f}(\td{x})\ge 0$ on
$\co{\cl{\wt{S}^{\text{o}}}}$.
 Since $\co{\cl{\wt{S}^{\text{o}}}}$ is closed and pointed, by
Theorem \ref{th::pointedcone}, there exists a polynomial
$\td{g}(\wt{X})=\sum_{i=0}^ng_iX_i\in\P[\wt{X}]_1$ such
that
$\Vert \td{\mathbf{g}} \Vert_2=1$
 and $\td{g}(\td{x})>0$ on
$\co{\cl{\wt{S}^{\text{o}}}}$. We define a new polynomial
$$\td{p}(\wt{X}):=\frac{\epsilon}{\sqrt{1+r^2}}\td{g}(\wt{X})+\td{f}(\wt{X})\in\RR[\wt{X}],$$
then
\[
\begin{aligned}
\Vert\td{\mathbf{p}}\Vert_2&\le
\frac{\epsilon}{\sqrt{1+r^2}}\Vert\td{\mathbf{g}}\Vert_2+\Vert\wt{\mathbf{f}}\Vert_2\\
&\le \frac{\epsilon}{\sqrt{1+r^2}}+1+3r+\epsilon,
\end{aligned}
\]
and
\[
\begin{aligned}
\td{p}(1,u)&=\frac{\epsilon}{\sqrt{1+r^2}}\td{g}(1,u)+\td{f}(1,u)\\
&\le
\frac{\epsilon}{\sqrt{1+r^2}}\Vert\td{\mathbf{g}}\Vert_2\Vert(1,u)\Vert_2+\mathbf{f}^Tu-f^*\\
&<\epsilon-\epsilon=0.
\end{aligned}
\]
Let
\[
c:=\min\left\{\td{g}(\td{x})\ \Big\vert\
\td{x}\in\cl{\wt{S}^{\text{o}}},\ \Vert\td{x}\Vert_2=1\right\},
\]
then $c>0$ is well defined and $\td{p}(\td{x})\ge
c\epsilon/\sqrt{1+r^2}>0$ on
$\cl{\wt{S}^{\text{o}}}\bigcap\{\Vert\td{x}\Vert_2=1\}$.
As $S$ is closed at $\infty$, we have $\td{p}(\td{x})\ge
c\epsilon/\sqrt{1+r^2}>0$ on $\wt{S}$. Since $\wt{S}$  is compact,
by Putinar's Positivstellensatz
\cite{Putinar1993} and \cite[Theorem 6]{NieSchweighofer}, there
exists an integer $k_{r,\epsilon}\in\N$ depending only on $r$ and
$\epsilon$ such that
$\td{p}\in\mathcal{Q}_{k_{r,\epsilon}}(\wt{G})$.
From  $\td{p}(1,u) <0$, we derive
$u\not\in\tbg{k_{r,\epsilon}}$.  This implies   (\ref{eq::uc}).
\end{proof}

\subsection{Spectrahedral approximations of $\cl{\co{S}}$}
In order to fulfill computations of $\cl{\co{S}}$ via semidefinite programming,
we study an alternative description of $\wt{\tb}_k(\wt{G})$ in a dual view and
establish the connection between them. In the following, we consider moment
sequences $y$ of real numbers indexed by $(n+1)$-tuple $\alpha:=(\alpha_0,
\alpha_1,\ldots,\alpha_n)\in\N^{n+1}$. Each $y$ defines a Riesz functional
$\mathscr{L}_y$ on $\RR[\wt{X}]$. Recall that
$\td{s}(k)={n+k+1\choose n+1}$ and $k_j=\lceil\deg{g_j}/2\rceil$. For every
$k\in\N$, define
\begin{equation}\label{eq::omega}
\Omega_k(\wt{G}):=\left\{
\begin{array}{c|c}
x\in\RR^n&
\begin{aligned}
&\exists y\in\RR^{\td{s}(2k)},\ \text{s.t.}\
\mathscr{L}_y{(X_0)}=1,\\
&\mathscr{L}_y(X_i)=x_i,\ i=1,\ldots,n,\\
&M_{k-1}(X_0y)\succeq 0,\ M_{k-1}((\Vert\wt{X}\Vert_2^2-1)y)=0,\\
&M_k(y)\succeq 0,\ M_{k-k_j}(\td{g}_jy)\succeq 0,\ j=1,\ldots,m\\
\end{aligned}
\end{array}
\right\}.
\end{equation}

\begin{theorem}\label{th::omegatheta}
We have $\cl{\co{S}}\subseteq\cl{\og{k}}\subseteq\tbg{k}$ for every $k\in\N$.
\end{theorem}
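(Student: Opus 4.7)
The plan is to prove the two inclusions separately, exploiting the fact that $\tbg{k}$ is closed (as an intersection of closed halfspaces) and that $\og{k}$ is convex (as the projection of a spectrahedron onto the coordinates corresponding to $x_1,\ldots,x_n$). Hence for the first inclusion it suffices to show $S\subseteq\og{k}$, and for the second it suffices to show $\og{k}\subseteq\tbg{k}$.

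\textbf{Step 1: $S\subseteq\og{k}$, hence $\cl{\co{S}}\subseteq\cl{\og{k}}$.} Given $x\in S$, set $\hat{x}:=(1,x)/\|(1,x)\|_2\in\RR^{n+1}$. Since $g_j(x)\ge 0$ and $\tilde{g}_j$ is homogeneous, $\tilde{g}_j(\hat{x})\ge 0$; clearly $\hat{x}_0>0$ and $\|\hat{x}\|_2=1$, so $\hat{x}\in\wt{S}$. Define the atomic (weighted) moment sequence $y_\alpha:=\lambda\hat{x}^{\alpha}$ with $\lambda:=\sqrt{1+\|x\|_2^2}$. Then $\mathscr{L}_y(X_0)=\lambda\hat{x}_0=1$ and $\mathscr{L}_y(X_i)=\lambda\hat{x}_i=x_i$. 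For any polynomial $q\in\RR[\wt{X}]$ of degree at most $k$, by $(\ref{eq::M})$,
\[
\mathbf{q}^{T}M_k(y)\mathbf{q}=\mathscr{L}_y(q^2)=\lambda q(\hat{x})^2\ge 0,
\]
and the same evaluation trick gives $M_{k-1}(X_0 y)\succeq 0$ (using $\hat{x}_0\ge 0$) and $M_{k-k_j}(\tilde{g}_j y)\succeq 0$ (using $\tilde{g}_j(\hat{x})\ge 0$). Finally $(\|\wt{X}\|_2^2-1)$ vanishes at $\hat{x}$, so $M_{k-1}((\|\wt{X}\|_2^2-1)y)=0$. Thus $x\in\og{k}$, and taking convex hulls and closures gives $\cl{\co{S}}\subseteq\cl{\og{k}}$.

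\textbf{Step 2: $\og{k}\subseteq\tbg{k}$, hence $\cl{\og{k}}\subseteq\tbg{k}$.} Let $x\in\og{k}$ with witness $y$, and take any $\tilde{l}\in \Qk(\wt{G})\cap\P[\wt{X}]_1$. Since $\tilde{l}$ is homogeneous of degree one, the moment conditions give
\[
\mathscr{L}_y(\tilde{l})=\sum_{i=0}^n l_i\,\mathscr{L}_y(X_i)=l_0+\sum_{i=1}^n l_i x_i=\tilde{l}(1,x).
\]
By definition of $\Qk(\wt{G})$, we may write
\[
\tilde{l}=\sigma_0+\sigma_{X}X_0+\sum_{j=1}^m \sigma_j\tilde{g}_j+(\sigma_{+}-\sigma_{-})(\|\wt{X}\|_2^2-1),
\]
with each $\sigma_{\bullet}\in\Sigma^2$ and $\deg(\sigma_0)\le 2k$, $\deg(\sigma_X)\le 2(k-1)$ (since $\sigma_X$ is SOS and $\deg(\sigma_X X_0)\le 2k$), $\deg(\sigma_j\tilde{g}_j)\le 2k$, and $\deg((\sigma_{+}-\sigma_{-})(\|\wt{X}\|_2^2-1))\le 2k$. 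Applying $\mathscr{L}_y$ term by term and invoking $(\ref{eq::M})$ with each PSD localizing matrix yields $\mathscr{L}_y(\sigma_0)\ge 0$, $\mathscr{L}_y(\sigma_X X_0)\ge 0$ and $\mathscr{L}_y(\sigma_j\tilde{g}_j)\ge 0$; the last term vanishes because $M_{k-1}((\|\wt{X}\|_2^2-1)y)=0$ implies $\mathscr{L}_y((\|\wt{X}\|_2^2-1)q)=0$ for every $q\in\RR[\wt{X}]_{2(k-1)}$. Therefore $\tilde{l}(1,x)=\mathscr{L}_y(\tilde{l})\ge 0$, so $x\in\tbg{k}$. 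Since $\tbg{k}$ is closed, we conclude $\cl{\og{k}}\subseteq\tbg{k}$.

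The two steps are both essentially bookkeeping; the main thing to be careful with is matching the degree bounds in the SOS decomposition of $\tilde{l}$ with the sizes of the localizing moment matrices in $(\ref{eq::omega})$, in particular remembering that SOS polynomials have even degree (so $\deg(\sigma_X)\le 2(k-1)$ rather than $2k-1$) and that the equality constraint $M_{k-1}((\|\wt{X}\|_2^2-1)y)=0$ precisely accounts for both generators $\|\wt{X}\|_2^2-1$ and $1-\|\wt{X}\|_2^2$ in $\wt{G}$ simultaneously.
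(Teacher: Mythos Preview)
Your proof is correct and follows essentially the same approach as the paper: the paper also reduces to showing $\co{S}\subseteq\og{k}\subseteq\tbg{k}$ (using that $\tbg{k}$ is closed and $\og{k}$ is convex), constructs the same scaled atomic moment sequence $y_\alpha=\td{u}^\alpha/\td{u}_0$ at $\td{u}=(1,u)/\|(1,u)\|_2$ for the first inclusion (your $\lambda$ is exactly $1/\hat{x}_0$), and for the second inclusion applies $\mathscr{L}_y$ term-by-term to the $\Qk(\wt{G})$ decomposition of $\td{l}$. Your write-up is in fact slightly more careful than the paper's about the degree bookkeeping needed to match $\sigma_\pm$ with the equality constraint $M_{k-1}((\|\wt{X}\|_2^2-1)y)=0$.
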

\begin{proof}
Since $\tbg{k}$ is closed, it is sufficient to prove
\[
\co{S}\subseteq\og{k}\subseteq\tbg{k}\quad\text{for each } k\in\N.
\]
Fixing a vector
$u\in S$, let $\td{u}:=(1, u)/\Vert(1, u)\Vert_2\in\wt{S}$ and
$y:=\{y_{\alpha}\}_{\vert\alpha\vert\le 2k}$, where
$y_{\alpha}=\td{u}^{\alpha}/\td{u}_0$ for $\alpha\in\N^{n+1}$.
 It is quite  straightforward to show that  $\mathscr{L}_y{(X_0)}=1$,
$\mathscr{L}_y(X_i)=u_i,\ i=1,\ldots,n$ and
\[
\begin{aligned}
&\left\langle\mathbf{w}, M_{k}(y)\mathbf{w}\right\rangle=\frac{1}{\td{u}_0}
w^2(\td{u})\ge 0,\ \forall w\in\RR[\wt{X}]_{k},\\
&\left\langle\mathbf{v}, M_{k-1}(X_0y)\mathbf{v}\right\rangle=\frac{1}{\td{u}_0}
\td{u}_0v^2(\td{u})=v^2(\td{u})\ge 0,\ \forall v\in\RR[\wt{X}]_{k-1},\\
&\left\langle\mathbf{p},
M_{k-1}((\Vert\wt{X}\Vert_2^2-1)y)\mathbf{p}\right\rangle=\frac{1}{\td{u}_0}
p^2(\td{u})\left(\Vert\td{u}\Vert_2^2-1\right)=0,\ \forall p\in\RR[\wt{X}]_{k-1},\\
&\left\langle\mathbf{q},
M_{k-k_j}(\td{g}_jy)\mathbf{q}\right\rangle=\frac{1}{\td{u}_0}
q^2(\td{u})\td{g}_j(\td{u})\ge 0,\ \forall q\in\RR[\wt{X}]_{k-k_j},\ j=1,\ldots,m.
\end{aligned}
\]
Therefore, we derive $u\in\og{k}$ and $S\subseteq\og{k}$.
Since $\og{k}$ is convex, it is clear that  we have   $\co{S}\subseteq\og{k}$.

For  a given $v\in\og{k}$, let $y\in\RR^{\td{s}(2k)}$ be its associated
moment sequence defined in (\ref{eq::omega}).  For every
$\td{l} \in \Qk(\wt{G})\cap\P[\wt{X}]_1$, we have the representation
\[
\td{l}(\wt{X})=\td{\sigma}+\td{\sigma}_0X_0+\td{h}(\Vert\wt{X}\Vert_2^2-1)+
\sum_{j=1}^m\td{\sigma}_j\td{g}_j,
\]
where $\td{\sigma}_j$'s are SOS and each term in the summation has degree $\le 2k$.
We have
\[
\td{l}(1,
v)=\mathscr{L}_y(\td{l})=\mathscr{L}_y\left(\td{\sigma}+\td{\sigma}_0X_0+
\td{h}(\Vert\wt{X}\Vert_2^2-1)+\sum_{j=1}^m\td{\sigma}_j\td{g}_j\right)
\]
By $(\ref{eq::M})$ and $(\ref{eq::omega})$, we obtain $\td{l}(1, v)\ge 0$, which implies
$v\in\tbg{k}$. The proof is completed.
\end{proof}

\paragraph{\bf Example \ref{ex::counterex} (continued)}
 Using the software package Bermeja \cite{Bermeja}, we draw the
third order spectrahedron $\Omega_3(\wt{G})$. As  shown in Figure \ref{fig::counteroemga},
our modified Lasserre's relaxation
$\Omega_3(\wt{G})$  is a very tight approximation of
$\cl{\co{S}}$. $\hfill\square$
\begin{figure}
\includegraphics[width=0.4\textwidth]{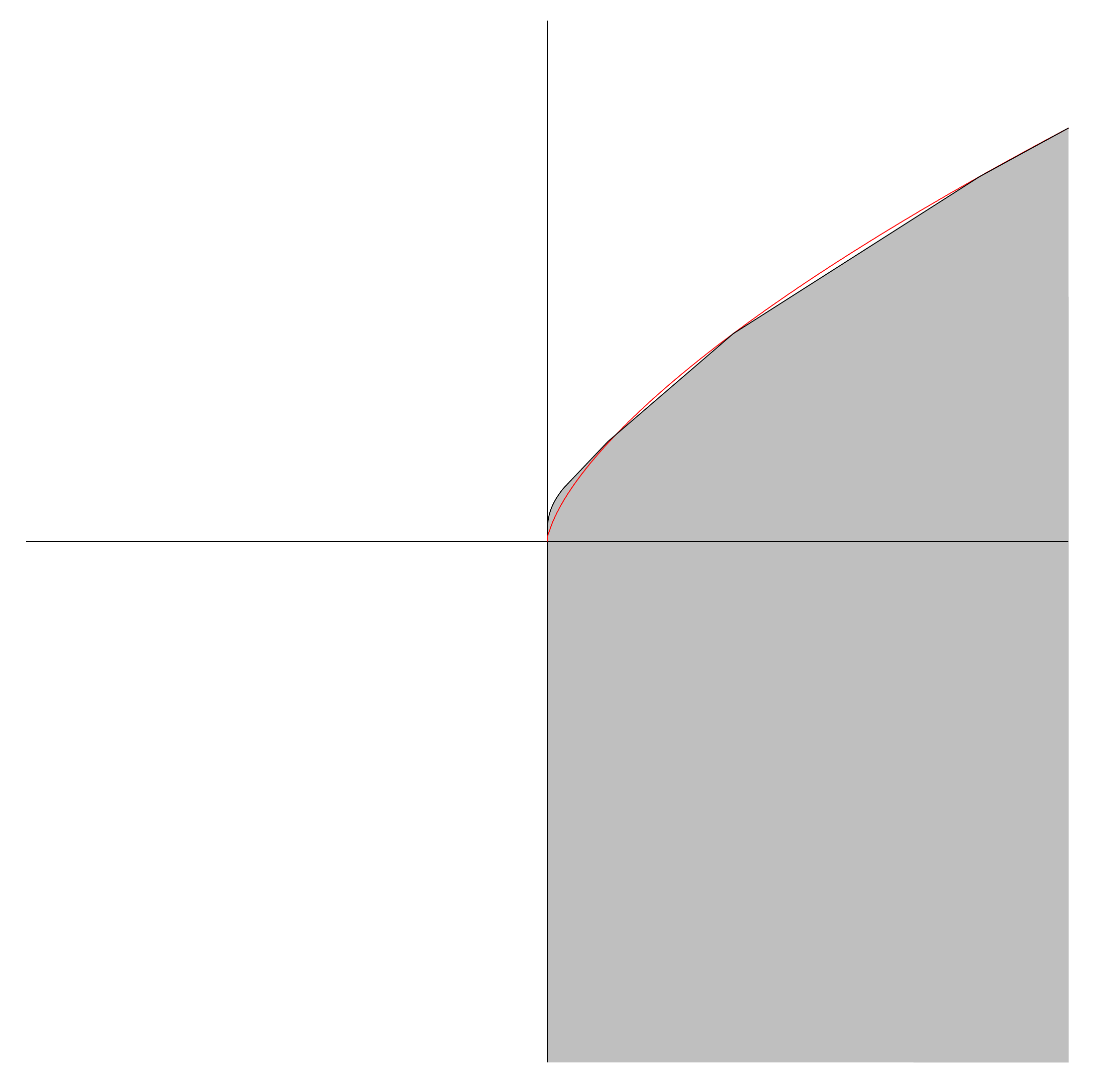}
\caption{The spectrahedral approximation $\Omega_3(\wt{G})$ (shown shaded) of
$\cl{\co{S}}$ in Example \ref{ex::counterex}.}\label{fig::counteroemga}
\end{figure}

The following results are obtained by replacing  $\tbg{k}$ by $\cl{\og{k}}$ in
Theorem \ref{th::main1}, Corollary \ref{cor::main} and Theorem \ref{th::main3}.
\begin{cor}\label{cor::mainomega}
Let $S\in\RR^n$ be a semialgebraic set defined as in $(\ref{eq::Snew})$.
Suppose that Assumption \ref{assumption} is satisfied, then
\begin{enumerate}[1.]
\item $\cl{\co{S}}\subseteq\og{k}$ for every $k\in\N$ and
$\cl{\co{S}}=\bigcap_{k=1}^\infty\cl{\og{k}}$.
\item If PP-BDR property holds for $\wt{S}$ with order $k'$, then
$\cl{\co{S}}=\cl{\og{k'}}$.
\item For every fixed $\epsilon>0$ and $r>0$ with
$\cl{\co{S}}\cap\mathbf{B}_r\neq\emptyset$, there exists an integer
$k_{r,\epsilon}\in\N$ such that
\[
\cl{\co{S}}\cap\mathbf{B}_r\subseteq\cl{\og{k_{r,\epsilon}}}\cap\mathbf{B}_r\subseteq
\left(\cl{\co{S}}+\epsilon\mathbf{B}_1\right)\cap\mathbf{B}_r
\] holds.
\end{enumerate}
\end{cor}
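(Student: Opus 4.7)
The plan is to derive all three assertions as direct sandwich consequences of Theorem \ref{th::omegatheta}, which supplies the crucial chain of inclusions
\[
\cl{\co{S}}\subseteq\cl{\og{k}}\subseteq\tbg{k}\qquad\text{for every }k\in\N,
\]
combined with the corresponding results already proved for the theta body hierarchy $\tbg{k}$. No new convex-geometry or Positivstellensatz work is needed; the whole corollary is formal once Theorem \ref{th::omegatheta} is in hand, which is why the authors announce it as immediate.

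For part (1), I would first record that $\cl{\co{S}}\subseteq\cl{\og{k}}$ is half of Theorem \ref{th::omegatheta}. Intersecting the chain over all $k$ yields
\[
\cl{\co{S}}\;\subseteq\;\bigcap_{k=1}^{\infty}\cl{\og{k}}\;\subseteq\;\bigcap_{k=1}^{\infty}\tbg{k}.
\]
Theorem \ref{th::main1} identifies the rightmost set with $\cl{\co{S}}$ under Assumption \ref{assumption}, so both inclusions collapse to equalities, giving the desired $\cl{\co{S}}=\bigcap_{k=1}^{\infty}\cl{\og{k}}$.

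For part (2), assuming PP-BDR holds for $\wt{S}$ with order $k'$, Corollary \ref{cor::main} gives $\cl{\co{S}}=\tbg{k'}$. Applying the sandwich at the single index $k=k'$,
\[
\cl{\co{S}}\;\subseteq\;\cl{\og{k'}}\;\subseteq\;\tbg{k'}\;=\;\cl{\co{S}},
\]
forces $\cl{\co{S}}=\cl{\og{k'}}$. For part (3), fix $r>0$ and $\epsilon>0$ with $\cl{\co{S}}\cap\mathbf{B}_r\neq\emptyset$. Theorem \ref{th::main3} supplies an integer $k_{r,\epsilon}\in\N$ with
\[
\cl{\co{S}}\cap\mathbf{B}_r\;\subseteq\;\tbg{k_{r,\epsilon}}\cap\mathbf{B}_r\;\subseteq\;\bigl(\cl{\co{S}}+\epsilon\mathbf{B}_1\bigr)\cap\mathbf{B}_r.
\]
Inserting $\cl{\og{k_{r,\epsilon}}}$ between $\cl{\co{S}}$ and $\tbg{k_{r,\epsilon}}$ via Theorem \ref{th::omegatheta} yields the claimed two-sided inclusion for $\cl{\og{k_{r,\epsilon}}}\cap\mathbf{B}_r$.

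The only genuine work in the corollary lives inside Theorem \ref{th::omegatheta}; the main obstacle there (which we may invoke) is verifying that each $v\in\og{k}$ satisfies $\td{l}(1,v)\ge 0$ for all $\td{l}\in\Qk(\wt{G})\cap\P[\wt{X}]_1$, which is handled by expanding $\td{l}$ into its quadratic-module representation and applying the moment-matrix identity \eqref{eq::M} to each summand. Given that, the corollary itself is purely formal chain-chasing and does not require any additional estimate or separation argument beyond those already deployed in Theorems \ref{th::main1} and \ref{th::main3}.
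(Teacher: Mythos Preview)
Your proposal is correct and matches the paper's own approach exactly: the paper's proof consists of the single sentence ``It is straightforward to verify these results via the preceding theorem,'' where the preceding theorem is precisely Theorem~\ref{th::omegatheta}, and your sandwich argument spelling out how Theorems~\ref{th::main1}, \ref{th::main3} and Corollary~\ref{cor::main} combine with that inclusion chain is the intended derivation. The only minor remark is that the first claim of part~(1) as stated is $\cl{\co{S}}\subseteq\og{k}$ (without closure on the right), which Theorem~\ref{th::omegatheta} does not literally provide; you tacitly work with $\cl{\og{k}}$ throughout, which is what is actually needed for the rest of the corollary and is all the paper's one-line proof can justify as well.
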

\begin{proof}
 It is straightforward to verify these results via the preceding theorem.
\end{proof}

\begin{example}\label{ex::ex2}
Consider the following semialgebraic set
\[
S:=\{(x_1,x_2)\in\RR^2\mid x_1^3-x_2^2-x_1+1=0,\ x_2\ge 0\},
\]
which is the red curve shown in Figure \ref{fig::ex2}.  We have
\[
\begin{aligned}
\wt{S}^\text{o}&=\{(x_0,x_1,x_2)\in\RR^3\mid x_1^3-x_0x_2^2-x_0^2x_1+x_0^3=0,\
x_2\ge 0,\ x_0> 0\},\\
\wt{S}^\text{c}&=\{(x_0,x_1,x_2)\in\RR^3\mid x_1^3-x_0x_2^2-x_0^2x_1+x_0^3=0,\
x_2\ge 0,\ x_0\ge 0\}.
\end{aligned}
\]
Clearly, $\wt{S}^\text{c}\backslash{\wt{S}^\text{o}}
=\{(0,0,x_2)\in\RR^3\mid x_2\ge 0\}$.
It can be
verified that
$\wt{S}^\text{c}=\cl{\wt{S}^\text{o}}$,
i.e., $S$ is closed at $\infty$. Since $X_0+X_2>0$ on
$\cl{\wt{S}^\text{o}}\backslash\{0\}$, we have $X_0+X_2>0$ on
$\co{\cl{\wt{S}^\text{o}}}\backslash\{0\}$ and thus,
by Theorem \ref{th::pointedcone}, $\co{\cl{\wt{S}^\text{o}}}$
is closed and pointed.
Hence, Assumption \ref{assumption} holds for $S$. The third projected
spectrahedron $\og{3}$ is depicted (shaded) in Figure \ref{fig::ex2}.
$\hfill\square$
\begin{figure}
\includegraphics[width=0.4\textwidth]{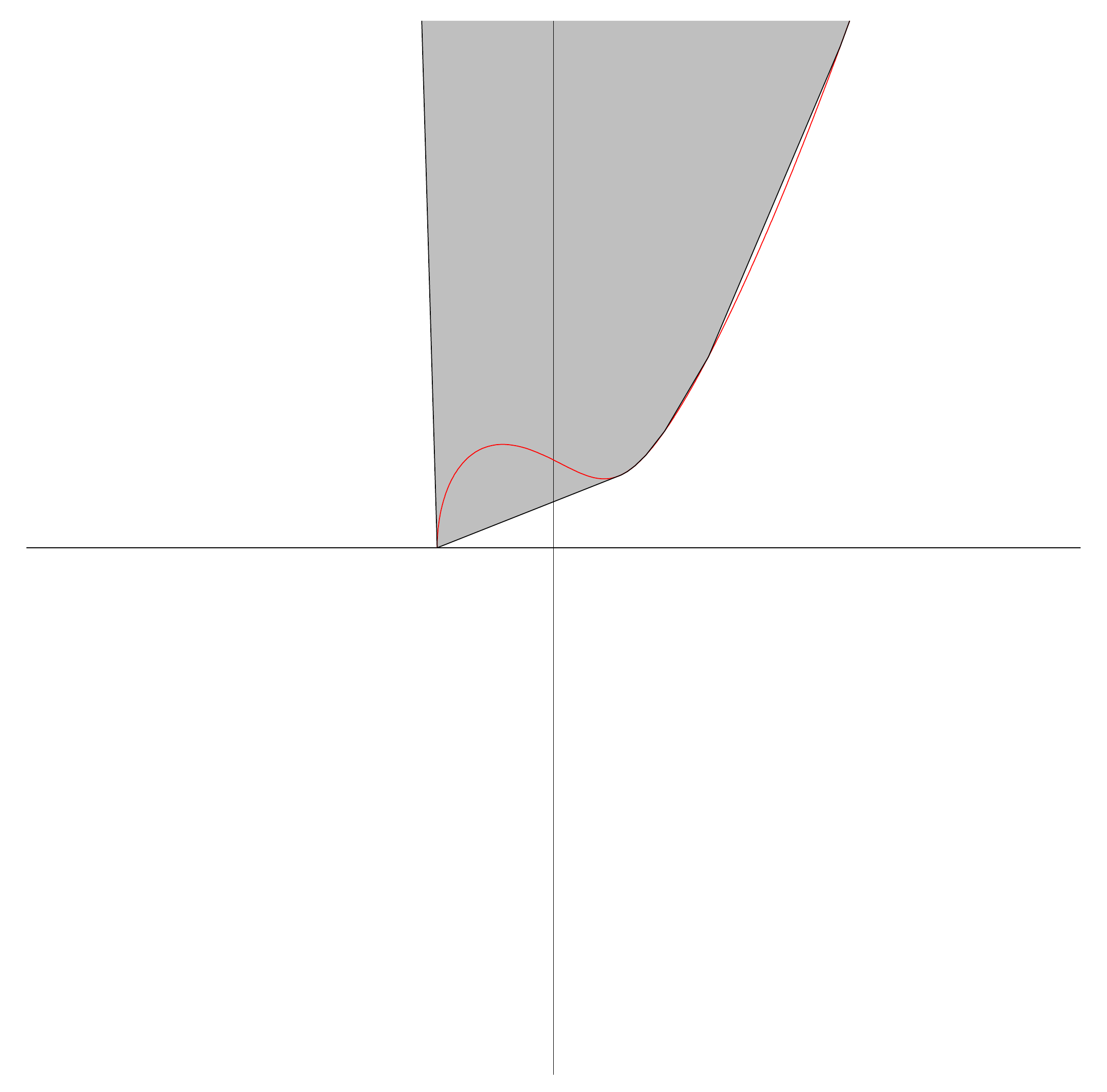}
\caption{The semialgebraic set $S$ (red curve) and projected spectrahedron $\og{3}$
 (shaded) in Example \ref{ex::ex2}.}\label{fig::ex2}
\end{figure}
\end{example}

\begin{remark}
 As shown in Theorem \ref{th::main3} and Corollary \ref{cor::mainomega}, the
projected spectrahedra $\og{k}$ for ${k\in\N}$ are outer approximations of
$\cl{\co{S}}$ and convergent uniformly to $\cl{\co{S}}$ restricted to every
fixed ball $\mathbf{B}_r$. If we truncated $S$ first by the ball $\mathbf{B}_r$
and then compute the convex hull of the resulting compact set by Lasserre's
relaxations (\ref{eq::Lasserre}), in general, we can not get approximations of
the truncation $\cl{\co{S}}\cap \mathbf{B}_r$.
Taking Example \ref{ex::ex2} for instance, compared with Figure \ref{fig::ex2},
Lasserre's relaxation of $\cl{\co{S\cap{\mathbf{B}_r}}}$ shown in Figure
\ref{fig::ex2Lasserre}  is not an outer
approximation of the truncation
$\cl{\co{S}}\cap\mathbf{B}_r$.
\begin{figure}
\includegraphics[width=0.4\textwidth]{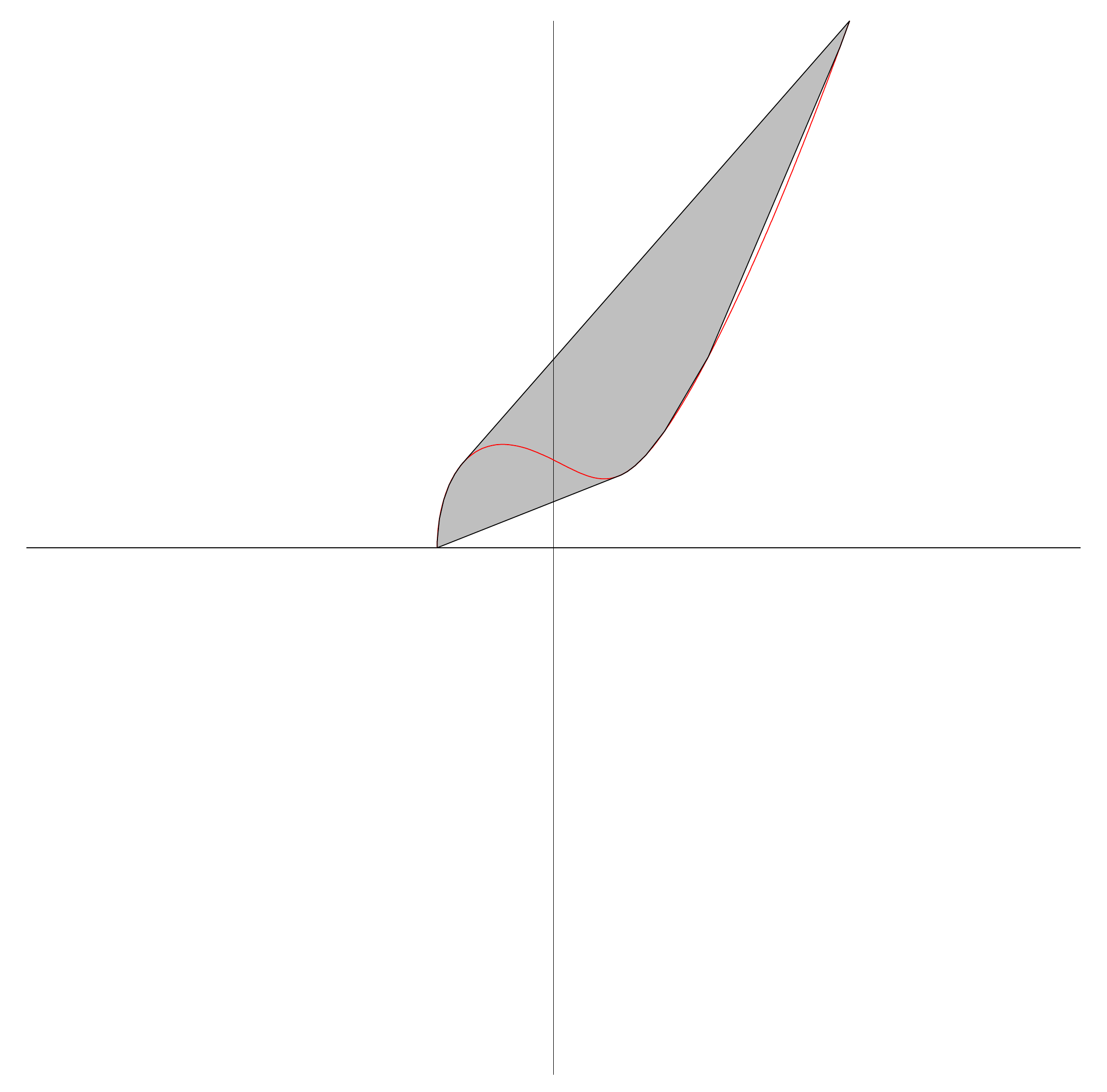}
\caption{Lasserre's relaxation of
$\cl{\co{S}}\cap{\mathbf{B}_r}$ in Example
\ref{ex::ex2}.}\label{fig::ex2Lasserre}
\end{figure}
\end{remark}

 By similar arguments given in \cite{thetabody, convexAGch7}, we
show below  that  $\tbg{k}=\cl{\og{k}}$ for each $k\in\N$  if $\Qk(\wt{G})$ is closed.

Let $M:=\{(1, x)\mid x\in\RR^{n}\}$ and
$\Qk^1(\wt{G})=\Qk(\wt{G})\cap\P[\wt{X}]_1$. By the definition of dual cones,
\begin{equation}\label{eq::qt}
\Qk^1(\wt{G})^*\cap M=\{1\}\times \tbg{k}.
\end{equation}
Denote $\proj{\Qk(\wt{G})^*}$ as the projection of $\Qk(\wt{G})^*$ onto
$\left(\P[\wt{X}]_1\right)^*$.
It is clear that
\begin{equation}\label{eq::po}
\proj{\Qk(\wt{G})^*}\cap M=\{1\}\times \Omega_k(\wt{G}).
\end{equation}
If $\Qk(\wt{G})$ is closed, by Proposition \ref{prop::conedual}, we
have
\begin{equation}\label{eq::proj}
\Qk^1(\wt{G})^*=\cl{\proj{\Qk(\wt{G})^*}}.
\end{equation}
\begin{lemma}\label{lem::ri}
If $\Qk(\wt{G})$ is closed, the hyperplane $M$ intersects
$\ri{\proj{\Qk(\wt{G})^*}}$.
\end{lemma}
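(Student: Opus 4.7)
The plan is to produce a point in $M \cap \ri{\proj{\Qk(\wt{G})^*}}$ abstractly, using (\ref{eq::proj}) together with a standard characterization of the relative interior of a dual cone; no concrete moment sequence is needed.

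First I will use the hypothesis that $\Qk(\wt{G})$ is closed to invoke (\ref{eq::proj}) and conclude $\cl{\proj{\Qk(\wt{G})^*}} = \Qk^1(\wt{G})^*$. Since the relative interior of a convex set agrees with that of its closure, it suffices to produce a point of $M$ inside $\ri{\Qk^1(\wt{G})^*}$. Viewing $\P[\wt{X}]_1$ as the $(n+1)$-dimensional space of homogeneous linear forms in $\wt{X}$, $\Qk^1(\wt{G})^*$ is a nonempty closed convex cone in the finite-dimensional space $(\P[\wt{X}]_1)^*$, and hence admits a nonempty relative interior by \cite[Theorem~6.2]{convexanalysis}. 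Pick any $y^* \in \ri{\Qk^1(\wt{G})^*}$.

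The key step is to show $\mathscr{L}_{y^*}(X_0) > 0$; once this is in hand, the rescaled element $y^{**} := y^*/\mathscr{L}_{y^*}(X_0)$ satisfies $\mathscr{L}_{y^{**}}(X_0) = 1$, so $y^{**} \in M$, and it remains in $\ri{\Qk^1(\wt{G})^*}$ because the relative interior of a convex cone is invariant under positive scaling. To prove strict positivity I will invoke the standard fact that, for any closed convex cone $K$ in finite dimension, $y \in \ri{K^*}$ if and only if $\langle y, p \rangle > 0$ for every $p \in K \setminus (K \cap -K)$; the underlying affine-hull computation ($\mathbf{aff}(K^*) = (K \cap -K)^\perp$) is a direct application of Proposition \ref{prop::conedual}. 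Applied with $K = \Qk^1(\wt{G})$, the positivity reduces to verifying that $X_0 \in \Qk^1(\wt{G}) \setminus (-\Qk^1(\wt{G}))$. Containment is immediate since $X_0 \in \wt{G}$ is homogeneous of degree one. For $-X_0 \notin \Qk(\wt{G})$: assuming $S$ is nonempty (else the whole framework is trivial), any $u \in S$ lifts to $\td{u} := (1,u)/\Vert(1,u)\Vert_2 \in \wt{S}$ with $\td{u}_0 > 0$, so $-X_0(\td{u}) = -\td{u}_0 < 0$, contradicting the fact that every element of $\Qk(\wt{G})$ is nonnegative on $\wt{S}$.

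I expect the main obstacle to be purely organizational: carefully identifying $(\P[\wt{X}]_1)^*$ with $\RR^{n+1}$ and assembling the pairing-based characterization of $\ri{K^*}$ from Proposition \ref{prop::conedual}. It is worth noting that the argument deliberately avoids any Slater-type positive-definiteness of the moment matrices, which would fail here in any case because the equality constraint $M_{k-1}((\Vert\wt{X}\Vert_2^2-1)y) = 0$ forces a nontrivial common kernel among the moment matrices.
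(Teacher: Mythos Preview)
Your argument is correct but follows a different route from the paper's. After the shared reduction via (\ref{eq::proj}) and $\ri{C}=\ri{\cl{C}}$, the paper applies Theorem~\ref{relative point decomposation} (Rockafellar's fiberwise description of the relative interior): it projects $\Qk^1(\wt{G})^*$ onto the $X_0$-coordinate, observes that this projection equals $[0,\infty)$ (using $X_0\in\Qk^1(\wt{G})$ for one inclusion and the evaluation functional at $(1,u)/\Vert(1,u)\Vert_2\in\wt{S}$ for the other), and concludes that since $1\in\ri{[0,\infty)}$ there is a relative interior point over it. Your approach instead picks an arbitrary $y^*\in\ri{\Qk^1(\wt{G})^*}$ and forces $\langle y^*,X_0\rangle>0$ via the supporting-hyperplane characterization of $\ri{K^*}$, then rescales. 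Both arguments ultimately rest on the same two facts ($X_0\in\Qk^1(\wt{G})$ and $-X_0\notin\Qk(\wt{G})$ witnessed by any $\td{u}\in\wt{S}$); the paper's version is slightly shorter because Theorem~\ref{relative point decomposation} is already on hand, whereas your route requires assembling the $\ri{K^*}$ characterization from Proposition~\ref{prop::conedual} and needs the closedness of $\Qk^1(\wt{G})$ (which does follow, since $\P[\wt{X}]_1$ is a linear subspace and $\Qk(\wt{G})$ is closed by hypothesis). Your remark that a Slater-type argument is unavailable because of the sphere constraint is a nice observation that the paper does not make explicit.
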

\begin{proof}
By \cite[Theorem 6.3]{convexanalysis} and $(\ref{eq::proj})$, it is
equivalent to prove $M$ intersects $\ri{\Qk^1(\wt{G})^*}$. Fixing a vector
$u\in S$, then we have $\td{l}:=(1,u)/\Vert(1,u)\Vert_2\in\wt{S}$ and
 $\td{l}\in\Qk^1(\wt{G})^*$. Let
\[D:=\{t_0\in\RR\mid\exists\ t\in\RR^n,\ \text{s.t. } (t_0,
t)\in\Qk^1(\wt{G})^*\}.\]
 Since $X_0\in\Qk^1(\wt{G})$ and
$c\cdot\td{l}\in\Qk^1(\wt{G})^*$ for all $c\ge 0$, we get $D=[0,\infty)$
and thus $1\in\ri{D}$.  By  Theorem \ref{relative point
decomposation}, we derive  that $M$ intersects $\ri{\Qk^1(\wt{G})^*}$.
\end{proof}
\begin{theorem}\label{th::eq}
If $\Qk(\wt{G})$ is closed, then
$\wt{\tb}_k(\wt{G})=\cl{\Omega_k(\wt{G})}$.
\end{theorem}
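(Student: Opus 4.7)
The plan is to exploit the relative-interior machinery (Theorem \ref{th::convexclosure}) together with the three identities $(\ref{eq::qt})$, $(\ref{eq::po})$, $(\ref{eq::proj})$ and Lemma \ref{lem::ri} that the paper has just established. Theorem \ref{th::omegatheta} already gives the inclusion $\cl{\Omega_k(\wt{G})}\subseteq\wt{\tb}_k(\wt{G})$, so only the reverse inclusion $\wt{\tb}_k(\wt{G})\subseteq\cl{\Omega_k(\wt{G})}$ needs work.

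First I would note that $\proj{\Qk(\wt{G})^*}$ is a convex set (being the image under a linear projection of the convex cone $\Qk(\wt{G})^*$), and the affine subspace $M=\{1\}\times\RR^n$ meets its relative interior by Lemma \ref{lem::ri}. Invoking Theorem \ref{th::convexclosure} with $C=\proj{\Qk(\wt{G})^*}$ then yields
\[
\cl{M\cap \proj{\Qk(\wt{G})^*}}\;=\;M\cap\cl{\proj{\Qk(\wt{G})^*}}.
\]
The left-hand side, by $(\ref{eq::po})$ and the fact that $x\mapsto(1,x)$ is a homeomorphism, equals $\{1\}\times\cl{\Omega_k(\wt{G})}$. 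The right-hand side, using the closedness hypothesis on $\Qk(\wt{G})$ via $(\ref{eq::proj})$, equals $M\cap \Qk^1(\wt{G})^*$, which in turn equals $\{1\}\times\wt{\tb}_k(\wt{G})$ by $(\ref{eq::qt})$. Stripping off the leading coordinate gives $\cl{\Omega_k(\wt{G})}=\wt{\tb}_k(\wt{G})$.

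The only steps that require any thought are verifying the inputs to the quoted results: that $\proj{\Qk(\wt{G})^*}$ is convex (immediate, since $\Qk(\wt{G})^*$ is a convex cone and linear projection preserves convexity), and that the equality $(\ref{eq::proj})$ is genuinely available here. The latter is where the hypothesis ``$\Qk(\wt{G})$ is closed'' enters: it lets us apply Proposition \ref{prop::conedual} to the closed convex cones $\Qk(\wt{G})$ and $\P[\wt{X}]_1$ (viewed as a subspace) to identify $\Qk^1(\wt{G})^*=(\Qk(\wt{G})\cap\P[\wt{X}]_1)^*$ with $\cl{\Qk(\wt{G})^*+\P[\wt{X}]_1^\perp}$, and then to recognize the latter as the closure of the projection $\proj{\Qk(\wt{G})^*}$ onto $(\P[\wt{X}]_1)^*$.

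I expect the main (and essentially the only) subtlety to be the correct bookkeeping of the ambient spaces and the projection: one must be careful that the ``affine slice'' $M$ used on the $\Omega$-side and on the $\wt{\tb}$-side really is the same set, so that Theorem \ref{th::convexclosure} produces exactly the identification above without any spurious additional closure. Once the bookkeeping is matched, the argument is a direct four-line chain, and no compactness or Positivstellensatz input is needed beyond what is already packaged in the cited identities.
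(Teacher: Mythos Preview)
Your proposal is correct and follows essentially the same route as the paper: the paper's proof is precisely the four-line chain $\{1\}\times\cl{\Omega_k(\wt{G})}=\cl{\proj{\Qk(\wt{G})^*}\cap M}=\cl{\proj{\Qk(\wt{G})^*}}\cap M=\Qk^1(\wt{G})^*\cap M=\{1\}\times\tbg{k}$, using $(\ref{eq::qt})$, $(\ref{eq::po})$, $(\ref{eq::proj})$, Theorem \ref{th::convexclosure} and Lemma \ref{lem::ri}. Your invocation of Theorem \ref{th::omegatheta} for one inclusion is harmless but unnecessary, since the chain already yields equality directly.
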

\begin{proof}
By (\ref{eq::qt}), (\ref{eq::po}), (\ref{eq::proj}), Theorem
\ref{th::convexclosure} and Lemma \ref{lem::ri}, we have
\[
\begin{aligned}
\{1\}\times\cl{\Omega_k(\wt{G})}&=\cl{\proj{\Qk(\wt{G})^*}\cap M}\\
&=\cl{\proj{\Qk(\wt{G})^*}}\cap M\\
&=\Qk^1(\wt{G})^*\cap M\\
&=\{1\}\times\tbg{k}.
\end{aligned}
\]
This shows that $\wt{\tb}_k(\wt{G})=\cl{\Omega_k(\wt{G})}$.
\end{proof}

\section{More discussions on Assumption \ref{assumption}}\label{sec::discussion}

As we have seen, if Assumption \ref{assumption} is satisfied, then we can obtain a
hierarchy of nested semidefinite relaxations converging to $\cl{\co{S}}$.
In this section, we give more discussions on cases where
 Assumption
\ref{assumption} does not hold.

\subsection{Closedness at $\infty$ of $S$} We have mentioned that
a  semialgebraic set  is closed at $\infty$ in general \cite{GWZ}.
Unfortunately, as we show below, the closedness condition does not hold on
certain kinds of semialgebraic sets.

 Let $U$ be a semialgebraic set defined as
\begin{equation}\label{eq::defU}
U:=\left\{x\in\RR^n\ \Big|\
\begin{aligned}
&g_i(x)=0,\ g_j(x)\ge 0,\ i=1,\ldots,m_1,\\
&\deg(g_j)\ \text{is even, } j=1,\ldots, m_2
\end{aligned}\right\}.
\end{equation}
Denote
\[
\begin{aligned}
\wt{U}^\text{o}&=\left\{\td{x}\in\RR^{n+1}\mid\td{g}_i(\td{x})=0,\ \td{g}_j(\td{x})\ge 0,\
x_0>0,\ \ i=1,\ldots,m_1,\ j=1,\ldots, m_2\right\},\\
\wt{U}^\text{c}&=\left\{\td{x}\in\RR^{n+1}\mid\td{g}_i(\td{x})=0,\ \td{g}_j(\td{x})\ge 0,\
x_0\ge 0,\ \ i=1,\ldots,m_1,\ j=1,\ldots, m_2\right\}.
\end{aligned}
\]
\begin{prop}\label{prop::U}
Suppose $U$ is not compact. If $\co{\cl{\wt{U}^\text{\upshape o}}}$ is closed
and pointed, then $U$ is not closed at $\infty$.
\end{prop}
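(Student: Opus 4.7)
The plan is to show that non-compactness forces a nonzero point of the form $(0,y)$ to lie in $\cl{\wt{U}^\text{o}}$, and then to exploit the sign symmetry of the defining constraints of $U$ at $x_0=0$ to produce the antipode $(0,-y)$ in $\cl{\wt{U}^\text{o}}$ as well; pointedness of $\co{\cl{\wt{U}^\text{o}}}$ then yields a contradiction.

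First, since $U$ is not compact, I would pick a sequence $\{x^{(k)}\}\subseteq U$ with $\Vert x^{(k)}\Vert_2\to\infty$. The normalized lifts $(1,x^{(k)})/\Vert(1,x^{(k)})\Vert_2$ lie in $\wt{U}\subseteq\cl{\wt{U}^\text{o}}$ and sit on the unit sphere, so after extracting a convergent subsequence they converge to some $\td{y}=(0,y)$ with $\Vert y\Vert_2=1$. Since $\cl{\wt{U}^\text{o}}$ is closed, $(0,y)\in\cl{\wt{U}^\text{o}}$ and $(0,y)\neq 0$.

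Second, assume for contradiction that $U$ is closed at $\infty$, i.e.\ $\cl{\wt{U}^\text{o}}=\wt{U}^\text{c}$. I would verify that $(0,-y)\in\wt{U}^\text{c}$ as well. For any generator $g_\ell$ of degree $d_\ell$, the homogenization satisfies $\td{g}_\ell(0,z)=g_\ell^{\text{top}}(z)$, the degree-$d_\ell$ form of $g_\ell$. For each equality generator, $g_i^{\text{top}}(-y)=(-1)^{d_i}g_i^{\text{top}}(y)=0$, so $\td{g}_i(0,-y)=0$. For each inequality generator (of even degree by hypothesis), $g_j^{\text{top}}(-y)=g_j^{\text{top}}(y)\ge 0$. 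Together with $0\ge 0$ in the $x_0$-coordinate, this puts $(0,-y)\in\wt{U}^\text{c}=\cl{\wt{U}^\text{o}}$.

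Finally, both $(0,y)$ and $-(0,y)=(0,-y)$ lie in $\cl{\wt{U}^\text{o}}\subseteq\co{\cl{\wt{U}^\text{o}}}$. Pointedness of $\co{\cl{\wt{U}^\text{o}}}$ means $\co{\cl{\wt{U}^\text{o}}}\cap -\co{\cl{\wt{U}^\text{o}}}=\{0\}$, forcing $(0,y)=0$, which contradicts $\Vert y\Vert_2=1$. Hence $U$ is not closed at $\infty$. The only step that requires care is the invariance $g_j^{\text{top}}(-y)=g_j^{\text{top}}(y)$ for the inequality generators, which is precisely why the even-degree hypothesis on $g_j$, $j=1,\ldots,m_2$, appears in the definition \eqref{eq::defU}; without it the antipode step fails.
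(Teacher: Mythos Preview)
Your proof is correct and follows essentially the same approach as the paper's: both use non-compactness to produce a nonzero limit point $(0,y)\in\cl{\wt{U}^\text{o}}$, then invoke the even-degree hypothesis on the inequality generators (and the vanishing of the equality generators' top forms) to place the antipode $(0,-y)$ in $\wt{U}^\text{c}$, and finally use pointedness to separate $\cl{\wt{U}^\text{o}}$ from $\wt{U}^\text{c}$. The only cosmetic difference is that you organize this as a proof by contradiction assuming closedness at $\infty$, whereas the paper argues directly that pointedness forces $(0,-y)\notin\cl{\wt{U}^\text{o}}$ while $(0,-y)\in\wt{U}^\text{c}$; the content is identical.
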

\begin{proof}
Since $U$ is not compact, there is a sequence $\{u^{(k)}\}_{k=1}^\infty\subseteq U$ satisfying 
$\lim_{k\rightarrow\infty}\Vert u^{(k)}\Vert_2=\infty.$
Because $\{(1,u^{(k)})/\Vert(1,u^{(k)})\Vert_2\}\subseteq\wt{U}^\text{o}$ is
bounded, there exists a nonzero point
$\td{u}=(0,u_1,\ldots,u_n)\in\cl{\wt{U}^\text{o}}$.

If
$\co{\cl{\wt{U}^\text{o}}}$ is closed and pointed,
by Theorem \ref{th::pointedcone}, we have $-\td{u}\not\in\cl{\wt{U}^\text{o}}$.
However,
 $\deg(g_j)$ is   even for $ j=1,\ldots, m_2$,
it is straightforward  to see both
$\td{u}$ and $-\td{u}$ belong to $\wt{U}^\text{c}$,
which implies $\cl{\wt{U}^\text{o}} \neq \wt{U}^\text{c}$. Therefore,
$U$ is not closed at $\infty$.
\end{proof}

\begin{remark}\label{re::closed}
Consider the semialgebraic set $S$ defined as in
(\ref{eq::Snew}). Let  $\hat{g}_i$ be the
homogeneous part of the highest degree of $g_i$ for $i=1, \ldots, m$ and
 $D_S=\wt{S}^\text{c}\backslash\cl{\wt{S}^\text{o}}$. If $S$ is not closed at
$\infty$, then
\[
\emptyset\neq D_S\subseteq\{(0, x)\in\RR^{n+1}\mid \hat{g}_1(x)\ge 0,\ \ldots,\
\hat{g}_m(x)\ge 0\}.
\]
Decompose $D_S=D_S^1\cup D_S^2$ where
\[
D_S^1=\left\{(0, x)\in\RR^{n+1}\ \Big|\ (0,
x)\in\wt{S}^\text{c}\backslash\cl{\wt{S}^\text{o}}\ \text{but }
(0,-x)\in\cl{\wt{S}^\text{o}}\right\}
\]
and
\[
D_S^2=\left\{(0, x)\in\RR^{n+1}\ \Big|\ (0,
x)\in\wt{S}^\text{c}\backslash\cl{\wt{S}^\text{o}}\ \text{and }
(0,-x)\not\in\cl{\wt{S}^\text{o}}\right\}.
\]
If $S$ is defined by (\ref{eq::defU}), then by the proof of Proposition
\ref{prop::U}, $D_S^1\neq\emptyset$.  However,  if $\co{\cl{\wt{S}^\text{o}}}$
is closed and pointed,  there exists a linear function
$\td{l}\in\P[\wt{X}]_1$ such that $\td{l}(\td{x})>0$ on
$\co{\cl{\wt{S}^\text{o}}}\backslash\{0\}$.  Adding  the inequality
$\td{l}(\td{x})\ge 0$ to the generators  of $\wt{S}^\text{o}$, or equivalently,
adding $\td{l}(1,x)\ge 0$ to the generators of $S$,  it is clear
that
$S,\wt{S}^\text{o}$ remain the same but we have $D_S^1=\emptyset$. As a result,
the set $S$ with new generators is more likely to be closed at $\infty$. This
shows that the closedness at $\infty$ depends not only on the geometry of $S$,
but also on the generators of $S$.
\end{remark}

\paragraph{\bf Example \ref{ex::notclosed} (continued)}
We have shown that the semialgebraic set $S$ is not closed at $\infty$, which
can also be verified by Proposition \ref{prop::U}.  According to  Remark
\ref{re::closed}, we  add
$x_0+x_2\ge 0$
to the  generators  of
$\wt{S}^\text{o}$, or equivalently, we add
$1+x_2\ge 0$
to the generators of
$S$, then $\wt{S}^{\text{c}}\backslash\cl{\wt{S}^{\text{o}}}=\emptyset$.
Therefore,  $S$ is closed at $\infty$ with respect to its new generators.  Since
the geometry of $S$  does not change, $\co{\cl{\wt{S}^{\text{o}}}}$ is still
closed and pointed. The second order spectrahedron $\Omega_2(\wt{G})$ is shown
(shaded) in Figure \ref{fig::notclosed}. $\hfill\square$
\begin{figure}
\includegraphics[width=0.4\textwidth]{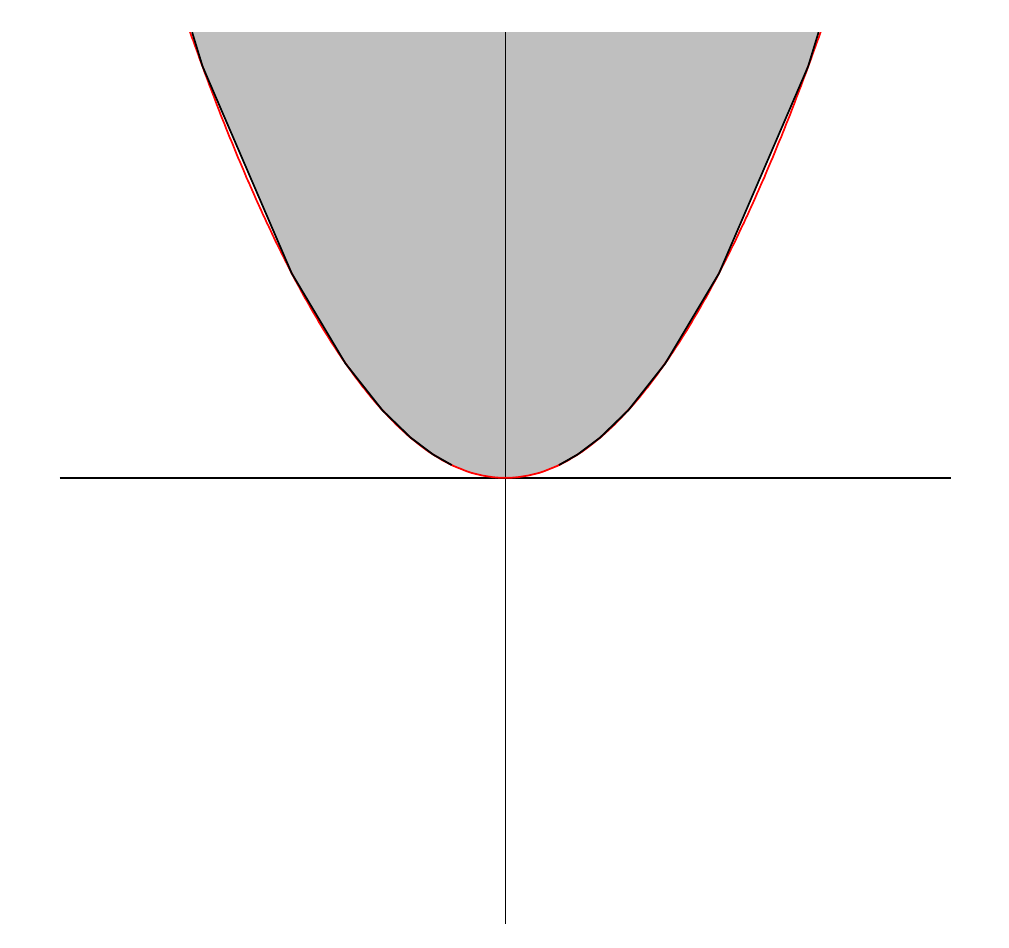}
\caption{The parabola $X_2-X_1^2=0$ (red curve) and the second order
spectrahedron $\Omega_2(\wt{G})$ (shaded) in Example
\ref{ex::notclosed}.}\label{fig::notclosed}
\end{figure}

\begin{example}\label{ex::notclosed2}
Consider the quartic bow curve
\[
S:=\{(x_1, x_2)\in\RR^2\mid x_1^4-x_1^2x_2+x_2^3=0\}
\]
as shown (red) in Figure \ref{fig::notclosed2}. We have
\[
\begin{aligned}
\wt{S}^{\text{o}}&=\{(x_0,x_1,x_2)\in\RR^3\mid x_1^4-x_0x_1^2x_2+x_0x_2^3= 0,\ x_0>0\},\\
\wt{S}^{\text{c}}&=\{(x_0,x_1,x_2)\in\RR^3\mid x_1^4-x_0x_1^2x_2+x_0x_2^3= 0,\ x_0\ge 0\}.
\end{aligned}
\]
We first show that $\co{\cl{\wt{S}^{\text{o}}}}$ is closed and pointed by
proving the polynomial $X_0-X_2$ is positive on
$\co{\cl{\wt{S}^{\text{o}}}}\backslash\{0\}$.
 For every $0\neq\td{u}=(u_0, u_1,
u_2)\in\cl{\wt{S}^{\text{o}}}$, we have $u_1^4-u_0u_1^2u_2+u_0u_2^3= 0$. If
$u_2=0$, then $u_1=0$ and $u_0-u_2>0$. Assume $u_2\neq 0$, then
$u_1^2u_2-u_2^3\neq 0$. Otherwise, we have $u_1^2=u_2^2$ and
$u_2^4-u_0u_2^3+u_0u_2^3=0$, then $u_2=0$, a contradiction. Therefore,
\[
\begin{aligned}
u_0-u_2&=\frac{u_1^4}{u_1^2u_2-u_2^3}-u_2\\
&=\frac{(u_1^2-u_2^2)^2+u_1^2u_2^2}{u_1^2u_2-u_2^3}\\
&> 0,
\end{aligned}
\]
which implies
 $\co{\cl{\wt{S}^{\text{o}}}}$ is closed and pointed. Since $S$ is
of form (\ref{eq::defU}), by Proposition \ref{prop::U}, $S$ is not
closed at $\infty$. By Remark \ref{re::closed}, we
add
$1-x_2\ge0$ to the
generators of $S$ to ``force'' it to be closed at
$\infty$. The third order spectrahedron $\Omega_3(\wt{G})$ with the new
generating set is shown (shaded) in Figure \ref{fig::notclosed2}.
$\hfill\square$
\begin{figure}
\includegraphics[width=0.4\textwidth]{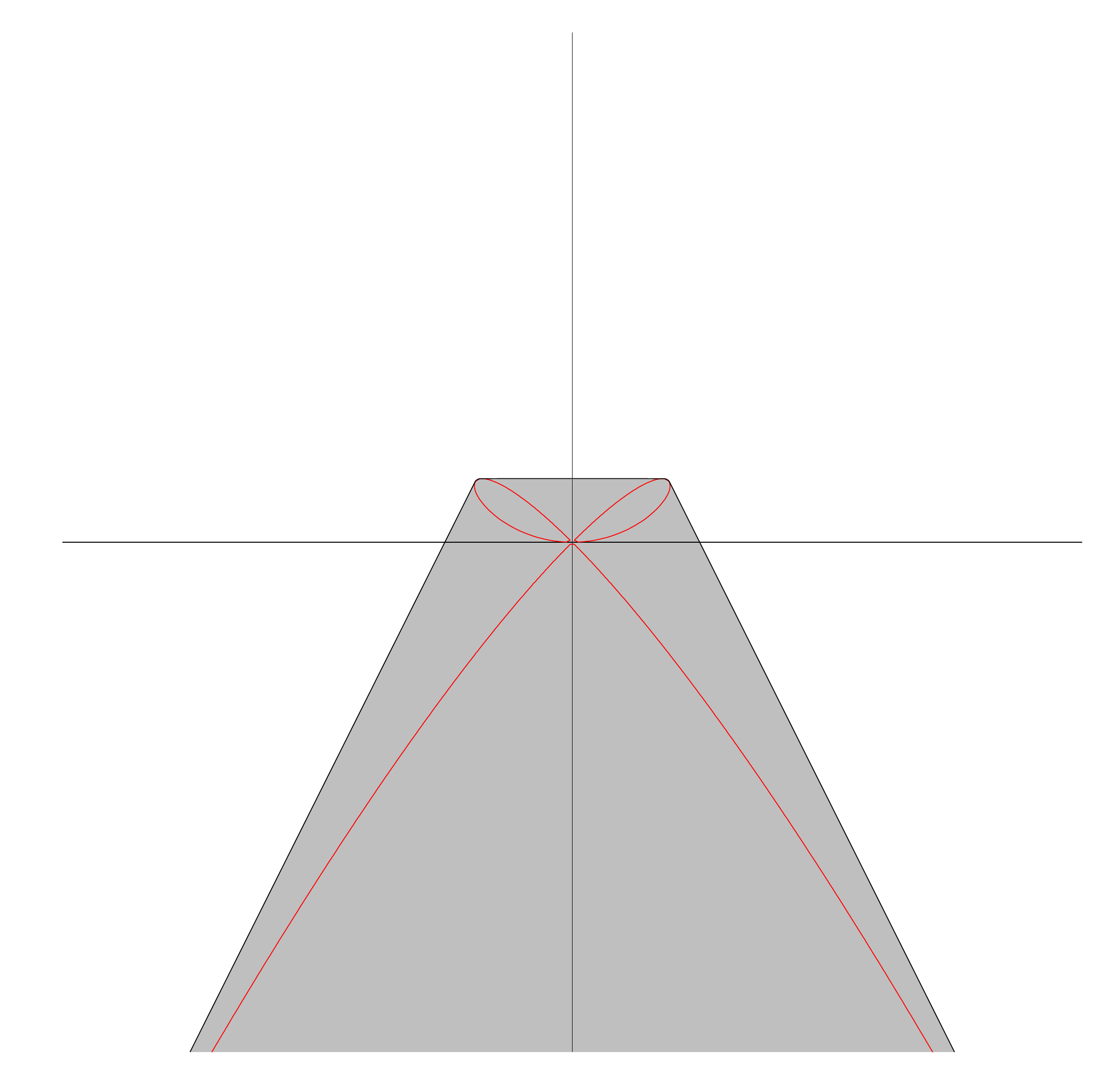}
\caption{The bow curve $S$ (red curve) and the third order spectrahedron
$\Omega_3(\wt{G})$ (shaded) in Exmaple
\ref{ex::notclosed2}.}\label{fig::notclosed2}
\end{figure}
\end{example}

\subsection{Pointedness of $\co{\cl{\wt{S}^{\text{o}}}}$}
When $\co{\cl{\wt{S}^{\text{o}}}}$ is not pointed, we divide   $S$ into $2^n$
parts along each axis. Let
\begin{equation}\label{mathE}
\mathcal{E}:=\{e=(e_1,\ldots,e_n)\mid e_i\in\{0, 1\},\ \ i=1,\ldots,n\}
\end{equation}
and for each $e\in\mathcal{E}$,
\begin{equation}\label{eq::Se}
S_e:=\{x\in\RR^n\mid g_i(x)\ge 0,\ (-1)^{e_j}x_j\ge 0,\ \ i=1,\ldots,m,\
j=1,\ldots, n\}.
\end{equation}
Then $S=\bigcup_{e\in\mathcal{E}}S_e$ and $\vert\mathcal{E}\vert=2^n$. For each
$e\in\mathcal{E}$, define $\wt{S}_e^\text{o}$, $\wt{S}_e^\text{c}$,  $\wt{S}_e$
and $\wt{G}_e$ as in $(\ref{eq::Ss})$ and (\ref{eq::defG}).
By Theorem \ref{th::pointedcone}, both of $\co{\cl{\wt{S}_e^\text{o}}}$
and $\co{\wt{S}_e^\text{c}}$ are closed and pointed for each $e\in\mathcal{E}$.

\begin{theorem}\label{th::notpointed}
Let $S\in\RR^n$ be a semialgebraic set defined as in $(\ref{eq::Snew})$.
Assume that
\begin{enumerate}[1.]
\item $S$ is closed at $\infty$;
\item
For each $e\in\mathcal{E}$, PP-BDR property holds for $\wt{S}_e$.
\end{enumerate}
Then $\cl{\co{S}}$ is the closure of a projected spectrahedron.
\end{theorem}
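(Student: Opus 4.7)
The plan is to decompose $S$ into its $2^n$ orthant pieces $\{S_e\}_{e \in \mathcal{E}}$, apply Corollary \ref{cor::mainomega}(2) to each piece to get an exact projected-spectrahedron representation of $\cl{\co{S_e}}$, and then glue the pieces together via a standard convex-hull homogenization.

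First I would verify Assumption \ref{assumption} for every $S_e$. Pointedness of $\co{\cl{\wt{S}_e^{\text{o}}}}$ follows from the inclusion $\cl{\wt{S}_e^{\text{o}}} \subseteq \wt{S}_e^{\text{c}} \subseteq \{\td{x} : x_0 \ge 0,\ (-1)^{e_j} x_j \ge 0,\ j=1,\ldots,n\}$, because the lifted closed orthant on the right contains no line through the origin; Theorem \ref{th::pointedcone} then makes $\co{\cl{\wt{S}_e^{\text{o}}}}$ closed and pointed. For the closedness at infinity of $S_e$, I would take $\td{x} \in \wt{S}_e^{\text{c}}$ with $x_0 = 0$, use the closedness of $S$ at infinity to pick a sequence $\td{y}^{(k)} \in \wt{S}^{\text{o}}$ converging to $\td{x}$, and modify those coordinates where $x_j = 0$ but $y_j^{(k)}$ has the wrong sign by a small interior perturbation in the direction of $H_e$, arguing that the perturbed sequence can be kept inside $\wt{S}^{\text{o}}$ while still converging to $\td{x}$.

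Second, once Assumption \ref{assumption} holds for each $S_e$ together with the assumed PP-BDR property for $\wt{S}_e$, Corollary \ref{cor::mainomega}(2) produces an order $k_e \in \N$ with $\cl{\co{S_e}} = \cl{\Omega_{k_e}(\wt{G}_e)}$. Hence every $\cl{\co{S_e}}$ is the closure of a concrete projected spectrahedron $C_e \subseteq \RR^n$.

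Third, from $S = \bigcup_e S_e$ one has $\cl{\co{S}} = \cl{\co{\bigcup_{e \in \mathcal{E}} C_e}}$. To realize this as the closure of a projected spectrahedron I would describe the convex hull via combinations
\[
\co{\bigcup_{e} C_e} = \bigg\{\sum_{e \in \mathcal{E}} \mu_e y_e \;:\; \mu_e \ge 0,\ \sum_{e \in \mathcal{E}} \mu_e = 1,\ y_e \in C_e\bigg\},
\]
substitute $z_e = \mu_e y_e$, and rewrite each constraint $y_e \in C_e$ as a homogeneous linear matrix inequality in $(\mu_e, z_e)$ (the closed conic hull of $\{1\} \times C_e$, which remains a spectrahedral cone after absorbing the defining SDP auxiliaries into block-diagonal form). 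Stacking the $|\mathcal{E}|$ blocks, together with the affine constraints $\sum_e \mu_e = 1$ and $x = \sum_e z_e$, produces a single projected spectrahedron whose closure equals $\cl{\co{S}}$.

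The main obstacle is the closedness-at-infinity step for each $S_e$, since the interaction between the orthant cut $H_e$ and the polynomial inequalities $\td{g}_i \ge 0$ must be controlled carefully when $\td{x}$ has coordinates on the boundary of $H_e$; the conic-hull homogenization in the last step is conceptually routine, but producing an explicit block LMI system (rather than an abstract projected spectrahedron) requires some bookkeeping.
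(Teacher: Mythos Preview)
Your main gap is the closedness-at-infinity step for the pieces $S_e$: this can genuinely fail, so no perturbation argument will rescue it. For a concrete counterexample take $S=\{(x_1,x_2)\in\RR^2\mid x_1-x_2^3\ge 0\}$, which is closed at $\infty$ (for $(0,a,0)$ with $a<0$ use the approximants $(\epsilon,a,-(\epsilon^2|a|)^{1/3})$). Then $S_{(1,0)}=\{x_1-x_2^3\ge 0,\ -x_1\ge 0,\ x_2\ge 0\}=\{(0,0)\}$, so $\wt{S}_{(1,0)}^{\text{o}}=\{(x_0,0,0):x_0>0\}$ and $\cl{\wt{S}_{(1,0)}^{\text{o}}}$ is the nonnegative $x_0$-axis; yet $\wt{S}_{(1,0)}^{\text{c}}$ contains the ray $\{(0,x_1,0):x_1\le 0\}$. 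Hence $S_{(1,0)}$ is not closed at $\infty$, Assumption~\ref{assumption} fails for this piece, and you cannot invoke Corollary~\ref{cor::mainomega}(2). Your proposed fix---flipping the sign of ``wrong'' coordinates of an approximating sequence from $\wt{S}^{\text{o}}$---breaks the inequalities $\td{g}_i\ge 0$ in exactly such situations.

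The paper sidesteps this by \emph{not} proving $\cl{\Omega_{k'}(\wt{G}_e)}=\cl{\co{S_e}}$. It instead establishes the sandwich
\[
\cl{\co{S_e}}\ \subseteq\ \cl{\Omega_{k'}(\wt{G}_e)}\ \subseteq\ \cl{\co{S}}\qquad\text{for every }e\in\mathcal{E},
\]
which is all the gluing needs. The left inclusion is Theorem~\ref{th::omegatheta}. For the right, one separates $u\notin\cl{\co{S}}$ by some $\td{f}\in\P[\wt{X}]_1$, uses closedness at $\infty$ of $S$ (not of $S_e$) to get $\td{f}\ge 0$ on $\wt{S}^{\text{c}}\supseteq\wt{S}_e^{\text{c}}\supseteq\wt{S}_e$, perturbs to strict positivity on $\wt{S}_e$ via the pointedness of $\co{\wt{S}_e^{\text{c}}}$ (which \emph{does} hold, as you observed, since $\wt{S}_e^{\text{c}}$ lies in a closed orthant of $\RR^{n+1}$), and then applies PP-BDR for $\wt{S}_e$ to place $\td{f}$ in $\mathcal{Q}_{k'}(\wt{G}_e)$. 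Your final gluing via homogenized convex combinations is correct and matches the paper's appeal to \cite{HeltonNie}.
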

\begin{proof}
Fix an integer $k'$ such that  PP-BDR property holds for each $\wt{S}_e$ with
order $k'$. Note that $S_e$ may not be closed at $\infty$ for some
$e\in\mathcal{E}$. However, we show that
\begin{equation}\label{eq::inclusion}
\cl{\co{S_e}}\subseteq\cl{\Omega_{k'}(\wt{G}_e)}\subseteq\cl{\co{S}},\quad
\forall e\in\mathcal{E}.
\end{equation}
By Theorem \ref{th::omegatheta}, we get
\[
\cl{\co{S_e}}\subseteq\cl{\Omega_{k'}(\wt{G}_e)}\subseteq\wt{\tb}_{k'}(\wt{G}_e),\quad
\forall e\in\mathcal{E}.
\]
Fix a vector $u\not\in\cl{\co{S}}$. According to (\ref{eq::nn}), there exists a
polynomial $\td{f}\in\P[\wt{X}]_1$ such that
$\td{f}(1,u)<0$ and $\td{f}(\td{x})\ge 0$ on $\co{\cl{\wt{S}^\text{o}}}$. Since
$\cl{\wt{S}^\text{o}}=\wt{S}^\text{c}$ and
$\wt{S}_e^\text{c}\subseteq\wt{S}^\text{c}$ for each $e\in\mathcal{E}$, we have
$\td{f}(\td{x})\ge 0$ on each $\co{\wt{S}_e^\text{c}}$.
Because $\co{\wt{S}_e^{\text{c}}}$ is closed and pointed, by Theorem
\ref{th::pointedcone}, there exists a polynomial $\td{g}\in\P[\wt{X}]_1$ such
that $\td{g}(\td{x})>0$ on $\co{\wt{S}_e^{\text{o}}}$. We choose a
small $\epsilon>0$ such that $(\td{f}+\epsilon\td{g})(1,u)<0$ and
rename $\td{f}+\epsilon\td{g}$ as $\td{f}$, then $\td{f}(\td{x})>0$ on
$\wt{S}_e^{\text{c}}$. In particular, $\td{f}(\td{x})>0$ on $\wt{S}_e$.
Since $\wt{S}_e$ satisfies PP-BDR property with order $k'$,
we have $\td{f}\in\mathcal{Q}_{k'}(\wt{G})$ and $u\not\in\tbg{k'}$ due to the
fact that $\td{f}(1,u)<0$. It implies
$\wt{\tb}_{k'}(\wt{G}_e)\subseteq\cl{\co{S}}$ and
 $(\ref{eq::inclusion})$.
Therefore, we have
\[
\begin{aligned}
\cl{\co{S}}&=\cl{\co{\bigcup_{e\in\mathcal{E}}S_e}}\\
&=\cl{\co{\bigcup_{e\in\mathcal{E}}\cl{\co{S_e}}}}\\
&\subseteq\cl{\co{\bigcup_{e\in\mathcal{E}}\cl{\Omega_{k'}(\wt{G}_e)}}}\\
&\subseteq\cl{\co{\cl{\co{S}}}}\\
&=\cl{\co{S}},
\end{aligned}
\]
which implies
\[
\cl{\co{S}}=\cl{\co{\bigcup_{e\in\mathcal{E}}\cl{\Omega_{k'}(\wt{G}_e)}}}
=\cl{\co{\bigcup_{e\in\mathcal{E}}\Omega_{k'}(\wt{G}_e)}}.
\]
Since each $\Omega_{k'}(\wt{G}_e)$ is a projected spectrahedron, by
\cite[Theorem 2.2]{HeltonNie}, we have
\[
\cl{\co{\bigcup_{e\in\mathcal{E}}\Omega_{k'}(\wt{G}_e)}}=
\cl{\begin{array}{c|c} \sum_e\lambda_ex^{(e)} &\sum_e\lambda_e=1,\
\lambda_e\ge 0,\ x^{(e)}\in\Omega_{k'}(\wt{G}_e)
\end{array}}
\]
which is the closure of a projected spectrahedron.
\end{proof}


\begin{remark}\label{re::divide}
If $\mathcal{E}'$ is a subset of $\mathcal{E}$ such that
$S=\bigcup_{e\in\mathcal{E}'}S_e$, then according to the above proof, the
conclusion of Theorem \ref{th::notpointed} still holds if we replace
$\mathcal{E}$ by $\mathcal{E}'$.
\end{remark}

\paragraph{\bf Example \ref{ex::notpointed} (continued)}
For $S=\{(x_1,x_2)\in\RR^2\mid x_2^3-x_1^2\ge 0\}$, we have shown that
$\co{\cl{\wt{S}^{\text{o}}}}$ is not pointed, and the
modified theta bodies $(\ref{eq::newtheta})$ and Lasserre's relaxations
$(\ref{eq::omega})$ do not converge to $\co{\cl{\wt{S}}}$.
Due to Remark \ref{re::divide}, divide $S$ into two parts
\[
\begin{aligned}
S_{(0,0)}&:=\{(x_1,x_2)\in\RR^2\mid x_2^3-x_1^2\ge 0,\ x_1\ge 0,\ x_2\ge 0\},\\
S_{(1,0)}&:=\{(x_1,x_2)\in\RR^2\mid x_2^3-x_1^2\ge 0,\ -x_1\ge 0,\ x_2\ge 0\}.
\end{aligned}
\]
It is easy to check that PP-BDR property holds for $\wt{S}_{(0,0)}$ and
$\wt{S}_{(1,0)}$ with order one. Thus for any $k'\ge 1$, we have
\[
\begin{aligned}
&\Omega_{k'}\left(\wt{G}_{(0,0)}\right)=\{(x_1,x_2)\in\RR^2\mid  x_1\ge 0,\
x_2\ge 0\},\\
&\Omega_{k'}\left(\wt{G}_{(1,0)}\right)=\{(x_1,x_2)\in\RR^2\mid  x_1\le 0,\ x_2\ge 0\}.
\end{aligned}
\]
Clearly,
$\cl{\co{\Omega_{k'}(\wt{G}_{(0,0)})\cup\Omega_{k'}(\wt{G}_{(1,0)})}}=\cl{\co{S}}$
for any integer $k'\ge 1$. $\hfill\square$

However, if PP-BDR property does not hold with order $k'$ for some $\wt{S}_e$,
according to the proof of Theorem \ref{th::notpointed},
$\cl{\Omega_{k'}(\wt{G}_e)}$ may not be a subset of $\cl{\co{S}}$ for some
$e\in\mathcal{E}$. In  this case,
$\cl{\co{\bigcup_{e\in\mathcal{E}}\Omega_{k'}(\wt{G}_e)}}$ may contain
$\cl{\co{S}}$ strictly.

\begin{example}\label{ex::notpointed2}
Rotating the semialgebraic set $S$ in Example \ref{ex::notpointed} about the
origin $45^\circ$ counter-clockwise,  we get
\[
S':=\{(x_1,x_2)\in\RR^2\mid -\sqrt{2}(x_1-x_2)^3-2(x_1+x_2)^2\ge 0\},
\]
which is the left part of $\RR^2$ divided by the red curve in Figure
\ref{fig::notpointed}. Then, $\cl{\co{S}}$ is the closed half
space of $\RR^2$ partitioned by the line $X_2=X_1$.

By Remark \ref{re::divide}, set $\mathcal{E}'=\{(0,0),(1,0),(1,1)\}$ and divide
$S'=\bigcup_{e\in\mathcal{E}'}S'_{e}$ defined as in $(\ref{eq::Se})$.  Clearly,
for any integer $k'\ge 1$,
we have
\[
\Omega_{k'}\left(\wt{G}'_{(1,0)}\right)=\{(x_1,x_2)\in\RR^2\mid  x_1\le 0,\
x_2\ge 0\}.
\]
The third order spectrahedral approximation $\Omega_{3}\left(\wt{G}'_{(0,0)}\right)$
of $S'_{(0,0)}$ is shown shaded in Figure \ref{fig::notpointed}. As we can see,
the support line $X_2=X_1$ is approximated by $X_2=aX_1$ with $a<1$. The same
thing happens in the third quadrant. Numerically, we deduce
$\cl{\co{\bigcup_{e\in\mathcal{E}'}\Omega_3(\wt{G}'_e)}}=\RR^2$ which contains
$\cl{\co{S}}$ strictly.  $\hfill\square$
\begin{figure}
\includegraphics[width=0.35\textwidth]{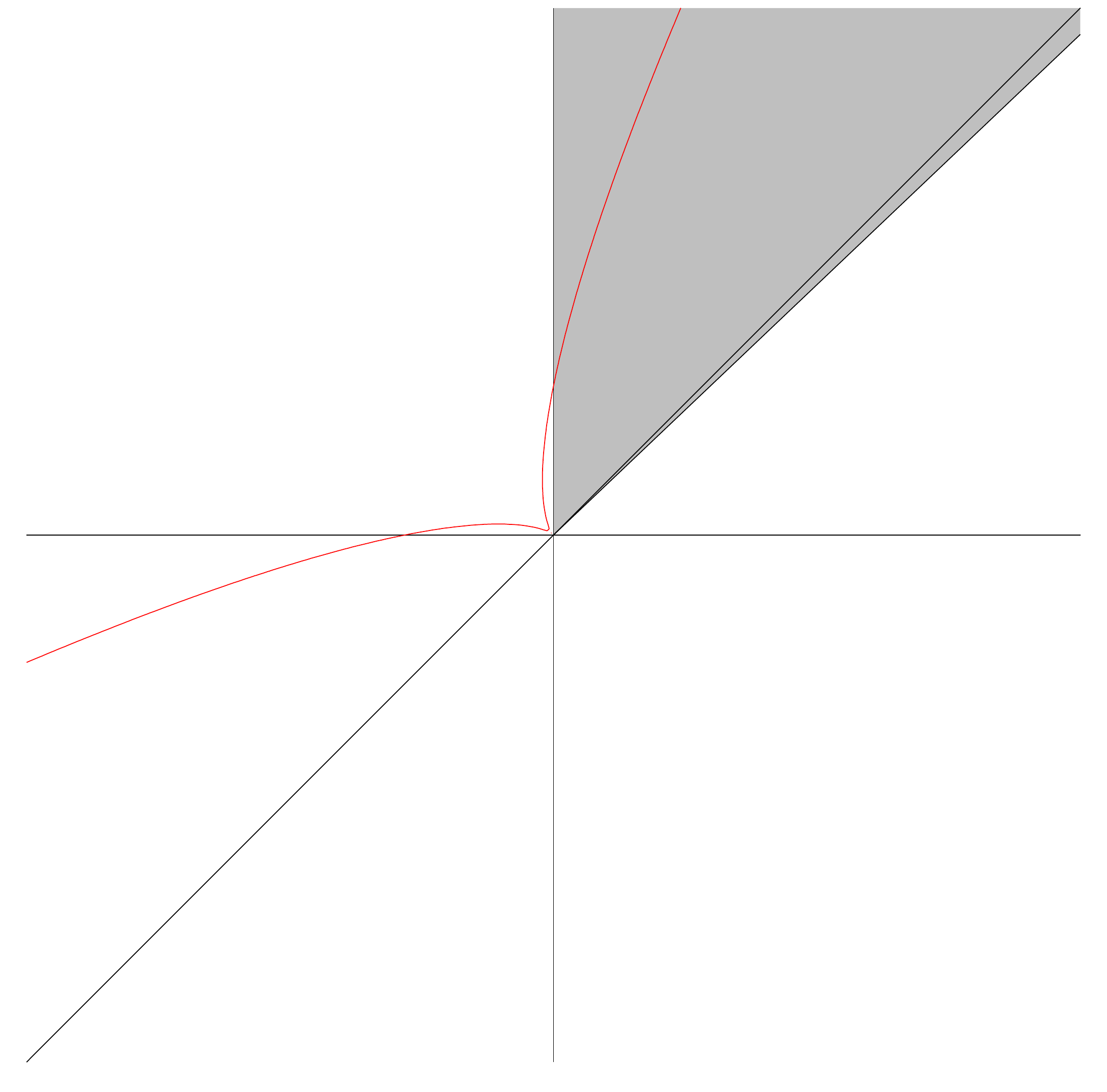}
\caption{The third order spectrahedral approximation
$\Omega_{3}\left(\wt{G}'_{(0,0)}\right)$ (shaded)
of $S'_{(0,0)}$ in Example \ref{ex::notpointed2}.}\label{fig::notpointed}
\end{figure}
\end{example}
Therefore, when  $\co{\cl{\wt{S}^{\text{o}}}}$ is not pointed, it becomes much more
complicate to approximate $\cl{\co{S}}$ properly  if PP-BDR property does not
hold on $\wt{S}_e$
for some $e \in \mathcal{E}$. We
leave this case for future investigations.

\vspace{15pt}
\noindent{\bfseries Acknowledgments:}
Feng Guo is supported by the Fundamental Research Funds for the Central
Universities. Lihong Zhi and Chu Wang  are  supported by NKBRPC 2011CB302400, the Chinese
    National Natural Science Foundation under Grants 91118001,
    60821002/F02.

\def\refname{\Large\bfseries References}
\bibliographystyle{plain}

\appendix
\section{}

In the following, we give the
proof of Proposition \ref{prop::pc}.

\begin{proof} [Proof of  Proposition \ref{prop::pc}]
We prove the properties are equivalent by showing the implications  \cite{CANO}
\[
(a) \Rightarrow (b) \Rightarrow (c) \Rightarrow (d) \Rightarrow (e)
\Rightarrow (f) \Rightarrow (a).
\]

\begin{itemize}
\item
(a)$\Rightarrow$(b).
Since $K$ is pointed, by (\ref{eq::conedual}), we have
\[
\cl{K^*-K^*}=(K\cap-K)^*={0}^*=\RR^n.
\]

\item
(b)$\Rightarrow$(c). It follows from
the fact that $K^*-K^*$ is a subspace in $\RR^n$ and therefore it
 is closed.
\item
(c)$\Rightarrow$(d).
Since $K^*$ is always nonempty, by \cite[Theorem 6.2]{convexanalysis}, it
 has nonempty relative
interior. By \cite[Theorem 2.7]{convexanalysis} and (c), we have
$\aff{K^*}=K^*-K^*=\RR^n$ and thus $K^*$ has
 nonempty interior.

\item
(d)$\Rightarrow$(e).
 Let $y$ be an interior of $K^*$, then $\langle
y,\ x\rangle>0$ for all $0\neq x\in K$. Let $\epsilon:=\min\{\langle
y,\ u\rangle\mid u\in K,\ \Vert u\Vert_2=1\}$, then $\epsilon>0$ and
$\langle y,x\rangle\geq \epsilon \|x\|$ for all
$x \in K$.

\item
(e)$\Rightarrow$(f).
 By (e), there
exist a vector $y$  and real $\epsilon>0$ satisfying $1/\Vert
y\Vert_2\le\Vert x\Vert_2\le 1/\epsilon$ for all $x\in  C:=\{x\in
K\mid\langle x,\ y\rangle=1\}$, thus $C$ is bounded and
$0\not\in\cl{C}$.  For every $0\neq u\in K$, we have $\langle u,\
y\rangle>0$ and $u/\langle u,\ y\rangle\in C$, i.e., $C$ is a
bounded
  base of $K$.

\item
(f)$\Rightarrow$(a).
 Suppose $C$ is a bounded base of $K$. If there exists $0\neq x\in K\cap -K$,
then $c_1x\in C$ and $-c_2x\in C$ for some $c_1, c_2 \in \RR_+$. Since
$C$ is a convex set, we have $0\in C$.  This  contradicts to the
assumption that $0\notin\cl{C}$.
\end{itemize}
\end{proof}

\end{document}